\numberwithin{equation}{section}
\renewcommand{\leq}{\leqslant}
\renewcommand{\geq}{\geqslant}
\def\stacksum#1#2{{\stackrel{{\scriptstyle #1}}
{{\scriptstyle #2}}}}
\newcommand{\Cc}{\mathbf{C}}
\newcommand{\Zz}{\mathbf{Z}}
\newcommand{\Bb}{\mathbf{B}}
\newcommand{\Uu}{\mathbf{U}}
\newcommand{\Gg}{\mathbf{G}}
\newcommand{\Tt}{\mathbf{T}}
\newcommand{\Fp}{\mathbf{F}}
\newcommand{\Ss}{\mathbf{S}}
\newcommand{\mods}[1]{\,(\mathrm{mod}\,{#1})}
\DeclareMathSymbol{\dgenb}{\mathord}{symbols}{"1F}
\DeclareMathSymbol{\dgena}{\mathord}{symbols}{"1E}
\DeclareMathSymbol{\dgenaa}{\mathord}{symbols}{"1C}
\DeclareMathSymbol{\dgenbb}{\mathord}{symbols}{"1D}
\newcommand{\ra}{\rightarrow}
\newcommand{\lra}{\longrightarrow}
\newcommand{\eps}{\varepsilon}
\renewcommand{\rho}{\varrho}
\newcommand{\demi}{{\textstyle{\frac{1}{2}}}}
\newcommand{\kesten}{{\textstyle{\frac{2}{\sqrt{3}}}}}
\newcommand{\girth}[1]{\mathrm{girth({#1})}}
\newcommand{\cayley}[2]{\mathcal{C}({#1},{#2})}
\newcommand{\vois}[2]{\mathcal{B}_{{#1}}({#2})}
\newcommand{\mdim}[1]{d({{#1}})}
\newcommand{\bbigf}{\mathcal{G}}
\newcommand{\rp}[1]{\mathrm{rp}({#1})}
\newcommand{\rpp}[1]{\mathrm{rp}^+({#1})}
\newcommand{\app}[1]{\mathsf{{#1}}}
\newcommand{\trp}[1]{\mathrm{trp}({#1})}
\newcommand{\nfold}[2]{{{#1}}^{{(#2)}}}
\newcommand{\rd}[1]{\ar@{{*}-{*}}[r]_-{{#1}}}
\newcommand{\lrd}[1]{\ar@{{*}-{*}}[rr]_-{{#1}}}
\newcommand{\crd}[1]{\ar @/_1pc/ @{{*}-{*}}[r]_-{{#1}}}
\newcommand{\lcrd}[1]{\ar @/_1pc/ @{{*}-{*}}[rr]_-{{#1}}}
\newcommand{\fb}[1]{\ar@{{*}->}[r]^-{{#1}}}
\newcommand{\lfb}[1]{\ar@{{*}->}[rr]^-{{#1}}}
\newcommand{\me}[1]{\ar@{{*}~{*}}[r]^-{{#1}}}
\newcommand{\sub}{\ \ar@{>->}[r]}
\newcommand{\uple}[1]{\text{\boldmath${#1}$}}
\newcommand{\lmap}[4]{\left\{\begin{array}{ccl}{{#1}}&\lra&{{#2}}\\{{#3}}&\mapsto&{{#4}}\end{array}\right.}
\DeclareMathSymbol{\gena}{\mathord}{letters}{"3C}
\DeclareMathSymbol{\genb}{\mathord}{letters}{"3E}
\DeclareMathOperator{\cl}{Cl}
\DeclareMathOperator{\bcl}{\mathbf{Cl}}
\DeclareMathOperator{\Tr}{Tr}
\DeclareMathOperator{\diam}{diam}
\DeclareMathOperator{\proba}{\mathbf{P}}
\DeclareMathOperator{\SL}{SL}
\DeclareMathOperator{\GL}{GL}
\DeclareMathOperator{\PSL}{PSL}
\theoremstyle{plain}
\newtheorem{theorem}{Theorem}[section]
\newtheorem{lemma}[theorem]{Lemma}
\newtheorem{corollary}[theorem]{Corollary}
\newtheorem{proposition}[theorem]{Proposition}
\theoremstyle{remark}
\newtheorem{remark}[theorem]{Remark}
\theoremstyle{definition}
\newtheorem{definition}[theorem]{Definition}
\newcounter{exercices}
\newcounter{etape}
\begin{document}

\title[Explicit expansion]{Explicit growth and expansion for $\SL_2$}
 
\author{Emmanuel  Kowalski}
\address{ETH Z\"urich -- D-MATH\\
  R\"amistrasse 101\\
  8092 Z\"urich\\
  Switzerland} 
\email{kowalski@math.ethz.ch}

\date{}

\subjclass[2010]{20F69, 05C50, 05C81} 

\keywords{Growth of finite groups, expander graphs, Cayley graphs,
  diameter, random walks on groups}

\begin{abstract}
We give explicit versions of Helfgott's Growth Theorem for $\SL_2$, as
well as of the Bourgain-Gamburd argument for expansion of Cayley
graphs modulo primes of  subgroups of $\SL_2(\Zz)$ which are
Zariski-dense in $\SL_2$. 
\end{abstract}

\maketitle

\tableofcontents

\section{Introduction}

Our main goal in this paper is to prove the following result, which is
an explicit version of a theorem of Bourgain and Gamburd~\cite{bg}:

\begin{theorem}\label{th-bg-bis}
  Let $S\subset \SL_2(\Zz)$ be a finite symmetric set such that the
  subgroup generated by $S$ is Zariski-dense in $\SL_2(\Zz)$. Let
  $\mathcal{P}$ be the set of primes such that $S_p=S\mods{p}$
  generates $\SL_2(\Fp_p)$, which contains all but finitely many
  primes. Then the family of Cayley graphs
  $(\cayley{\SL_2(\Fp_p)}{S_p})_{p\in\mathcal{P}}$ is an expander
  family, and one can write down explicit bounds for the spectral gap,
  given the set $S$.
\par
In particular, if $S$ generates a free group of rank $|S|/2$, the
spectral gap\footnote{\ This is the spectral gap of the
  \emph{normalized} Laplace operator $\Delta=\mathrm{Id}-M$, where $M$
  is the Markov averaging operator of the graph; thus the spectrum of
  $\Delta$ is a subset of the interval $[0,2]$.} satisfies
\begin{equation}\label{eq-stated-sg}
  \lambda_1(\cayley{\SL_2(\Fp_p)}{S_p})\geq
  2^{-2^{35}\gamma^{-1}}
\end{equation}
for all $p$ large enough, where
$$
\gamma=\frac{\log(\kesten\sqrt{ |S|})}{\log \max_{s\in S}{\|s\|}},
$$
the norm $\|s\|$ being the operator norm of the matrix $s$, with
respect to the euclidean metric on $\Cc^2$. 
\end{theorem}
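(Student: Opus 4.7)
The proof follows the Bourgain--Gamburd strategy, rendered entirely explicit. Write $G_p=\SL_2(\Fp_p)$ and let $\mu_p$ be the uniform probability measure on $S_p$. The plan is to produce an explicit $k=O(\log p)$ and an explicit $\eta>0$ for which
\[
\bigl\|\mu_p^{(2k)}-u_{G_p}\bigr\|_2 \leq |G_p|^{-\eta},
\]
where $u_{G_p}$ is the uniform measure on $G_p$; combined with the quasi-randomness of $G_p$ --- every non-trivial irreducible representation of $\SL_2(\Fp_p)$ has dimension $\geq (p-1)/2$ --- this yields the lower bound on $\lambda_1$ via the standard Sarnak--Xue argument. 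Three quantitative ingredients enter: an initial $\ell^2$-decay coming from the random walk on the free group, an iterated $\ell^2$-flattening powered by Helfgott's product theorem, and non-concentration on proper subgroups.

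First, the freeness of $\langle S\rangle$ together with a girth estimate --- the reduction map $F_{|S|/2}\to G_p$ is injective on words of length at most $k_0\sim\log p/\log\max_s\|s\|$, by bounding matrix entries --- allows Kesten's spectral-radius bound for the simple random walk on the free group of rank $|S|/2$ to be transferred to $G_p$. This gives $\|\mu_p^{(k_0)}\|_2 \leq (\kesten\sqrt{|S|})^{-k_0}$ and hence $\|\mu_p^{(k_0)}\|_2 \leq p^{-c\gamma}$ for an explicit absolute constant $c$; the definition of $\gamma$ is engineered precisely to make it the ratio of the Kesten decay exponent to the girth exponent. Next, I would iterate the standard flattening dichotomy: either $\|\mu_p^{(2k)}\|_2 \leq |G_p|^{-\delta}\|\mu_p^{(k)}\|_2$ (genuine $\ell^2$-flattening by a fixed factor), or the level sets of $\mu_p^{(k)}$ form an approximate subgroup via Balog--Szemer\'edi--Gowers, to which the explicit Helfgott growth theorem established earlier in the paper applies. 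Helfgott forces such an approximate subgroup either to exhaust $G_p$ (in which case we are done), to be essentially trivial (excluded by the initial decay), or to sit inside a proper subgroup; the last alternative is ruled out by non-concentration, itself a quantitative consequence of Zariski-density via an escape-from-subvariety estimate, again transferred to $G_p$ through the girth bound. Iterating $O(1/\delta)$ times brings $\|\mu_p^{(k)}\|_2$ down to $|G_p|^{-1/2+\eta}$ at $k=O(\log p)$, after which quasi-randomness closes the argument.

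The main obstacle is the explicit bookkeeping: the final spectral gap is a multiplicative combination of the three explicit exponents --- from Helfgott's product theorem, from the non-concentration bound, and from the initial Kesten/girth step --- and tracking every absolute constant through the $O(1/\delta)$ flattening iterations, together with the passage from $\ell^2$-closeness to uniform back to the spectral gap via quasi-randomness, demands care. The shape $2^{-2^{35}\gamma^{-1}}$ in~\eqref{eq-stated-sg} reflects exactly this accumulation: the factor $2^{35}$ absorbs all absolute constants independent of $S$, while $\gamma^{-1}$ registers the entry-point of Kesten's bound into the iteration, since each of the roughly $\gamma^{-1}$-many "units'' of initial decay provided by Kesten is amplified, but also diluted, by a fixed constant factor as it propagates through Helfgott's growth law.
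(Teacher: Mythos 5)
Your overall architecture coincides with the paper's: Kesten's spectral-radius bound transferred through the girth estimate for the initial $\ell^2$-decay, iterated $L^2$-flattening driven by Balog--Szemer\'edi--Gowers (in the form of Theorem~\ref{th-large-energy}) together with the explicit Helfgott growth theorem, non-concentration on proper subgroups to close the dichotomy, and quasirandomness ($\mdim{\SL_2(\Fp_p)}=(p-1)/2$) to convert the final $\ell^2$-bound into a spectral gap via the Sarnak--Xue counting inequality~(\ref{eq-spectral}). The role you assign to $\gamma$ and the restriction to the free case for the explicit bound also match the paper.

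The one step where your sketch has a genuine gap is the non-concentration estimate $\proba(X_k\in xH)\leq|G_p|^{-\gamma}$ for \emph{all} proper subgroups $H\subset\SL_2(\Fp_p)$. You attribute this to ``Zariski-density via an escape-from-subvariety estimate''; but escape from subvarieties (Lemma~\ref{lm-escape}) is an ingredient of the growth theorem and controls only proper \emph{algebraic} subgroups, while Zariski-density of $\langle S\rangle$ by itself yields no quantitative bound at the scale $|G_p|^{-\gamma}$ for arbitrary proper subgroups of the finite group. What is actually needed, and what the paper supplies, is (i) Dickson's classification: for $p\geq 5$ every proper subgroup of $\SL_2(\Fp_p)$ either has order at most $120$ or satisfies the law $[[x_1,x_2],[x_3,x_4]]=1$; and (ii) a counting lemma in the free group (Proposition~\ref{pr-ad-hoc}) showing that the set of elements of a ball of radius $m$ satisfying this law has at most $45m^2$ elements. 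Lifting $H$ to the free group through the girth bound and combining this polynomial count with the pointwise Kesten decay then gives Corollary~\ref{cor-decay-2}. Without the subgroup classification and the free-group counting argument (or an equivalent substitute), the non-concentration hypothesis feeding the flattening iteration is not established, and the dichotomy you describe cannot be closed.
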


We can specify what ``$p$ large enough'' means, but we defer a
statement to Section~\ref{ssec-summary} since this involves a series
of inequalities which are awkward to state (and unenlightening), but
easy to check for a given concrete set of matrices $S$.
\par
A crucial ingredient in the argument of Bourgain and Gamburd is
Helfgott's Growth Theorem~\cite{helfgott} for $\SL_2$, which has
considerable independent interest. We thus require an explicit version
of it, and we will prove the following:

\begin{theorem}\label{th-helfgott}
  Let $p$ be a prime number, $H\subset \SL_2(\Fp_p)$ a symmetric
  generating subset of $\SL_2(\Fp_p)$ containing $1$. Then the triple
  product set $\nfold{H}{3}=H\cdot H\cdot H$ satisfies either
  $\nfold{H}{3}=\SL_2(\Fp_p)$ or
$$
|\nfold{H}{3}|\geq |H|^{1+\delta},
$$
where $\delta=1/3024$.
\end{theorem}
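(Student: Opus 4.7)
The plan is to follow the scheme of Helfgott~\cite{helfgott}, keeping every constant explicit. Let $G=\SL_2(\Fp_p)$, of order $p(p^2-1)$. The argument is a trichotomy on the size of $H$: if $|H|$ is very close to $|G|$, direct counting combined with the quasirandomness of $G$ (no nontrivial representation of small degree) gives $\nfold{H}{3}=G$; if $|H|$ is bounded, a short escape-from-subvarieties argument gives $|\nfold{H}{3}|\geq |H|^{1+\delta}$ immediately; the nontrivial regime is the intermediate one. There I would argue by contradiction: assume $\nfold{H}{3}\neq G$ and $|\nfold{H}{3}|<|H|^{1+\delta}$. The non-commutative Pl\"unnecke--Ruzsa inequality (Tao's tripling version) then yields $|\nfold{H}{k}|\leq |H|^{1+C_k\delta}$ for every bounded $k$, so that bounded products of $H$ may be used freely in place of $H$ with only polynomial loss in the exponent.

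The core of the proof extracts growth inside a maximal torus of $G$ and then feeds it into sum-product. Escape from the codimension-one subvariety of trace-$\pm 2$ elements produces, in a bounded number of multiplications, a regular semisimple element $g\in \nfold{H}{O(1)}$, whose centraliser $T$ is a torus of order $p\pm 1$, isomorphic either to $\Fp_p^\times$ (split case) or to the norm-one subgroup of $\Fp_{p^2}^\times$ (non-split). Counting the conjugates $h_1 g h_2^{-1}$ for $h_1,h_2\in \nfold{H}{O(1)}$ and pigeonholing over the $O(p^2)$ conjugates of $T$ in $G$ yields an explicit torus $T_0$ with $|A\cap T_0|\geq |H|^{\alpha}$, where $A=\nfold{H}{O(1)}$ and $\alpha$ is explicit in $\delta$. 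Identify $B:=A\cap T_0$ with a subset of $\Fp_p$ via the eigenvalue (or norm) parametrisation; any element $h\in A$ that does not normalise $T_0$ acts on this parametrisation by an affine map transverse to multiplication, so products of conjugates of $B$ by a bounded number of such $h$ realise both additive and multiplicative combinations inside $\nfold{H}{O(1)}$. The explicit Bourgain--Glibichuk--Konyagin sum-product estimate in $\Fp_p$, with exponent $\eta>0$, then forces $|\nfold{H}{O(1)}|$ past the Pl\"unnecke--Ruzsa bound, contradicting the tripling hypothesis.

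The main obstacle is producing the precise constant $\delta=1/3024$. Four ingredients each constrain $\delta$: (i) the number of multiplications required to escape from the trace-$\pm 2$ subvariety, controlled by the degree of that subvariety; (ii) the Pl\"unnecke--Ruzsa amplification factor $C_k$, which multiplies $\delta$ in the exponent each time a product $\nfold{H}{k}$ is invoked; (iii) the explicit sum-product exponent $\eta$ available in $\Fp_p$; and (iv) the pigeonholing exponent $\alpha$ governing the transfer from $|H|$ to $|A\cap T_0|$. My strategy would be to carry $\delta$ as a free parameter throughout, write the finitely many explicit inequalities it must satisfy for the contradiction to close, and solve at the end for the largest admissible value. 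The denominator $3024$ will then emerge as a product of small integer factors coming from the sum-product exponent, the length of the chain of reductions, and the Ruzsa constants, rather than from anything structurally significant.
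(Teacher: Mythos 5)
Your outline is a faithful reconstruction of Helfgott's \emph{original} argument (escape, concentration in a torus, parametrisation of the torus by $\Fp_p^{\times}$ or the norm-one subgroup of $\Fp_{p^2}^{\times}$, then the sum--product theorem), and that route does prove a growth theorem with \emph{some} explicit $\delta>0$. But it is genuinely different from the proof in the paper, which follows the Pyber--Szab\'o/Breuillard--Green--Tao scheme and uses no sum--product estimate at all. There the work is done by: (a) a Larsen--Pink type non-concentration inequality $|\bcl(g)\cap H|\ll \trp{H}^{2/3}|H|^{2/3}$ for a regular semisimple conjugacy class, proved by counting fibers of $(x_1,x_2,x_3)\mapsto(x_1x_2,x_1x_3)$ and analysing when $C\cap y_1C\cap y_2C$ is infinite; (b) the resulting ``involving dichotomy'': every torus involved with $H$ meets $\nfold{H}{2}$ in $\gg \trp{H}^{-4}|H|^{1/3}$ regular elements, unless $\trp{H}\geq|H|^{1/168}$; and (c) a case split on whether \emph{all} conjugates of an involved torus are involved (forcing $|H|$ close to $|G|$, handled by Gowers--Nikolov--Pyber quasirandomness) or some conjugate is not (handled by the Glibichuk--Konyagin pivoting trick together with the coset-counting Lemma~\ref{lm-intersection}). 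Note that the paper borrows from Glibichuk--Konyagin only the pivoting \emph{idea}, not their sum--product theorem. The payoff of avoiding sum--product is precisely a much better, cleanly computable exponent.

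The genuine gap in your proposal is the constant. The theorem asserts $\delta=1/3024$, and ``carry $\delta$ as a free parameter and solve at the end for the largest admissible value'' does not establish that this largest value is at least $1/3024$; you would have to exhibit the explicit inequalities and verify them, and on your route this is very unlikely to work out. The explicit sum--product exponents $\eta$ available in $\Fp_p$ are small, and each of your reductions (escape, Pl\"unnecke--Ruzsa amplification through $\nfold{H}{O(1)}$, pigeonholing onto a single torus, converting torus growth back into growth of $H$) multiplies the final exponent by a further small constant, so the denominator produced by the sum--product route is expected to be far larger than $3024$. In the paper the value $3024$ is traceable to a short explicit chain ($1/168$ from the dichotomy, a factor $9$ from Ruzsa's inequality applied to $L=\nfold{H}{4}$, and a factor $2$ absorbing the small-$|H|$ case), none of which passes through a sum--product exponent. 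Separately, several steps of your sketch are asserted rather than proved (the transversality of the action of a non-normalising element, the pigeonholing exponent $\alpha$), so even granting the route, the proposal is a plan rather than a proof.
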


Here is a simple corollary, which is (as far as the author is aware)
also the first explicit result of this kind for almost simple linear
groups:

\begin{corollary}[Explicit solution to Babai's conjecture for
  $\SL_2(\Fp_p)$]\label{cor-babai}
For any prime number $p$ and any symmetric generating set
  $S$ of $\SL_2(\Fp_p)$, we have
$$
\diam \cayley{\SL_2(\Fp_p)}{S}\leq 3(\log |\SL_2(\Fp_p)|)^{C}
$$
with $C=3323$.
\end{corollary}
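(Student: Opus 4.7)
The plan is to apply Helfgott's growth theorem (Theorem~\ref{th-helfgott}) iteratively. We may first adjoin $1$ to $S$ without changing the Cayley graph diameter. Because $\SL_2(\Fp_p)$ is non-abelian of order at least $6$, no symmetric set containing $1$ of size $\leq 2$ can generate it (such a set would be $\{1,g\}$ with $g^2=1$, generating a group of order $2$); hence $|S|\geq 3$.

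Set $H_k := S^{(3^k)}$ for $k\geq 0$. Each $H_k$ is symmetric, contains $1$, and generates $\SL_2(\Fp_p)$, so Theorem~\ref{th-helfgott} yields the dichotomy: either $H_{k+1}=H_k^{(3)}$ equals $\SL_2(\Fp_p)$, in which case the diameter is at most $3^{k+1}$, or $|H_{k+1}|\geq |H_k|^{1+\delta}$. As long as the process has not terminated, iteration gives
$$
|H_k| \geq |S|^{(1+\delta)^k} \geq 3^{(1+\delta)^k}.
$$
Writing $N=|\SL_2(\Fp_p)|$, the process is forced to terminate at the first integer $k^*$ with $(1+\delta)^{k^*}\geq \log N/\log 3$, and then
$$
\diam \cayley{\SL_2(\Fp_p)}{S} \leq 3^{k^*} \leq 3\Bigl(\frac{\log N}{\log 3}\Bigr)^{a},\qquad a := \frac{\log 3}{\log(1+\delta)}.
$$

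It remains to verify that this is bounded by $3(\log N)^{3323}$. With $\delta = 1/3024$, the Taylor estimate $\log(1+\delta)>\delta-\delta^2/2$ gives $a<3024\log 3 \cdot (1-1/6048)^{-1}<3323$. Since $\log 3>1$ and $\log N\geq \log 6>1$, we have both $(\log 3)^{-a}\leq 1$ and $(\log N)^{a}\leq(\log N)^{3323}$, which combine to give the stated bound. The only mildly delicate choice is to parametrize the recursion in the base-$3$ logarithm (exploiting $|S|\geq 3$); parametrizing in base $2$ would introduce an astronomically large prefactor $(\log 2)^{-a}$ that would be incompatible with $C=3323$. Beyond this bookkeeping, the argument is essentially immediate from Helfgott's theorem.
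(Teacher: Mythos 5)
Your proof is correct and is essentially the paper's argument: iterate Helfgott's Theorem on the sets $\nfold{(S\cup\{1\})}{3^k}$ until the growth alternative forces the product set to be all of $\SL_2(\Fp_p)$, then bound $3^{k^*}$ by $3(\log|G|)^{(\log 3)/\log(1+\delta)}$. Your explicit observation that $|S\cup\{1\}|\geq 3$ (so that the iteration can be run in base $3$ and the second alternative genuinely becomes impossible at $k^*$) is a detail the paper's terser write-up leaves implicit, and you verify the numerical bound $(\log 3)/\log(1+1/3024)\leq 3323$ correctly.
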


Another corollary of Helfgott's Theorem and of intermediate results
used in the proof of Theorem~\ref{th-bg-bis} is a better diameter
bound for Zariski-dense subgroups:

\begin{corollary}[Diameter bounds for Zariski-dense subgroups of $\SL_2$]\label{cor-diameter}
  Let $S\subset \SL_2(\Zz)$ be a finite symmetric set such that the
  subgroup generated by $S$ is Zariski-dense in $\SL_2(\Zz)$ and is a
  free group of rank $|S|/2$. Let $\mathcal{P}$ be the set of primes
  such that $S_p=S\mods{p}$ generates $\SL_2(\Fp_p)$. 
\par
Let $\delta>0$ be as in Helfgott's Theorem and define
$$
\tau^{-1}=\log \max_{s\in S}{\|s\|}>0.
$$
\par
Then for $p\in\mathcal{P}$ and $p>\exp(2/\tau)$, we have
$$
\diam(\cayley{\SL_2(\Fp_p)}{S})\leq 3^A(\log |\SL_2(\Fp_p)|)
$$
where
$$
A=\frac{\log (8\tau^{-1}(|S|-1)^{-1})}{\log(1+\delta)}.
$$
\end{corollary}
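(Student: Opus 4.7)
The plan is to couple a base-case size estimate coming from the free-group hypothesis with an iteration of Helfgott's Growth Theorem (Theorem \ref{th-helfgott}). Let $S_1 = S \cup \{1\} \subset \SL_2(\Fp_p)$ and let $B_n = S_1^n$ denote the ball of radius $n$ in $\cayley{\SL_2(\Fp_p)}{S}$; the diameter is the smallest $N$ with $B_N = \SL_2(\Fp_p)$. For every $n \geq 1$ the set $B_n$ is symmetric, contains $1$, and (since $p\in\mathcal{P}$) generates $\SL_2(\Fp_p)$; thus Theorem \ref{th-helfgott} yields the dichotomy
$$
B_n^3 = \SL_2(\Fp_p) \qquad \text{or} \qquad |B_{3n}| = |B_n^3| \geq |B_n|^{1+\delta}.
$$

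First I would choose $n$ so that the reduction $\SL_2(\Zz)\to \SL_2(\Fp_p)$ is injective on words of length at most $n$ in the free group generated by $S$. If $g\neq g'$ are two such words, then $g-g'$ is a nonzero integer matrix with entries of absolute value at most $2M^n$, where $M = \max_{s\in S}\|s\| = e^{1/\tau}$, so injectivity holds whenever $2M^n < p$, i.e.\ $n \leq \tau\log(p/2)$. The hypothesis $p > \exp(2/\tau)$ guarantees that a positive integer $n$ of size roughly $\tau(\log p)/2$ works. Since $S$ freely generates a free group of rank $|S|/2$, the number of reduced words of length exactly $k\geq 1$ equals $|S|(|S|-1)^{k-1}$, and injectivity then yields $|B_n| \geq (|S|-1)^n$ inside $\SL_2(\Fp_p)$.

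Next I iterate the dichotomy. Setting $H_k = B_{3^k n}$, either $H_K = \SL_2(\Fp_p)$ for some $K$ (whence $\diam \leq 3^K n$), or $|H_k| \geq (|S|-1)^{n(1+\delta)^k}$ for all $k \geq 0$. The second option must fail at the smallest integer $K$ with $n(1+\delta)^K \log(|S|-1) \geq \log|\SL_2(\Fp_p)|$, giving
$$
(1+\delta)^K \leq (1+\delta) \cdot \frac{\log|\SL_2(\Fp_p)|}{n \log(|S|-1)}.
$$
Substituting the chosen $n$ (of size comparable to $\tau\log p$, hence to $\tau\log|\SL_2(\Fp_p)|$) and rearranging, one obtains $K \leq A$, and the diameter bound $\diam \leq 3^K n \leq 3^A \log|\SL_2(\Fp_p)|$ follows.

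The conceptual content is entirely contained in the injectivity observation for short words in the free group and in the repeated application of Theorem \ref{th-helfgott}. The main obstacle will be purely numerical: choosing $n$ and handling the various floor functions, factors of $\log 2$, and the ratio between $\log p$ and $\log|\SL_2(\Fp_p)|$ carefully enough to recover the explicit constant $A = \log(8\tau^{-1}(|S|-1)^{-1})/\log(1+\delta)$ as stated.
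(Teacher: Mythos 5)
Your argument is correct and is essentially the paper's own proof: the base case $|B_n|\geq (|S|-1)^n$ for $n\approx\tau\log(p/2)$ via injectivity of reduction on short words in the free group (Proposition~\ref{pr-girth}), followed by iterating the dichotomy of Theorem~\ref{th-helfgott} on the balls $B_{3^k n}$ until the growth alternative is untenable. The remaining work is exactly the numerical bookkeeping you flag at the end (the paper turns $(p/2)\geq|\SL_2(\Fp_p)|^{1/4}$ and the floor into $|B_r|\geq|\SL_2(\Fp_p)|^{\tau\log(|S|-1)/8}$, whence the $8\tau^{-1}$ in $A$).
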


\begin{remark}
  Using the well-known bound
$$
\lambda_1(\Gamma_p)\geq \frac{1}{|S|\diam(\Gamma_p)^2}
$$
(see, e.g., ~\cite[Th. 13.23]{lpw}), these diameter bounds can be used
to get lower bounds for spectral gaps for ``medium'' primes. Note the
huge discrepancy however at the end of the range.
\end{remark}

Combining Theorem~\ref{th-bg-bis} with the second corollary, we can
give explicit statements for the motivating example of the Lubotzky
group.

\begin{corollary}[The Lubotzky group]\label{cor-lubotzky}
  Let
$$
S=\Bigl\{\begin{pmatrix}1&\pm 3\\0&1
\end{pmatrix},
\begin{pmatrix}1&0\\\pm 3&1
\end{pmatrix}
\Bigr\}\subset \SL_2(\Zz),
$$
and let $\Gamma_p=\cayley{\SL_2(\Fp_p)}{S_p}$.  Then we have
\begin{equation}\label{eq-sg-lubotzky}
\lambda_1(\Gamma_p)\geq 2^{-2^{36}}
\end{equation}
if $p\geq 2^{2^{46}}$, and 
$$
\diam(\Gamma_p)\leq 2^{5572}(\log |\SL_2(\Fp_p)|)
$$
for all $p\not=3$. 
\end{corollary}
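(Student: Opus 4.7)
The plan is a direct substitution: the two bounds are respectively specializations of Theorem~\ref{th-bg-bis} and Corollary~\ref{cor-diameter} to the explicit Lubotzky set, and the proof reduces to verifying the hypotheses and then doing numerical bookkeeping on the resulting constants.

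First I would verify the hypotheses common to both applications. The set $S$ is symmetric with $|S|=4$; by a classical ping-pong argument on $\mathbf{R}^{2}$, the matrices $\begin{pmatrix}1&3\\0&1\end{pmatrix}$ and $\begin{pmatrix}1&0\\3&1\end{pmatrix}$ freely generate a subgroup of $\SL_{2}(\Zz)$ of rank $|S|/2=2$, which is therefore Zariski-dense in $\SL_{2}$ (being non-virtually-abelian). For $p\neq 3$ the integer $3$ is invertible modulo $p$, so powers of the reductions of the two generators exhaust the upper- and lower-triangular unipotent subgroups of $\SL_{2}(\Fp_{p})$, and hence $S_{p}$ generates $\SL_{2}(\Fp_{p})$; for $p=3$ the reductions are trivial, which accounts for the exclusion of that prime in the diameter statement.

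Next I would compute the only numerical input, $\max_{s\in S}\|s\|$. By symmetry all four elements have the same operator norm, and the singular values of $s=\begin{pmatrix}1&3\\0&1\end{pmatrix}$ are the square roots of the eigenvalues of $s^{T}s=\begin{pmatrix}1&3\\3&10\end{pmatrix}$, so $\|s\|^{2}=(11+3\sqrt{13})/2<11$ and $\log\|s\|<1.2$. The quantity in Theorem~\ref{th-bg-bis} then satisfies
$$
\gamma^{-1}=\frac{\log\|s\|}{\log(4/\sqrt{3})}<\frac{1.2}{0.83}<2,
$$
so $2^{35}\gamma^{-1}\leq 2^{36}$ and \eqref{eq-sg-lubotzky} follows from \eqref{eq-stated-sg}; the threshold $p\geq 2^{2^{46}}$ is just the specialization to this $S$ of the concrete ``$p$ large enough'' condition deferred to Section~\ref{ssec-summary}. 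For the diameter bound I would apply Corollary~\ref{cor-diameter}: since $\exp(2/\tau)=\|s\|^{2}<11$, the corollary covers every prime $p\geq 11$, while the remaining primes $p\in\{2,5,7\}$ are dispatched by the trivial bound $\diam(\Gamma_{p})\leq|\SL_{2}(\Fp_{p})|$, which sits comfortably inside $2^{5572}\log|\SL_{2}(\Fp_{p})|$. The exponent produced by the corollary is
$$
A=\frac{\log(8\tau^{-1}/3)}{\log(1+1/3024)}\leq\frac{\log(3.2)}{\log(1+1/3024)}\leq 3506,
$$
so $3^{A}\leq 2^{A\log_{2}3}\leq 2^{5557}\leq 2^{5572}$.

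The argument is therefore mechanical once the two master theorems are in hand. The only step that deserves attention is the verification that the numerical slack is sufficient, namely $\gamma^{-1}\leq 2$ in the spectral-gap estimate and $A\log_{2}3\leq 5572$ in the diameter estimate. Both hold with a non-trivial but not extravagant margin, which is presumably the reason the exponents $2^{36}$ and $5572$ were chosen in the statement rather than smaller round numbers.
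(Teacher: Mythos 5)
Your proposal is correct and is essentially the paper's own (unwritten) proof: the corollary is obtained by specializing Theorem~\ref{th-bg-bis} and Corollary~\ref{cor-diameter} to this $S$, using $\|s\|^2=(11+3\sqrt{13})/2<11$ to get $\gamma^{-1}<2$ (hence $2^{35}\gamma^{-1}\leq 2^{36}$) and $A\log_2 3\leq 5572$, with the small primes handled trivially. The one step you wave at rather than carry out --- checking that $p\geq 2^{2^{46}}$ satisfies the explicit list of ``large enough'' inequalities in Section~\ref{ssec-summary} --- is exactly what the paper delegates to the \textsc{Pari/GP} script, so this matches the intended argument.
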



The original papers of Bourgain and Gamburd~\cite{bg} and
Helfgott~\cite{helfgott} are effective, and thus it is not surprising
that one can obtain explicit versions. What is less clear is how good
the constants may be, and how much work may be required to provide
them. This paper gives a first indication in that respect. 
\par
The bounds we derive are very unlikely to be anywhere near sharp, and
not only because we often use rather rough estimates to simplify the
shape and constants appearing in various inequalities.\footnote{\ In
  some cases, one can easily extract better bounds from the proof,
  e.g., one can replace $1/3024$ by $1/1513$ for all $H$ large enough
  in Theorem~\ref{th-helfgott}.}  Indeed, when the Hausdorff dimension
of the limit set of the subgroup $G$ generated by $S$ is large enough,
Gamburd~\cite{gamburd} has shown quite good spectral gaps for the
hyperbolic Laplace operator on $G\backslash \mathbf{H}$, which
strongly suggest that the corresponding combinatorial spectral gap
would be also relatively large. But this computation has not been
done, to the author's knowledge, and our version of
Theorem~\ref{th-bg-bis} gives the first fully explicit spectral gap
for infinite-index subgroups of $\SL_2(\Zz)$, with
Corollary~\ref{cor-lubotzky} being a nice concrete example (it is also
known that the ``Lubotzky group'' is too small for Gamburd's result to
apply).
\par
In view of the direct link between the spectral gap of families of
Cayley graphs of quotients of ``thin'' (or sparse) subgroups of
arithmetic groups and quantitative applications of sieve methods to
these groups, it is natural to wish for a better understanding of
these issues.\footnote{\ Indeed, this question was asked by J-P. Serre
  during the author's Bourbaki lecture~\cite{bourbaki}.} A first step
towards effective versions of these applications of ``sieve in orbit''
would be to extend Theorem~\ref{th-bg-bis} to an effective spectral
gap for $\SL_2(\Zz/q\Zz)$, where $q$ is a squarefree modulus (as
originally proved by Bourgain, Gamburd and Sarnak~\cite{bgs}), and we
hope to come back to this.
\par
As a final remark, the reader can also see this paper as presenting a
complete proof of the qualitative forms of Theorems~\ref{th-bg-bis}
and~\ref{th-helfgott} and their corollaries. When read in this light,
ignoring the fussy technical details arising from trying to have
explicit bounds, it may in fact be useful as a self-contained
introduction to this area of research.
\par
\medskip 
\par
\textbf{Notation.} As usual, $|X|$ denotes the cardinality of a set.
Given a group $G$, and a symmetric generating set $S$, we denote by
$\cayley{G}{S}$ the Cayley graph of $G$ with respect to $S$, which is
$|S|$-regular. Moreover, we say that a symmetric set $S\subset G$
\emph{freely generates} $G$ if representatives of $S$ modulo the
relation $s\sim s^{-1}$ form a free generating set of $G$, i.e., $G$
is a free group of rank $|S|/2$.
\par
For a subset $H\subset G$ of a group $G$, we write $\nfold{H}{n}$ for
the $n$-fold product set
$$
\nfold{H}{n}=\{x\in G\,\mid\, x=h_1\cdots h_n\text{ for some } h_i\in
H\}.
$$
\par
Note the immediate relations
$$
\nfold{(\nfold{H}{n})}{m}=\nfold{H}{nm},\quad
\nfold{H}{n+m}=\nfold{H}{n}\cdot \nfold{H}{m}
$$
for $n$, $m\geq 0$ and $(\nfold{H}{n})^{-1}=\nfold{H}{n}$ if $H$ is
symmetric. In addition, if $1\in H$, we have $\nfold{H}{n}\subset
\nfold{H}{m}$ for all $m\geq n$. In particular, the diameter of a
Cayley graph $\cayley{G}{H}$, when $H=H^{-1}$, is the smallest $n\geq
1$ such that $\nfold{\tilde{H}}{n}=G$, where $\tilde{H}=H\cup \{1\}$.
\par
We denote by $\trp{H}$ the ``tripling constant'' of a subset $H\subset
G$, defined by
$$
\trp{H}=\frac{|\nfold{H}{3}|}{|H|}.
$$
\par
\medskip 
\par
\textbf{Acknowledgments.} Much of the work on this paper was done
during and following a course on expander graphs that I taught at ETH
Zürich during the Fall Semester 2011.\footnote{\ Lecture notes for
  this course are available, and contain more background and
  motivating material~\cite{expanders}.} Thanks to all who attended
this course and helped with corrections and remarks. Thanks in
particular to O. Dinai, and to R. Pink for very interesting
discussions and for helping with the proof of the specific variant of
a ``Larsen-Pink'' inequality in Theorem~\ref{th-lp}. Thanks very much
to P. Sarnak for his comments, and especially for his insights
concerning the history of questions and results concerning spectral
gaps for subgroups of $\SL_2(\Zz)$. Thanks to L. Pyber for clarifying
some of the ``combinatorics'' in the proof of the growth theorem.
\par
Finally, I owe a great debt to the referee for his or her extremely
detailed reading of the paper, which found many computational mistakes
in the first draft. In a paper of this kind, this makes of course an
enormous difference.


\section{Explicit multiplicative combinatorics}

Another ingredient of Theorem~\ref{th-bg-bis} is the relation between
subsets of a finite group with small ``multiplicative energy'' and
sets with small tripling constant, or approximate subgroups. This was
obtained by Tao~\cite{tao}, in good qualitative form, but without
explicit dependency of the various quantities involved. In this
section, we state a suitably explicit version.
\par
We recall first the definitions involved. For a finite group $G$ and
$A$, $B\subset G$, one defines the multiplicative energy by
$$
E(A,B)=|\{(a_1,a_2,b_1,b_2)\in A^2\times B^2\,\mid\, a_1b_1=a_2b_2\}|.
$$
\par
It is also convenient to denote by
$$
e(A,B)=\frac{|E(A,B)|}{(|A||B|)^{3/2}}.
$$
the normalized multiplicative energy, which is $\leq 1$.  Following
Tao (see~\cite[Def. 3.8]{tao}), for a finite group $G$ and any
$\alpha\geq 1$, a subset $\app{H}\subset G$ is an $\alpha$-approximate
subgroup if $1\in \app{H}$, $\app{H}=\app{H}^{-1}$ and there exists a
symmetric subset $X\subset \nfold{H}{2}$ of order at most $\alpha$
such that
\begin{equation}\label{eq-def-app}
  \app{H}\cdot \app{H}\subset X\cdot \app{H},
\end{equation}
which implies also $\app{H}\cdot \app{H}\subset \app{H}\cdot X$. Then
we have:

\begin{theorem}\label{th-large-energy}
  Let $G$ be a finite group and $\alpha\geq 1$. If $A$ and $B$ are
  subsets of $G$ such that $e(A,B)\geq \alpha^{-1}$, there exist
  constants $\beta_1$, $\beta_2$, $\beta_3$, $\beta_4\geq 1$, a
  $\beta_1$-approximate subgroup $\app{H}\subset G$ and elements $x$,
  $y\in G$ such that
\begin{gather*}
  |\app{H}|\leq \beta_2|A|\leq \beta_2\alpha^2|B|,\\
  |A\cap x\app{H}|\geq \frac{1}{\beta_3}|A|,\quad\quad |B\cap
  \app{H}y|\geq \frac{1}{\beta_3}|B|,\\
  \trp{\app{H}}\leq \beta_4,
\end{gather*}
and moreover $\beta_i\leq c_1\alpha^{c_2}$ for some \emph{absolute}
constants $c_1$, $c_2>0$. In fact, one can take
\begin{equation}\label{eq-explicit-mc}
  \beta_1\leq 2^{1861}\alpha^{720},\quad\quad \beta_2\leq
  2^{325}\alpha^{126},\quad\quad \beta_3\leq
  2^{2424}\alpha^{937},\quad\quad \beta_4\leq 2^{930}\alpha^{360}.
\end{equation}
\end{theorem}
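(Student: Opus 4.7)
The plan is to follow the strategy of Tao~\cite{tao}, which proves the qualitative version of this statement, but to track every constant through each step explicitly. The argument splits naturally into three stages.

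\emph{Stage 1: non-commutative Balog--Szemer\'edi--Gowers.} The first step is to convert the energy hypothesis $e(A,B)\geq \alpha^{-1}$ into a product-set statement. The goal is to produce subsets $A'\subset A$ and $B'\subset B$ with $|A'|\geq \beta^{-1}|A|$, $|B'|\geq \beta^{-1}|B|$ and
$$
|A'\cdot B'|\leq \beta\, |A|^{1/2}|B|^{1/2},
$$
for some $\beta$ which is polynomial in $\alpha$. In the non-commutative setting this is carried out by a graph-theoretic argument: from the collision count defining $E(A,B)$ one builds a bipartite graph on $A\cup B$ where edges encode popular products, and a dependent random choice / double-counting of paths produces the pair $(A',B')$. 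The qualitative version in~\cite{tao} yields polynomial dependence on $\alpha$; here I would redo it replacing each soft estimate by an inequality with an explicit constant.

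\emph{Stage 2: Ruzsa-type estimates and small tripling.} From small product set of $A'$ and $B'$, non-commutative Ruzsa triangle and Pl\"unnecke--Ruzsa--Tao inequalities (as developed in Sections 3--4 of~\cite{tao}) produce a set, essentially of the form $\app{H}_0=(A'B'^{-1})\cdot (A'B'^{-1})^{-1}$ or a similar symmetric neighbourhood, satisfying $\trp{\app{H}_0}\leq \alpha^{O(1)}$ and containing a translate of a large fraction of $A$ (resp.\ $B$). The explicit Ruzsa inequalities are essentially of the form $|X\cdot Y|/|Y|\leq (|X^{\pm 1}\cdot Z^{\pm 1}|/|Z|)^{k}$ for small $k$, so each application contributes a fixed exponent on $\alpha$, which just needs to be recorded.

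\emph{Stage 3: producing an approximate subgroup.} To upgrade a set of small tripling into a genuine approximate subgroup, I would apply the construction from~\cite[\S 4]{tao}: symmetrize, take a suitable power, then use the Ruzsa covering lemma to get a symmetric $X\subset \nfold{\app{H}}{2}$ with $|X|\leq \beta_1$ such that $\app{H}\cdot \app{H}\subset X\cdot \app{H}$. The covering lemma is quantitative: if $|U\cdot V|\leq K|V|$ then $U$ is covered by at most $K$ left translates of $V\cdot V^{-1}$, so it feeds an explicit constant directly into $\beta_1$. The ``large intersection'' statements $|A\cap x\app{H}|\geq \beta_3^{-1}|A|$ and $|B\cap \app{H}y|\geq \beta_3^{-1}|B|$ come from a Ruzsa covering of $A$ (resp.\ $B$) by translates of $\app{H}$, using the information retained from Stage 1.

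\textbf{Main obstacle.} No single step is conceptually deep once~\cite{tao} is in hand; the difficulty is bookkeeping. Each of the three stages contributes a factor of the form $C_i \alpha^{e_i}$, and the exponents compound multiplicatively when one stage is fed as input to the next (in particular, Stage 1 outputs a ``doubling constant'' that becomes the $\alpha$ of Stage 2). Getting the specific numbers in~\eqref{eq-explicit-mc} requires choosing the order of the Ruzsa/Pl\"unnecke applications to minimize exponent inflation, and balancing parameters in the graph argument of Stage 1 against the covering-lemma constant in Stage 3. I expect the dominant contribution to the $720$ in the exponent of $\beta_1$ to come from iterating covering-type lemmas after the BSG step, and this is where the careful computation has to be done.
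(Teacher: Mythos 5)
Your three-stage outline is the same route the paper takes: its appendix follows Tao's argument line by line, applying the Balog--Gowers--Szemer\'edi theorem first, then the machinery converting a small product set into an approximate subgroup, with the Ruzsa covering lemma supplying the covering set and the large-intersection statements. So the strategy is the right one. But two things need to be said. First, the content of the theorem \emph{is} the numerical bounds~\eqref{eq-explicit-mc}; the qualitative assertion is already \cite[Th.~5.4]{tao}. Your proposal stops exactly where that content begins --- no constant is ever derived, and your final paragraph concedes that ``this is where the careful computation has to be done'' --- so as written it is a plan rather than a proof.

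Second, and more substantively, the one step you describe concretely enough to evaluate is the one that would fail if executed as written. In Stage 2 you propose to take $\app{H}_0=(A'B'^{-1})\cdot(A'B'^{-1})^{-1}$ and certify $\trp{\app{H}_0}\leq\alpha^{O(1)}$ by ``Pl\"unnecke--Ruzsa--Tao inequalities.'' In a non-commutative group, control of the single product set $|A'B'|$ does not bootstrap to control of iterated products of a derived set: small doubling does not imply small tripling, and Ruzsa's inequality (Proposition~\ref{pr-ruzsa}) only runs from tripling to higher products, not the other way. This is precisely why Tao introduces the symmetry set $S=\{x\in G\,\mid\, |A_1\cap A_1x|>(2\alpha_1^2)^{-1}|A_1|\}$, proves $|A_1\nfold{S}{n}A_1^{-1}|\leq 2^n\alpha_1^{4n+2}|A_1|$ for \emph{all} $n$, and only then sets $\app{H}=\nfold{S}{3}$; see \cite[Prop.~4.5, Th.~4.6]{tao}, reproduced as Theorem~\ref{th-46} in the appendix. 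Your hedge ``or a similar symmetric neighbourhood'' papers over exactly this mechanism. Repair Stage 2 by routing through the symmetry set, and then actually perform the bookkeeping (the exponent $720$ comes from the chain $\alpha\mapsto\alpha_1=2^{23}\alpha^9\mapsto 2(16\alpha_1^{16})^{5}$, not primarily from iterated covering lemmas as you guess), and you will land on the paper's proof.
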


Except for the values of the constants, this is proved
in~\cite[Th. 5.4, (i) implies (iv)]{tao} and quoted
in~\cite[Th. 2.48]{tao-vu}. Since this is obtained by following line
by line the arguments of Tao, we defer a proof to the Appendix.

\section{Growth for $\SL_2$}

We prove here Theorem~\ref{th-helfgott}. The argument we use is
basically the one sketched by Pyber and Szab\'o in~\cite[\S 1.1]{psz}
(which is expanded in their paper to cover much more general
situations). It is closely related to the one of Breuillard, Green and
Tao~\cite{bgt}, and many ingredients are already visible in Helfgott's
original argument~\cite{helfgott}.

\subsection{Elementary facts and definitions}

We begin with an important observation, which applies to all finite
groups, and goes back to Ruzsa: to prove that the tripling constant of
a generating set $H$ is at least a small power of $|H|$, it is enough
to prove that the growth ratio after an arbitrary (but fixed) number
of products is of such order of magnitude.

\begin{proposition}[Ruzsa]\label{pr-ruzsa}
  Let $G$ be a finite group, and let $H\subset G$ be a symmetric
  non-empty subset.
\par
\emph{(1)} Denoting $\alpha_n=|\nfold{H}{n}|/|H|$, we have
\begin{equation}\label{eq-ruzsa}
\alpha_{n}\leq \alpha_3^{n-2}=\trp{H}^{n-2}
\end{equation}
for all $n\geq 3$.
\par
\emph{(2)} We have $\trp{\nfold{H}{2}}\leq \trp{H}^4$ and for $k\geq
3$, we have
$$
\trp{\nfold{H}{k}}\leq \trp{H}^{3k-3}.
$$
\end{proposition}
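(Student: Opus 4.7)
The key tool is the (non-abelian) Ruzsa triangle inequality: for any non-empty finite subsets $A$, $X$, $Y$ of a group,
\[
|A| \cdot |X \cdot Y| \leq |X \cdot A| \cdot |A^{-1} \cdot Y|,
\]
which follows from the injectivity of the map $A \times XY \to XA \times A^{-1}Y$ sending $(a, z) \mapsto (x_z a,\, a^{-1} y_z)$, where for each $z$ a fixed factorization $z = x_z y_z$ with $x_z \in X$, $y_z \in Y$ is chosen. Specializing to the symmetric set $H$, taking $A = H$, $X = \nfold{H}{n-1}$ and $Y = \nfold{H}{2}$, yields the core recursive estimate
\[
|H| \cdot |\nfold{H}{n+1}| \leq |\nfold{H}{n}| \cdot |\nfold{H}{3}|, \qquad \text{i.e.,} \qquad \alpha_{n+1} \leq \alpha_n \trp{H},
\]
valid for every $n \geq 2$.

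For part (1), I would simply iterate this from the base value $\alpha_3 = \trp{H}$, obtaining $\alpha_n \leq \trp{H}^{n-2}$ for $n \geq 3$ by an immediate induction.

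For part (2), I would use the same recursion in telescoped form. The bound $|\nfold{H}{j+1}|/|\nfold{H}{j}| \leq \trp{H}$ holds for every $j \geq 1$; the edge case $j = 1$ follows from Ruzsa applied with $X = \{1\}$ and $Y = \nfold{H}{2}$, which reduces to $|\nfold{H}{2}| \leq |\nfold{H}{3}|$. Telescoping then gives
\[
\trp{\nfold{H}{k}} = \frac{|\nfold{H}{3k}|}{|\nfold{H}{k}|} = \prod_{j=k}^{3k-1} \frac{|\nfold{H}{j+1}|}{|\nfold{H}{j}|} \leq \trp{H}^{2k}.
\]
For $k = 2$ this is exactly the asserted $\trp{H}^4$; for $k \geq 3$ the inequality $2k \leq 3k-3$, together with $\trp{H} \geq 1$, gives $\trp{H}^{2k} \leq \trp{H}^{3k-3}$, as claimed.

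The only real obstacle here is mechanical: choosing the right $A$, $X$, $Y$ in the Ruzsa inequality to line up the indices $n-1$, $2$, and $3$ correctly, and separately handling the degenerate low-index cases (notably $j = 1$). No deeper combinatorial or group-theoretic input is needed; the whole argument ultimately rests on the single injectivity construction underlying Ruzsa's inequality.
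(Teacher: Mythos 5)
Your proposal is correct and follows essentially the same route as the paper: part (1) is the standard consequence of the Ruzsa triangle inequality (which the paper simply cites as well-known, referring to Helfgott's Lemma 2.2), and part (2) in both cases amounts to bounding $\alpha_{3k}/\alpha_k$ using the estimates from (1). The only (harmless) difference is that your telescoping of the one-step recursion yields the slightly sharper exponent $2k$ for $k\geq 3$, from which the stated $3k-3$ follows since $\trp{H}\geq 1$, whereas the paper bounds $\alpha_{3k}\leq \trp{H}^{3k-2}$ and divides by $\alpha_k\geq\alpha_3$.
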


\begin{proof}
  The first part is well-known (see, e.g.,~\cite[Proof of Lemma
  2.2]{helfgott}). For (2), we have
$$
\trp{\nfold{H}{k}}=\frac{\alpha_{3k}}{\alpha_k}.
$$
\par
Since $\alpha_{k}\geq \alpha_3$ for $k\geq 3$, we obtain
$\trp{\nfold{H}{k}}\leq \alpha_3^{3k-3}$ for $k\geq 3$ by (1), while
for $k\geq 2$, we simply use $\alpha_2\geq 1$ to get
$\trp{\nfold{H}{2}}\leq \alpha_3^4$.
\end{proof}

We first use Ruzsa's Lemma to show that Helfgott's Theorem holds when
$|H|$ is small, in the following sense:

\begin{lemma}\label{lm-small-p}
  Let $G$ be a finite group and let $H$ be a symmetric generating set
  of $G$ containing $1$. If $\nfold{H}{3}\not=G$, we have
  $|\nfold{H}{3}|\geq 2^{1/2}|H|$.
\end{lemma}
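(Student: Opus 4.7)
My plan is to prove the contrapositive: if $\nfold{H}{3}\neq G$ then $|\nfold{H}{3}|\geq \sqrt 2 \,|H|$. The ingredients I would use are both already in place in this section: a strict-monotonicity principle for the sizes $|\nfold{H}{n}|$, and Proposition~\ref{pr-ruzsa}(1).

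First, I would observe that, since $1\in H$, the sequence $(\nfold{H}{n})_{n\geq 1}$ is non-decreasing for inclusion, and an equality $\nfold{H}{n+1} = \nfold{H}{n}$ forces $H\cdot \nfold{H}{n} = \nfold{H}{n}$; since $\langle H\rangle = G$, iterating this relation gives $g\cdot \nfold{H}{n} = \nfold{H}{n}$ for every $g\in G$, whence $\nfold{H}{n} = G$ (using $1\in \nfold{H}{n}$). So whenever $\nfold{H}{n}\neq G$ we have strict growth $|\nfold{H}{n+1}|\geq |\nfold{H}{n}|+1$. Applying Proposition~\ref{pr-ruzsa}(1) with $n=4$ gives
\[ |\nfold{H}{4}|\leq \frac{|\nfold{H}{3}|^2}{|H|}, \qquad\text{i.e.,}\qquad |\nfold{H}{3}|^2\geq |H|\cdot |\nfold{H}{4}|. \]
It would therefore suffice to establish $|\nfold{H}{4}|\geq 2|H|$ under the hypothesis $\nfold{H}{3}\neq G$.

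The main obstacle is this last inequality. A useful first remark is that $\nfold{H}{3}\neq G$ already implies $|G|\geq 2|H|$: otherwise $|H|>|G|/2$, so for any $g\in G$ the sets $Hg$ and $H$ must overlap in $G$, giving $g\in H^{-1}H=H\cdot H$ and hence $\nfold{H}{2}=G$, a contradiction. If $|\nfold{H}{2}|\geq 2|H|$ I would be done immediately since $\nfold{H}{4}\supseteq \nfold{H}{2}$. Otherwise $\nfold{H}{2}$ is a symmetric, identity-containing, generating subset of $G$ whose square has doubling strictly below $2$, and I would need a rigidity-type analysis to finish. The natural route, which I expect to be the technical heart of the proof, is a case analysis on the small integer $j=|\nfold{H}{2}|-|H|$: the case $j=1$ goes through directly, because the unique element $y\in \nfold{H}{2}\setminus H$ must be an involution (otherwise $y^{-1}\in \nfold{H}{2}\setminus H$ would give a second new element), and a bijectivity argument on the cosets of the left stabilizer $\{g\in G:gH=H\}$ then shows $y\cdot H\subseteq \nfold{H}{2}$, so that $\nfold{H}{3}=\nfold{H}{2}$ and, by the monotonicity principle, $\nfold{H}{2}=G$; the larger values of $j$ are handled by an analogous coset analysis combined with Ruzsa's bound $|\nfold{H}{3}|\leq|\nfold{H}{2}|^2/|H|$, delivering $|\nfold{H}{4}|\geq 2|H|$ and hence the lemma.
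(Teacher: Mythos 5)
Your reduction is the right one, and it is exactly the paper's: by Ruzsa's inequality \refth{eq-ruzsa} with $n=4$ one has $|\nfold{H}{4}|\leq |\nfold{H}{3}|^2/|H|$, so it suffices to prove $|\nfold{H}{4}|\geq 2|H|$. Your strict-monotonicity observation is also correct and is the other half of what the paper uses. The problem is the step you yourself flag as ``the technical heart'': you never actually prove $|\nfold{H}{4}|\geq 2|H|$. The proposed case analysis on $j=|\nfold{H}{2}|-|H|$ is only sketched, and even the $j=1$ subcase rests on the unjustified claim that $y\cdot H\subset \nfold{H}{2}$ for the unique new element $y$; an element of $\nfold{H}{2}$ translates $H$ into $\nfold{H}{3}$, not into $\nfold{H}{2}$, and I see no way the ``left stabilizer'' argument delivers what you assert. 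For general $j$ (e.g.\ $|\nfold{H}{2}|$ just below $2|H|$) no argument is offered at all, so the proof is incomplete as it stands.

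The irony is that you already have everything needed for a two-line finish. Your strict-monotonicity principle, applied once, shows that $\nfold{H}{3}\neq G$ forces $\nfold{H}{3}\neq \nfold{H}{2}$ (since $\nfold{H}{3}=\nfold{H}{2}$ would give $\nfold{H}{2}=G$, hence $\nfold{H}{3}=G$). Now pick $x\in \nfold{H}{3}\smallsetminus \nfold{H}{2}$ and consider the injective map $h\mapsto hx$ on $H$. Its image $Hx$ lies in $H\cdot \nfold{H}{3}=\nfold{H}{4}$ and is disjoint from $H$, because $hx=h'$ with $h,h'\in H$ would give $x=h^{-1}h'\in \nfold{H}{2}$ by symmetry of $H$. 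Hence $|\nfold{H}{4}|\geq |H|+|Hx|=2|H|$, and your Ruzsa step concludes. This translation trick is the paper's argument; no analysis of the doubling of $\nfold{H}{2}$ is needed. (As the paper remarks after the lemma, a sharper elementary result $|\nfold{H}{2}|\geq \tfrac32|H|$ is known for generating sets, but that is a different and harder route than what is required here.)
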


\begin{proof}
  If the triple product set is not all of $G$, it follows that
  $\nfold{H}{3}\not=\nfold{H}{2}$. We fix some $x\in
  \nfold{H}{3}-\nfold{H}{2}$, and consider the injective map
$$
i\,:\, \lmap{H}{G}{h}{hx}.
$$
\par
The image of this map is contained in $\nfold{H}{4}$ and it is
disjoint with $H$ since $x\notin\nfold{H}{2}$. Hence $\nfold{H}{4}$,
which contains $H$ and the image of $i$, satisfies $|\nfold{H}{4}|\geq
2|H|$. Hence, by Ruzsa's Lemma, we obtain
$$
\trp{H}\geq \Bigl(\frac{|\nfold{H}{4}|}{|H|}\Bigr)^{1/2}\geq 2^{1/2}. 
$$
\end{proof}

\begin{remark}
  In fact, as the referee pointed out, a better result is known (and
  is elementary): if $H\subset G$ generates $G$ then $\nfold{2}{H}\geq
  \frac{3}{2}|H|$ (see~\cite{tao-blog}).
\end{remark}

The following version of the orbit-stabilizer theorem will be used to
reduce the proof of lower-bounds on the size a set to an upper-bound
for another.

\begin{proposition}[Helfgott]
  Let $G$ be a finite group acting on a non-empty finite set $X$. Fix
  some $x\in X$ and let $K\subset G$ be the stabilizer of $x$ in
  $G$. For any non-empty symmetric subset $H\subset G$, we have
$$
|K\cap \nfold{H}{2}|\geq \frac{|H|}{|H\cdot x|}
$$
where $H\cdot x=\{h\cdot x\,\mid\, h\in H\}$.
\end{proposition}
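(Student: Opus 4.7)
The plan is to run a pigeonhole argument on the evaluation map
$$
\varphi\colon \lmap{H}{H\cdot x}{h}{h\cdot x}.
$$
Since $\varphi$ is surjective onto $H\cdot x$ by construction, some fiber $\varphi^{-1}(y)$ must have cardinality at least $|H|/|H\cdot x|$ by the pigeonhole principle. Let $h_1,\dots,h_m\in H$ be the distinct elements of this fiber, with $m\geq |H|/|H\cdot x|$.

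Next I would convert this fiber into a subset of $K\cap \nfold{H}{2}$. For each $i$, the equality $h_i\cdot x = h_1\cdot x$ gives $h_1^{-1}h_i\in K$. Because $H$ is symmetric, $h_1^{-1}\in H$, so $h_1^{-1}h_i\in H\cdot H=\nfold{H}{2}$. As the $h_i$ are distinct and left multiplication by $h_1^{-1}$ is injective, the elements $h_1^{-1}h_i$, for $i=1,\dots,m$, are $m$ distinct elements of $K\cap \nfold{H}{2}$. This yields
$$
|K\cap \nfold{H}{2}|\geq m \geq \frac{|H|}{|H\cdot x|},
$$
as required.

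There is essentially no obstacle here: the statement is a clean combination of the orbit–stabilizer principle with pigeonhole, and symmetry of $H$ is used only to guarantee $h_1^{-1}\in H$. The only small care needed is to keep track of distinctness when passing from the fiber to its translate by $h_1^{-1}$, which is automatic by injectivity of left multiplication.
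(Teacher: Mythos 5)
Your proof is correct and follows essentially the same route as the paper: both arguments study the fibers of the orbit map restricted to $H$ and use symmetry of $H$ to translate a fiber into $K\cap \nfold{H}{2}$. The only cosmetic difference is that you apply pigeonhole to a single largest fiber, whereas the paper bounds every fiber by $|K\cap\nfold{H}{2}|$ and sums; these are equivalent.
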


(Note that since $H$ is symmetric, we have $1\in K\cap \nfold{H}{2}$.)

\begin{proof}
  As in the classical proof of the orbit-stabilizer theorem, we
  consider the orbit map, but restricted to $H$
$$
\phi\,:\,
\lmap{H}{X}{h}{h\cdot x}.
$$
\par
Using the fibers of this map to count the number of elements in $H$,
we get
$$
|H|=\sum_{y\in \phi(H)}{|\phi^{-1}(y)|}.
$$
\par
But the image of $\phi$ is $\phi(H)=H\cdot x$, and we have
$$
|\phi^{-1}(y)|\leq |K\cap \nfold{H}{2}|
$$
for all $y$ (indeed, if $y=\phi(h_0)$ with $h_0\in H$, then all
elements $h\in H$ with $\phi(h)=y$ satisfy $h_0^{-1}h\in K\cap
\nfold{H}{2}$). Therefore we get
$$
|H|\leq |H\cdot x||K\cap \nfold{H}{2}|,
$$
as claimed.
\end{proof}

Finally, a last lemma shows that if a subset $H$ has small tripling
constant ``in a subgroup'', then $H$ itself has small tripling (in the
language of approximate groups, it is a special case of the fact that
the intersection of two approximate groups is still one).

\begin{lemma}\label{lm-intersection}
Let $G$ be a finite group, $K\subset G$ a subgroup, and $H\subset G$
an arbitrary symmetric subset.  For any $n\geq 1$, we have
$$
\frac{|\nfold{H}{n+1}|}{|H|}
\geq \frac{|\nfold{H}{n}\cap K|}{|\nfold{H}{2}\cap K|}.
$$
\end{lemma}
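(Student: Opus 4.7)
The plan is to prove the inequality by a simple double-counting argument, applied to the map $(h,k) \mapsto hk$ on the Cartesian product $H \times (\nfold{H}{n}\cap K)$. The key geometric observation is that its image lies in $\nfold{H}{n+1}$: indeed, for $h\in H$ and $k\in \nfold{H}{n}\cap K\subset \nfold{H}{n}$, we have $hk\in \nfold{H}{n+1}$. So if one controls both the total size of the domain (namely $|H|\cdot |\nfold{H}{n}\cap K|$) and the size of the fibers of the multiplication map, one gets a lower bound on $|\nfold{H}{n+1}|$ of the shape we want.

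The first main step is therefore to bound the fibers. Suppose $y = hk = h'k'$ are two representations of the same element $y\in G$ with $h,h'\in H$ and $k,k'\in \nfold{H}{n}\cap K$. Then
$$
u := (h')^{-1}h = k'k^{-1}
$$
lies on the one hand in $H\cdot H = \nfold{H}{2}$ (using that $H$ is symmetric, so $(h')^{-1}\in H$), and on the other hand in $K$ (since both $k,k'\in K$). Thus $u\in \nfold{H}{2}\cap K$, and once one representation $(h,k)$ is fixed, every other one is of the form $(h(u')^{-1}, u'k)$ with $u'\in \nfold{H}{2}\cap K$, so is uniquely determined by $u'$. Hence each fiber has cardinality at most $|\nfold{H}{2}\cap K|$.

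The second step is just to assemble the inequality. Counting the pairs $(h,k)\in H\times (\nfold{H}{n}\cap K)$ according to the value $y = hk\in \nfold{H}{n+1}$ of their product, and using the fiber bound just proven, one obtains
$$
|H|\cdot |\nfold{H}{n}\cap K| \leq |\nfold{H}{n+1}|\cdot |\nfold{H}{2}\cap K|,
$$
which, dividing through by $|H|\cdot |\nfold{H}{2}\cap K|$, is exactly the claim. There is essentially no obstacle in this argument; the only thing one has to be careful about is the symmetry of $H$ (used to ensure $(h')^{-1}\in H$ so that $u$ really belongs to $\nfold{H}{2}$), and the fact that we do not need $H$ itself to meet $K$ nor to contain $1$ for the proof to go through.
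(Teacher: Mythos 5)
Your proof is correct, and it is essentially the same argument as the paper's: the paper decomposes $\nfold{H}{n+1}$ into cosets of $K$, bounds the number of cosets meeting $H$ from below by $|H|/|\nfold{H}{2}\cap K|$ and the contribution of each from below by $|\nfold{H}{n}\cap K|$, which is exactly your double count of the multiplication map $H\times(\nfold{H}{n}\cap K)\to\nfold{H}{n+1}$ organized coset by coset. In particular your fiber bound (two representations $hk=h'k'$ differ by $u=(h')^{-1}h=k'k^{-1}\in\nfold{H}{2}\cap K$) is the same observation the paper uses to bound $|xK\cap H|$.
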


\begin{proof}
Let $X\subset G/K$ be the set of cosets of $K$ intersecting $H$:
$$
X=\{xK\in G/K\,\mid\, xK\cap H\not=\emptyset\}.
$$
\par
We can estimate the size of this set from below by splitting $H$ into
its intersections with cosets of $K$: we have
$$
|H|=\sum_{xK\in X}{|H\cap xK|}.
$$
\par
But for any $xK\in X$, fixing some $g_0\in xK\cap H$, we have
$g^{-1}g_0\in K\cap \nfold{H}{2}$ if $g\in xK\cap H$, and hence
$$
|xK\cap H|\leq |K\cap\nfold{H}{2}|.
$$
\par
This gives the lower bound
$$
|X|\geq \frac{|H|}{|K\cap \nfold{H}{2}|}.
$$
\par
Now take once more some $xK\in X$, and fix an element $xk=h\in xK\cap
H$. Then all the elements $xkg$ are distinct for $g\in K$, and they
are in $xK\cap \nfold{H}{n+1}$ if $g\in K\cap \nfold{H}{n}$, so that
$$
|xK\cap \nfold{H}{n+1}|\geq |K\cap \nfold{H}{n}|
$$
for any $xK\in X$, and (cosets being disjoint)
$$
|\nfold{H}{n+1}|\geq |X||K\cap \nfold{H}{n}|,
$$
which gives the result when combined with the lower bound for $|X|$.
\end{proof}

We will use classical structural definitions and facts about finite
groups of Lie type. In particular, a regular semisimple element $g\in
\Gg=\SL_2(\bar{\Fp}_p)$ is a semisimple element with distinct
eigenvalues. The centralizer of such an element is a maximal torus in
$\Gg$. For any subset $H\subset \Gg$, we write $H_{reg}$ for the set
of the regular semisimple elements in $H$. A maximal torus $T\subset
G=\SL_2(\Fp_p)$ is the intersection $G\cap \Tt$, where $\Tt$ is a
maximal torus of $\Gg$ which is stable under the Frobenius
automorphism $\sigma$. Here are the basic properties of regular
semisimple elements and their centralizers; these are all standard
facts, and we omit the proofs. (For general facts about finite groups
of Lie type, one may look at~\cite{digne-michel} or~\cite[Ch. 1 and
3]{carter}, and for conjugacy classes in $\SL_2(\Fp_p)$, one may look
for instance at~\cite[p. 70]{fulton-harris}; another source for
$\SL_2$ is~\cite[Ch. 6]{suzuki}).

\begin{proposition}\label{pr-tori-elementary}
  Fix a prime number $p$ and let $G=\SL_2(\Fp_p)$,
  $\Gg=\SL_2(\bar{\Fp}_p)$.
\par
\emph{(1)} A regular semisimple element $x\in \Gg$ is contained in a
\emph{unique} maximal torus $\Tt$, namely its centralizer
$\Tt=C_{\Gg}(x)$. In particular, if $\Tt_1\not=\Tt_2$ are two maximal
tori, we have
\begin{equation}\label{eq-dist-tori-inter}
\Tt_{1,reg}\cap \Tt_{2,reg}=\emptyset.
\end{equation}
\par
\emph{(2)} If $\Tt\subset \Gg$ is a maximal torus, we have
$$
|\Tt_{nreg}|=|\Tt-\Tt_{reg}|=2.
$$
\par
\emph{(3)} For any maximal torus $\Tt$, the normalizer $N_{\Gg}(\Tt)$
contains $\Tt$ as a subgroup of index $2$. Similarly, for any maximal
torus $T\subset G$, $N_G(T)$ contains $T$ as a subgroup of index
$2$, and in particular
$$
2(p-1)\leq |N_G(T)|\leq 2(p+1).
$$
\par
\emph{(4)} The conjugacy class $\bcl(g)$ of a regular semisimple
element $g\in \Gg$ is the set of all $x\in \Gg$ such that
$\Tr(x)=\Tr(g)$. The set of elements in $\Gg$ which are not regular
semisimple is the set of all $x\in \Gg$ such that $\Tr(x)^2=4$.
\end{proposition}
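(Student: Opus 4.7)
The plan is to verify all four parts on a single model maximal torus in $\Gg = \SL_2(\bar{\Fp}_p)$ and then transfer by $\Gg$-conjugacy. Every maximal torus of $\Gg$ is conjugate to the standard split diagonal torus $\Tt_0 = \{\mathrm{diag}(\lambda, \lambda^{-1}) : \lambda \in \bar{\Fp}_p^\times\}$, so after conjugation I may always assume the torus in question is $\Tt_0$. Throughout I use only elementary linear algebra on $2 \times 2$ matrices together with the Lang--Steinberg classification of $\sigma$-stable tori.

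For part (1), I would conjugate the regular semisimple element $x$ to $\mathrm{diag}(\lambda, \lambda^{-1})$ with $\lambda^2 \neq 1$, and compute $C_\Gg(x)$ directly: writing $g = \begin{pmatrix} a & b \\ c & d \end{pmatrix}$, commutation with $x$ forces $b(\lambda^{-1}-\lambda) = c(\lambda-\lambda^{-1}) = 0$, hence $b = c = 0$, so $C_\Gg(x) = \Tt_0$. Any maximal torus containing $x$ is abelian and sits inside $C_\Gg(x) = \Tt_0$, hence equals $\Tt_0$ by maximality. The disjointness (\ref{eq-dist-tori-inter}) is then immediate: a common regular element of $\Tt_1$ and $\Tt_2$ forces $\Tt_1 = C_\Gg(x) = \Tt_2$. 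For part (2), the non-regular elements of $\Tt_0$ satisfy $\lambda = \lambda^{-1}$, i.e., $\lambda = \pm 1$ (using $p$ odd), giving exactly $\{\pm I\}$, hence $|\Tt_{nreg}| = 2$.

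For part (3), I would exhibit the Weyl element $w = \begin{pmatrix} 0 & 1 \\ -1 & 0 \end{pmatrix} \in \SL_2$, check that $w \cdot \mathrm{diag}(\lambda, \lambda^{-1}) \cdot w^{-1} = \mathrm{diag}(\lambda^{-1}, \lambda) \in \Tt_0$, and observe that any element of $N_\Gg(\Tt_0)$ must permute the two eigenlines of a regular element of $\Tt_0$, hence lies in $\Tt_0 \cup w\Tt_0$. To obtain the finite-group version, I would recall that the $G$-conjugacy classes of maximal tori of $G$ correspond to $\sigma$-conjugacy classes in the Weyl group, yielding exactly two classes: a split torus of order $p-1$ and a non-split (anisotropic) torus of order $p+1$ (the kernel of the norm $\Fp_{p^2}^\times \to \Fp_p^\times$). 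In both cases $N_G(T) = N_\Gg(\Tt)^\sigma$, and the element $w$ above can be chosen $\sigma$-fixed, giving $[N_G(T) : T] = 2$ and hence the bound $2(p-1) \leq |N_G(T)| \leq 2(p+1)$.

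For part (4), every $g \in \Gg$ satisfies the characteristic polynomial $t^2 - \Tr(g)t + 1$ by Cayley--Hamilton together with $\det g = 1$, whose discriminant is $\Tr(g)^2 - 4$. If $\Tr(g)^2 \neq 4$, then $g$ has two distinct eigenvalues and is diagonalizable, and any $x$ with $\Tr(x) = \Tr(g)$ has the same characteristic polynomial, so the same pair of eigenvalues, and is $\Gg$-conjugate to $g$ by matching eigenbases; this gives $\bcl(g) = \{x \in \Gg : \Tr(x) = \Tr(g)\}$. Conversely, if $\Tr(x)^2 = 4$ the polynomial has the double root $\pm 1$, so $x$ is either a scalar $\pm I$ (semisimple but not regular) or conjugate to a Jordan block $\pm \begin{pmatrix} 1 & 1 \\ 0 & 1 \end{pmatrix}$ (not semisimple). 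The whole argument reduces to $2 \times 2$ matrix bookkeeping on the fixed torus $\Tt_0$ and presents no real obstacle; the only point of mild care is the split/non-split dichotomy in (3), which is handled uniformly via the Lang--Steinberg parametrization.
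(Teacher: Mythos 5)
Your argument is correct. Note, however, that the paper does not actually prove this proposition: it is stated as a collection of standard facts with the proofs omitted and references given (Carter, Digne--Michel, Fulton--Harris, Suzuki). So your contribution is a self-contained elementary verification rather than an alternative to an existing argument. Your route --- reduce everything to the diagonal torus $\Tt_0$ by conjugacy in $\Gg=\SL_2(\bar{\Fp}_p)$, compute centralizers and normalizers by hand, and read off (4) from the characteristic polynomial $t^2-\Tr(g)t+1$ --- is exactly the kind of $2\times 2$ bookkeeping that makes these facts transparent for $\SL_2$, and it is all sound: the centralizer computation in (1) forces $b=c=0$; the eigenline argument pins $N_{\Gg}(\Tt_0)=\Tt_0\cup w\Tt_0$; and the discriminant $\Tr(x)^2-4$ cleanly separates the regular semisimple locus. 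Two small points deserve flagging. First, part (2) as stated requires $p$ odd (for $p=2$ one has $-I=I$ and $|\Tt_{nreg}|=1$); you correctly note this, and it is harmless since the paper only ever invokes the proposition for $p\geq 3$. Second, the one step that is not pure matrix algebra is the finite-group half of (3): identifying the two $G$-classes of tori (orders $p\mp 1$), the equality $N_G(T)=N_{\Gg}(\Tt)^{\sigma}$, and the existence of a $\sigma$-fixed representative of the nontrivial coset $w\Tt$ for the \emph{non-split} torus all rest on Lang--Steinberg (a $\sigma$-stable coset of a connected group has a rational point); you invoke this correctly, but for the non-split case ``the element $w$ above'' cannot literally be taken to be $\begin{pmatrix}0&1\\-1&0\end{pmatrix}$ relative to the standard coordinates --- one either applies Lang--Steinberg to the coset or realizes the torus concretely as the norm-one subgroup of $\Fp_{p^2}^{\times}$ acting on $\Fp_{p^2}\cong\Fp_p^2$ and uses the Frobenius of $\Fp_{p^2}$ (suitably normalized to have determinant one) as the Weyl representative.
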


Finally, (a variant of) the following concept was introduced under
different names and guises by Helfgott, Pyber-Szab\'o, and
Breuillard-Green-Tao. We chose the name from the last team.

\begin{definition}[A set involved with a torus]
  Let $p$ be a prime number, $H\subset \SL_2(\Fp_p)$ a finite set and
  $\Tt\subset \SL_2(\bar{\Fp}_p)$ a maximal torus. Then \emph{$H$ is
    involved with $\Tt$}, or $\Tt$ with $H$, if and only if $\Tt$ is
  $\sigma$-invariant and $H$ contains a regular semisimple element of
  $\Tt$ with non-zero trace, i.e., $H\cap \Tt_{sreg}\not=\emptyset$
  where the superscript ``sreg'' restricts to regular semisimple
  elements with non-zero trace.
\end{definition}

\begin{remark}
  The twist in this definition, compared with the one in~\cite{psz}
  or~\cite{bgt}, is that we insist on having non-zero trace. This will
  be helpful later on, as it will eliminate a whole subcase in the key
  estimate (the proof of Proposition~\ref{pr-dichotomy}), and lead to
  a shorter proof, with better explicit constants. However, this
  restriction is not really essential in the greater scheme of things,
  and it would probably not be a good idea to do something similar for
  more general groups.
\end{remark}

The alternative $\nfold{H}{3}=\SL_2(\Fp_p)$ in Helfgott's growth
theorem will be obtained as a corollary of the Gowers-Nikolov-Pyber
``quasi-random groups'' argument (see~\cite{gowers} and~\cite{np}).

\begin{proposition}\label{pr-qr}
  For a prime $p\geq 3$, if a subset $H\subset \SL_2(\Fp_p)$ satisfies
 $$
|H|\geq 2|\SL_2(\Fp_p)|^{8/9},
$$
we have $\nfold{H}{3}=\SL_2(\Fp_p)$.
\end{proposition}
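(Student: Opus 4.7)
The plan is to deduce this from the Gowers--Nikolov--Pyber quasirandomness principle: for a finite group $G$ with $d(G)$ the minimum dimension of a nontrivial irreducible complex representation, any three subsets $A$, $B$, $C \subset G$ satisfying $|A|\cdot|B|\cdot|C| > |G|^3/d(G)$ have $ABC = G$. I would apply this with $A=B=C=H$, which gives $\nfold{H}{3} = \SL_2(\Fp_p)$ as soon as $|H|^3 > |\SL_2(\Fp_p)|^3/d(\SL_2(\Fp_p))$.

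The only representation-theoretic input is the classical fact (read off from the character table of $\SL_2(\Fp_p)$) that for $p \geq 5$ the minimum dimension of a nontrivial complex irreducible representation is $(p-1)/2$. Using $|\SL_2(\Fp_p)| = p(p-1)(p+1) \leq p^3$, the verification is then purely numerical: the hypothesis $|H| \geq 2|\SL_2(\Fp_p)|^{8/9}$ gives $|H|^3 \geq 8|\SL_2(\Fp_p)|^{8/3}$, so it suffices to check
$$
8 \cdot \frac{p-1}{2} > |\SL_2(\Fp_p)|^{1/3},
$$
and since $|\SL_2(\Fp_p)|^{1/3} \leq p$, this reduces to $4(p-1) > p$, which holds for all $p \geq 2$.

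The case $p=3$ would be handled separately: $|\SL_2(\Fp_3)| = 24$ and $2 \cdot 24^{8/9} > 24$, so no subset $H$ of $\SL_2(\Fp_3)$ can satisfy the hypothesis and the conclusion is vacuous. (This is also consistent with the fact that $\SL_2(\Fp_3)$ is not quasirandom, its quotient by $\{\pm 1\}$ being $A_4$ which admits nontrivial one-dimensional representations.)

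The main obstacle is just locating, or if necessary reproducing, a version of the Gowers--Nikolov--Pyber inequality with constants matching the announced $|G|^{8/9}$ threshold; the factor $2$ in the hypothesis leaves comfortable slack, since the bare quasirandomness bound for $\SL_2(\Fp_p)$ already gives a critical size of order $|G|^{8/9}$. Beyond that, the argument is the elementary arithmetic check above, with no further surprises.
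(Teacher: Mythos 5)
Your proof is correct and follows exactly the route the paper intends: the paper defers the proof to the cited lecture notes, which use precisely the Gowers--Nikolov--Pyber quasirandomness bound ($|A||B||C|\geq |G|^3/d(G)$ implies $ABC=G$) together with Frobenius's lower bound $d(\SL_2(\Fp_p))=(p-1)/2$ for $p\geq 5$, and your numerical verification and your observation that the hypothesis is vacuous for $p=3$ are both accurate.
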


For a proof, see, e.g.,~\cite[\S 4.5]{expanders}.

\subsection{Escape from subvarieties and non-concentration lemmas}

Two important tools in the proof of growth theorems for linear groups
are estimates for escape from subvarieties and for non-concentration
in subvarieties. We state and prove in this section the special cases
which we need for the explicit proof of Helfgott's Theorem. The reader
may wish to look only at the statements and skip afterwards to the
next section to see how they are used.

\begin{lemma}[Escape]\label{lm-escape}
  Let $p\geq 7$ be a prime number and let $H\subset \SL_2(\Fp_p)$ be a
  symmetric generating set with $1\in H$. Then
  $\nfold{H}{3}_{sreg}\not=\emptyset$, i.e., the three-fold product
  set $\nfold{H}{3}$ contains a regular semisimple element $x$ with
  non-zero trace.\footnote{\ The condition $p\geq 7$ is sharp,
    see~\cite[Example 4.6.13]{expanders} for an example.} In
  particular, there exists a torus $\Tt=\Cc_{\Gg}(x)$ involved with
  $\nfold{H}{3}$.
\end{lemma}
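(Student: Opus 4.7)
My plan is to argue by contradiction: I will assume that every element of $\nfold{H}{3}$ has trace in $\{0,2,-2\} \subset \Fp_p$, i.e., lies in the codimension-$1$ subvariety
$$
V = \{x \in \SL_2 : \Tr(x)(\Tr(x)^2 - 4) = 0\},
$$
and derive that $\langle H\rangle$ must be contained in a proper subgroup of $\SL_2(\Fp_p)$, contradicting the generation hypothesis. The basic tool is the Cayley--Hamilton relation $A + A^{-1} = \Tr(A)\, I$ in $\SL_2$, which yields the two trace identities
$$
\Tr(AB) + \Tr(AB^{-1}) = \Tr(A)\Tr(B), \qquad \Tr(A^2 B) = \Tr(A)\Tr(AB) - \Tr(B).
$$
For any $g, h \in H$, the six elements $g,h,gh,gh^{-1},g^2 h, gh^2$ all lie in $\nfold{H}{3}$ (using $1\in H$ and symmetry), so their traces lie in $\{0,\pm 2\}$. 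Writing $a=\Tr(g), b=\Tr(h), c=\Tr(gh)$, the two identities become integer-valued constraints on $a,b,c$. Since the relevant integer combinations take values in $\{-6,\ldots,6\}$ and $p\geq 7$ ensures that distinct integers in this range remain distinct modulo $p$, a short case-check yields the dichotomy: either (i) $|a|=|b|=2$ and $c=ab/2 \in \{\pm 2\}$, or (ii) at least one of $a,b,c$ is $0$, with a precise incidence pattern (for instance, if $a=0$ and $b\neq 0$ then $c=0$).

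\emph{Case A}: no element of $H$ has trace $0$. Then every $g\in H$ is $\pm I$ or $\pm$-unipotent. By (i), any two non-central $g_1,g_2\in H$ have $\Tr(g_1 g_2)=\pm 2$, and a direct $2\times 2$ computation shows that two $\pm$-unipotents with product of trace $\pm 2$ must stabilize a common line in $\Fp_p^2$. Hence $H$ lies in the Borel subgroup fixing that line, contradicting $\langle H\rangle = \SL_2(\Fp_p)$.

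\emph{Case B}: some $g_0\in H$ satisfies $\Tr(g_0)=0$, so $g_0^2=-I$ and $g_0^{-1}=-g_0$. Applying the identity $\Tr(g_0 h^2)=\Tr(h)\Tr(g_0 h)$ together with (ii) forces $\Tr(h)\Tr(g_0 h)=0$ for every $h\in H$. Hence each non-central $h\in H$ of trace $\pm 2$ satisfies $(g_0 h)^2=-I$, equivalently $g_0 h g_0^{-1}=h^{-1}$. For $h$ a non-identity $\pm$-unipotent, $N_{\SL_2}(\langle h\rangle)$ is the Borel $B_h$ stabilizing $h$'s fixed line, and the conjugation action forces the diagonal entry $\lambda$ of $g_0$ (in coordinates adapted to $B_h$) to satisfy $\lambda^2=-1$. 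If $p\equiv 3\pmod 4$, no such $\lambda$ exists in $\Fp_p$, so $H$ contains no non-central $\pm$-unipotent, every non-identity element of $H$ has trace $0$, and a pair analysis in $\PSL_2$ shows the images of such elements are pairwise commuting involutions, forcing $|\langle H\rangle|\leq 8$, absurd. If $p\equiv 1\pmod 4$, two non-central $\pm$-unipotents in $H$ with distinct fixed lines would force $g_0$ into the intersection of two distinct Borels, i.e., a split maximal torus; iterating with a third such unipotent or with an auxiliary trace-$0$ element of $H$ forces $g_0\in\{\pm I\}$, contradicting $\Tr(g_0)=0$. Hence all non-central $\pm$-unipotents in $H$ share a common fixed line $L$; combining with pair and triple trace constraints on the trace-$0$ elements of $H$ shows each such element either preserves $L$ or swaps $L$ with a fixed complementary line $L'$, so $H$ lies in the stabilizer of $\{L,L'\}$, namely the normalizer of a split maximal torus, a proper subgroup of $\SL_2(\Fp_p)$.

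The main obstacle is the $p\equiv 1\pmod 4$ branch of Case B: several a-priori-compatible positions of $g_0$ (inside a split or non-split torus within $B_L$, or in the unipotent radical of $B_L$) coexist, and one must combine the pair constraints from $\nfold{H}{2}$ with triple constraints on mixed products such as $\Tr(g_0 g_1 h)$ and $\Tr(g_0 h_1 h_2)$ to rigidify the configuration and force the common proper subgroup containing $H$.
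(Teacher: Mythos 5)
Your overall strategy is the same as the paper's: assume every element of $\nfold{H}{3}$ has trace in $\{0,\pm 2\}$, exploit the $\SL_2$ trace identities, and split according to whether $H$ contains a non-central element of trace $\pm 2$ and whether $-1$ is a square in $\Fp_p$. The pair dichotomy you extract is correct and is a clean way to organize the argument; Case A goes through (note that you must use the precise value $c=ab/2$, not merely $c=\pm 2$: two unipotents whose product has trace $-2$ need not share a fixed line, since $\Tr(u_1u_2)=2+tc$ can equal $-2$), and the $p\equiv 3\pmod 4$ branch of Case B can be completed along the lines you indicate.

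The genuine gap is the subcase you yourself flag as the ``main obstacle'': $p\equiv 1\pmod 4$ with every non-central element of $H$ of trace $0$ (so there is no unipotent, hence no line $L$, to anchor the argument). Your proposed endpoint there --- that pair and triple trace constraints force each trace-zero element to preserve or swap a fixed pair of lines, so that $H\subset N(T)$ --- is not established, and the pair constraints alone demonstrably do not give it: with $z^2=-1$, the elements $g_0=\left(\begin{smallmatrix}z&0\\0&-z\end{smallmatrix}\right)$ and $g_1=\left(\begin{smallmatrix}z&1\\0&-z\end{smallmatrix}\right)$ both have trace $0$ and satisfy $\Tr(g_0g_1)=2z^2=-2\in\{0,\pm2\}$, yet $g_1$ lies in a Borel containing $T=C(g_0)$ and not in $N(T)$. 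So pairs do not rigidify the configuration, and the correct resolution is not containment in $N(T)$ but a direct contradiction extracted from specific triple products. This is exactly what the paper does: after normalizing $g_0$ to $\mathrm{diag}(z,-z)$, it shows that any $g'\in H$ with nonzero diagonal entry $a$ forces $a^2=-1$ and hence $bc=0$ (else $\Tr(g_0g')=2za\notin\{0,\pm2\}$ already finishes), then produces an upper-triangular $g'$ with $b\neq 0$ and a lower-triangular $g''$ with $c\neq 0$ in $H$ and checks that one of $\Tr(g'g'')=bc-2$ or $\Tr(g_0g'g'')=bcz$ lies outside $\{0,\pm2\}$, using $p\neq 2,5$. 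Some such computation is unavoidable (it is where the sharpness of $p\geq 7$ enters), and without it your proof is incomplete precisely in the hardest subcase. A smaller, fixable slip: in your $p\equiv1\pmod 4$ discussion the claim that an ``auxiliary trace-$0$ element'' forces $g_0=\pm I$ has no mechanism behind it; the common fixed line for the $\pm$-unipotents in $H$ should instead be deduced directly from the pair dichotomy applied to two such unipotents, exactly as in your Case A.
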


The general non-concentration inequalities are now often called
``Larsen-Pink inequalities'', since the first versions appeared in the
work of Larsen and Pink~\cite{larsen-pink} on finite subgroups of
linear groups. ``Approximate'' versions occur in the work of
Hrushovski~\cite{hrushovski} and Breuillard-Green-Tao~\cite{bgt}, with
closely related results found in that of Pyber and Szab\'o~\cite{psz}.

\begin{theorem}[Non-concentration inequality]\label{th-lp}
  Let $p\geq 3$ be a prime number and let $g\in \SL_2(\Fp_p)=G$ be a
  regular semisimple element with non-zero trace. Let $\bcl(g)\subset
  \SL_2(\bar{\Fp}_p)=\Gg$ be the conjugacy class of $g$. If $H\subset
  G$ is a symmetric generating set containing $1$, we have
\begin{equation}\label{eq-lp1}
  |\bcl(g)\cap H|\leq 7\alpha^{2/3}|H|^{2/3}
\end{equation}
where $\alpha=\trp{H}$ is the tripling constant of $H$, \emph{unless}
\begin{equation}\label{eq-lp2}
  \alpha>|H|^{1/28}.
\end{equation}
\end{theorem}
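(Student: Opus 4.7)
Plan: The bound $|\bcl(g)\cap H|\leq 7\alpha^{2/3}|H|^{2/3}$ is a Larsen--Pink type non-concentration estimate; the exponent $2/3 = \dim\bcl(g)/\dim\Gg$ reflects the codimension-one nature of the conjugacy class $\bcl(g)$ inside $\SL_2$. My plan combines Helfgott's orbit-stabilizer lemma (proved above) applied to the conjugation action of $G$ on itself, the disjointness and trace properties of maximal tori recorded in Proposition~\ref{pr-tori-elementary}, and Ruzsa's Lemma (Proposition~\ref{pr-ruzsa}) to control the tripling of higher powers $\nfold{H}{k}$.

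Set $C = H\cap\bcl(g)$ and $\tilde C = \nfold{H}{3}\cap\bcl(g)$. For every $x\in C$ we have $\Tr(x)=\Tr(g)\neq\pm 2$, so $x$ is regular semisimple and lies in a unique maximal torus $T_x=C_{\Gg}(x)$ by Proposition~\ref{pr-tori-elementary}(1) and~(4). Helfgott's orbit-stabilizer applied to the conjugation action on $\bcl(g)$ with the set $H$ gives
$$|T_x\cap\nfold{H}{2}| \;\geq\; \frac{|H|}{|H\cdot x|} \;\geq\; \frac{|H|}{|\tilde C|},$$
since the orbit $H\cdot x = \{hxh^{-1}:h\in H\}$ lies in $\nfold{H}{3}$ and preserves trace, so $H\cdot x\subset\tilde C$. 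Enumerate the distinct tori $T_1,\ldots,T_N$ arising this way as $x$ varies in $C$. Proposition~\ref{pr-tori-elementary}(4) identifies $T_i\cap\bcl(g)$ with a fiber of the trace map on the one-dimensional torus $T_i$, so $|T_i\cap\bcl(g)|\leq 2$ and hence $|C|\leq 2N$. By Proposition~\ref{pr-tori-elementary}(1)--(2), the regular loci of distinct tori are disjoint and each torus contains exactly two non-regular elements, whence
$$N\cdot\frac{|H|}{|\tilde C|} \;\leq\; \sum_i |T_i\cap\nfold{H}{2}| \;\leq\; |\nfold{H}{2}|+2N.$$
Provided $|H|/|\tilde C|\geq 4$, this rearranges (using $|\nfold{H}{2}|\leq|\nfold{H}{3}|=\alpha|H|$) to $|C|\leq 4\alpha|\tilde C|$.

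This single-pass bound only yields $|C|\lesssim\alpha^2|H|$, which is far too weak. To reach the $|H|^{2/3}$ scaling I would iterate the argument with $H$ replaced by $\nfold{H}{k}$ for small $k$ (typically $k=1,2,3$), controlling $\trp{\nfold{H}{k}}\leq\alpha^{3k-3}$ via Ruzsa's Lemma, and chain the resulting inequalities together with the trivial bound $|\nfold{H}{k}\cap\bcl(g)|\leq|\nfold{H}{k}|\leq\alpha^{k-2}|H|$. The goal is to extract an inequality of shape $|C|^3\leq c_1\alpha^{c_2}|H|^2$, from which $|C|\leq 7\alpha^{2/3}|H|^{2/3}$ follows after plugging in explicit constants. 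The non-zero-trace hypothesis is exactly what removes the sub-case involving the central elements $\pm I$ that is flagged in the remark after the definition of ``involved with a torus'', keeping the accumulated constants $c_1,c_2$ manageable.

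The alternative $\alpha>|H|^{1/28}$ arises from the clearing conditions needed along the iteration, starting with $|H|/|\tilde C|\geq 4$: whenever such a condition fails, the relevant set $\nfold{H}{k}\cap\bcl(g)$ is almost as large as $\nfold{H}{k}$ itself, which forces $\alpha$ to be at least a small positive power of $|H|$, the exponent $1/28$ being tuned to absorb all the $\alpha^{O(1)}$ factors collected during the iteration. The main obstacle is precisely this iteration step: the single orbit-stabilizer invocation sees only one torus per element of $C$ and delivers a bound linear in $|\tilde C|$, whereas the $|H|^{2/3}$ exponent reflects the genuine $2$-dimensional structure of $\bcl(g)$. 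Making the iteration propagate a true power saving without letting Ruzsa's tripling losses dominate is the delicate part; this is where the Pyber--Szab\'o approach referenced at the start of the section should be indispensable.
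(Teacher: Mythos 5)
There is a genuine gap. Your orbit--stabilizer computation is correct as far as it goes, but it is structurally incapable of producing the exponent $2/3$, and the ``iteration'' you invoke to fix this is not a proof step --- it is a restatement of the difficulty. The inequality you derive, $|C|\leq 4\alpha|\tilde C|$ with $\tilde C=\nfold{H}{3}\cap\bcl(g)$, bounds the quantity you want by the \emph{same} quantity for a larger product set; replacing $H$ by $\nfold{H}{k}$ reproduces exactly this forward reference at every stage, so no absolute power saving in $|H|$ can ever be extracted. You correctly identify the target shape $|C|^3\leq c_1\alpha^{c_2}|H|^2$, but the orbit--stabilizer argument is \emph{linear} in $|C|$ and supplies no mechanism for the cube on the left-hand side. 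The cube comes from a completely different device, which is the actual content of the paper's proof: one considers the map $\phi(x_1,x_2,x_3)=(x_1x_2,x_1x_3)$ from $\bcl(g)^3$ to $\Gg^2$, notes that $(\bcl(g)\cap H)^3$ maps into $\nfold{H}{2}\times\nfold{H}{2}$, and counts $|C|^3$ by summing over fibers; since $\dim\bcl(g)^3=\dim\Gg^2=6$, the generic fiber is finite (of size at most $2$), which immediately gives $|C|^3\lesssim|\nfold{H}{2}|^2\leq\alpha^2|H|^2$ up to the exceptional fibers.

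The remaining work --- and the place where the real technical input lies --- is the classification of the $(y_1,y_2)$ for which the fiber $\phi^{-1}(y_1,y_2)\cong\bcl(g)\cap y_1\bcl(g)\cap y_2\bcl(g)$ is infinite (Lemma~\ref{lm-pink}), and a separate non-concentration estimate of the form $|H\cap xC_\gamma x^{-1}|\leq 2\alpha^2|H|^{1/3}$ for the one-dimensional sets arising in the exceptional case (Lemma~\ref{lm-subkey}, itself proved by another fiber-counting map with singleton fibers). None of this appears in your proposal. Your reading of the non-zero-trace hypothesis is also off target: it is needed to exclude the case $\Tr(g)=0$ of Pink's classification, where the triple intersection can be infinite for additional pairs $(y_1,y_2)$, not to handle the central elements $\pm I$. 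The orbit--stabilizer lemma and the torus disjointness facts you cite are indeed used in this paper, but in Proposition~\ref{pr-dichotomy}, where they run in the opposite direction: they convert the \emph{upper} bound of Theorem~\ref{th-lp} into a \emph{lower} bound on $|\Tt_{reg}\cap\nfold{H}{2}|$. They are consumers of the non-concentration inequality, not a route to proving it.
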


From this last fact, we will deduce the following dichotomy, which is
the precise tool used in the next section to prove Helfgott's
Theorem. 

\begin{proposition}[Involving dichotomy]\label{pr-dichotomy}
\par
\emph{(1)} For all prime number $p$, all subsets $H\subset
\SL_2(\Fp_p)$ and all maximal tori $\Tt\subset \SL_2(\bar{\Fp}_p)$, if
$\Tt$ and $H$ are not involved, we have
$$
|H\cap \Tt|\leq 4.
$$
\par
\emph{(2)} If $p\geq 3$ and $H\subset \SL_2(\Fp_p)=G$ is a symmetric
generating set containing $1$, we have
\begin{equation}\label{eq-key-ineq}
  |\Tt_{reg}\cap \nfold{H}{2}|\geq 14^{-1}\alpha^{-4}|H|^{1/3}
\end{equation}
for any maximal torus $\Tt\subset \SL_2(\bar{\Fp}_p)$ which is
involved with $H$, where $\alpha=\trp{H}$, unless
\begin{equation}\label{eq-key-altern}
  \alpha\geq |H|^{1/168}.
\end{equation}
\end{proposition}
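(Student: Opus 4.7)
Part (1) reduces to a short case analysis based on the definition of ``involved''. If $\Tt$ is not $\sigma$-invariant, then every $x \in G \cap \Tt$ satisfies $x = x^{\sigma} \in \Tt^{\sigma}$, so $G \cap \Tt \subset \Tt \cap \Tt^{\sigma}$; by Proposition~\ref{pr-tori-elementary}(1), any regular semisimple element lies in a \emph{unique} maximal torus, so $\Tt \cap \Tt^{\sigma} \subset \Tt_{nreg}$ and hence has at most $2$ elements. If instead $\Tt$ is $\sigma$-invariant but $H \cap \Tt_{sreg} = \emptyset$, then $H \cap \Tt \subset \Tt \setminus \Tt_{sreg}$, which consists of the $2$ non-regular elements together with at most $2$ trace-zero regular semisimple elements (their eigenvalues $\lambda,-\lambda \in \bar{\Fp}_p$ must satisfy $\lambda^{2} = -1$). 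Either way $|H \cap \Tt| \leq 4$.

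For part (2), I would fix a witness $g \in H \cap \Tt_{sreg}$ of the involvement and apply Helfgott's orbit-stabilizer proposition (stated after Lemma~\ref{lm-small-p}) to the conjugation action of $G$ on itself. Since $g$ is regular semisimple, its centralizer in $\Gg$ is the unique maximal torus $\Tt$, so $C_{G}(g) = G \cap \Tt$, and the bound gives
$$
|G \cap \Tt \cap \nfold{H}{2}| \geq \frac{|H|}{|H \cdot g|} \geq \frac{|H|}{|\bcl(g) \cap \nfold{H}{3}|},
$$
using that $\{hgh^{-1} : h \in H\} \subset \bcl(g) \cap \nfold{H}{3}$ as $H = H^{-1}$. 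Next I would invoke the Larsen-Pink inequality (Theorem~\ref{th-lp}) for the set $\nfold{H}{3}$, which is symmetric, contains $1$ and generates $G$, with tripling constant at most $\alpha^{6}$ by Proposition~\ref{pr-ruzsa}(2) and size $\alpha|H|$. The alternative condition in Theorem~\ref{th-lp} is $\trp{\nfold{H}{3}} > |\nfold{H}{3}|^{1/28}$, and since $|\nfold{H}{3}| \geq |H|$ a sufficient condition for its failure is $\alpha^{6} \leq |H|^{1/28}$, i.e.\ $\alpha < |H|^{1/168}$---which is exactly the negation of the alternative~\eqref{eq-key-altern}. Finally I would pass from $G \cap \Tt \cap \nfold{H}{2}$ to $\Tt_{reg} \cap \nfold{H}{2}$ by discarding the at most $|\Tt_{nreg}| = 2$ non-regular elements.

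The delicate step is tracking constants tightly enough to reach precisely \eqref{eq-key-ineq}. A direct combination of the two black-box bounds above produces $|\bcl(g) \cap \nfold{H}{3}| \leq 7\alpha^{14/3}|H|^{2/3}$, giving a lower bound of the shape $|H|^{1/3}/(7\alpha^{14/3}) - 2$; this is stronger than the stated $|H|^{1/3}/(14\alpha^{4})$ when $\alpha \leq 2^{3/2}$ but weaker for larger $\alpha$, even though $\alpha < |H|^{1/168}$ comfortably accommodates such values. Closing this gap presumably requires either sharpening the Ruzsa bound $\trp{\nfold{H}{3}} \leq \alpha^{6}$, or a more careful treatment of the factor $|\nfold{H}{3}|^{2/3} = \alpha^{2/3}|H|^{2/3}$ that Larsen-Pink produces when applied to $\nfold{H}{3}$---perhaps by keeping the bound in terms of $|\nfold{H}{3}|$ and $\trp{\nfold{H}{3}}$ and only at the end absorbing the slack into the constant $14$. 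The overall skeleton---orbit-stabilizer for the conjugation action at $g$, Larsen-Pink on $\nfold{H}{3}$, and removal of $\Tt_{nreg}$---is unaffected, and it is precisely this skeleton that causes the alternative \eqref{eq-key-altern} to arise so cleanly.
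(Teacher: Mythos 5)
Your skeleton is exactly the paper's: for (1) a count of $\Tt_{nreg}$ plus the at most two trace-zero elements of $\Tt$ (the paper simply calls this ``obvious''; your extra case distinction for non-$\sigma$-invariant $\Tt$ via~\eqref{eq-dist-tori-inter} is a welcome bit of care), and for (2) the orbit--stabilizer bound~\eqref{eq-appl-orbit-stab} at a witness $g\in H\cap\Tt_{sreg}$, followed by Theorem~\ref{th-lp} applied to $\nfold{H}{3}$ with $\trp{\nfold{H}{3}}\leq\alpha^6$, the exceptional case of Larsen--Pink feeding directly into~\eqref{eq-key-altern} exactly as you say. The ``delicate step'' you could not close is not closed in the paper either: the paper asserts $|\nfold{H}{3}\cap\bcl(g)|\leq 7\alpha^4|H|^{2/3}$, whereas the honest computation is $7\,\trp{\nfold{H}{3}}^{2/3}|\nfold{H}{3}|^{2/3}\leq 7\alpha^4\cdot(\alpha|H|)^{2/3}=7\alpha^{14/3}|H|^{2/3}$; the factor $\alpha^{2/3}$ coming from $|\nfold{H}{3}|^{2/3}$ versus $|H|^{2/3}$ has been dropped, so there is no hidden trick you are missing --- the correct clean statement of~\eqref{eq-key-ineq} with this argument has $\alpha^{-14/3}$ in place of $\alpha^{-4}$ (which only perturbs the downstream numerical exponents). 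One genuine difference worth adopting from the paper is the treatment of the two non-regular elements: rather than subtracting $2$ at the end, the paper splits again --- either $7^{-1}\alpha^{-\ast}|H|^{1/3}>4$, in which case halving the bound absorbs the two non-regular elements and yields the factor $14^{-1}$, or else $\alpha\geq\demi|H|^{1/12}$, which is stronger than~\eqref{eq-key-altern} once $|H|>2^{13}$ (and for smaller $|H|$ Lemma~\ref{lm-small-p} already does better). You should incorporate that step so the conclusion has the exact two-alternative form of the proposition.
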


\begin{proof}
  (1) is obvious, since $|\Tt-\Tt_{reg}|\leq 2$ and there are also at
  most two elements of trace $0$ in $\Tt$ (as one can check
  quickly).
\par
For (2), we apply the orbit-stabilizer theorem. Let $T=\Tt\cap \Gg$ be
a maximal torus in $G$. Fixing any $g\in T_{reg}$, we have
$T=C_{G}(g)$, the stabilizer of $g$ in $G$ for its conjugacy action on
itself. We find that
\begin{equation}\label{eq-appl-orbit-stab}
|\Tt\cap \nfold{H}{2}|\geq \frac{|H|}{|\{hgh^{-1}\,\mid\, h\in H\}|}
\end{equation}
for any symmetric subset $H$. Since $H$ is involved with $\Tt$, we can
select $g$ in $T_{sreg}\cap H$ in this inequality, and the denominator
on the right becomes
$$
|\{hgh^{-1}\,\mid\, h\in H\}|\leq |\nfold{H}{3}\cap \cl(g)|\leq
|\nfold{H}{3}\cap \bcl(g)|
$$
where $\cl(g)$ is the conjugacy class of $g$ in $G$. Applying the
Larsen-Pink inequality to $\nfold{H}{3}$, with tripling constant
bounded by $\alpha^6$ (by Ruzsa's Lemma), we obtain the lower bound
$$
|\Tt\cap \nfold{H}{2}|\geq \frac{|H|}{
|\nfold{H}{3}\cap \bcl(g)|}
\geq 
7^{-1}\alpha^{-4}|H|^{1/3},
$$
unless $\alpha=\trp{H}\geq |H|^{1/168}$. In the first case, we get
$$
|\Tt_{reg}\cap \nfold{H}{2}|\geq 14^{-1}\alpha^{-4}|H|^{1/3},
$$
unless
$$
7^{-1}\alpha^{-4}|H|^{1/3}\leq 2
$$
since there are only two elements of $\Tt\cap \nfold{H}{2}$ which are
not regular. This last alternative gives
$$
\alpha\geq \demi |H|^{1/12}
$$
which we see is a stronger conclusion than~(\ref{eq-key-altern})
(precisely, it is strictly stronger if $|H|>2^{13}$, but in the other
case the lower bound $\trp{H}\geq \sqrt{2}$ from
Lemma~\ref{lm-small-p} is already a better result.)  Hence
Proposition~\ref{pr-dichotomy} is proved.
\end{proof}

Now we prove the escape and non-concentration results.

\begin{proof}[Proof of Lemma~\ref{lm-escape}]
  The basic point that allows us to give a quick proof is that the set
  $N\subset \SL_2(\Fp_p)$ of elements which are not regular semisimple
  is invariant under $\SL_2(\Fp_p)$-conjugation, and is the set of all
  matrices with trace equal to $2$ or $-2$. It is precisely the union
  of the two central elements $\pm 1$ and the four conjugacy classes
  of
$$
u=\begin{pmatrix}1&1\\0&1
\end{pmatrix},\quad v=\begin{pmatrix}-1&1\\0&-1
\end{pmatrix},\quad
u'=\begin{pmatrix}1&\eps\\0&1
\end{pmatrix},\quad v'=\begin{pmatrix}-1&\eps\\0&-1
\end{pmatrix}
$$
(where $\eps\in\Fp_p^{\times}$ is a fixed non-square) while elements
of trace $0$ are the conjugates of
$$
g_0=\begin{pmatrix}0&1\\-1&0
\end{pmatrix}
$$
(these are standard facts, which can be checked on the list of
conjugacy classes in~\cite[p. 70]{fulton-harris}, for instance.)
\par
We next note that, if the statement of the lemma fails for a given
$H$, it also fails for every conjugates of $H$, and that this allows
us to normalize at least one element to a specific representative of
its conjugacy class. It is convenient to argue by contradiction,
though this is somewhat cosmetic. So we assume that
$\nfold{H}{3}_{nreg}$ is empty and $p\geq 7$, and will derive a
contradiction.
\par
We distinguish two cases. In the first case, we assume that $H$
contains one element of trace $\pm 2$ which is not $\pm 1$. The
observation above shows that we can assume that one of $u$, $v$, $u'$,
$v'$ is in $H$. We deal first with the case $u\in H$.
\par
Since $H$ is a symmetric generating set, it must contain some element
$$
g=\begin{pmatrix}a&b\\c&d
\end{pmatrix},
$$
with $c\not=0$, since otherwise, all elements of $H$ would be
upper-triangular, and $H$ would not generate $\SL_2(\Fp_p)$. Then
$\nfold{H}{3}$ contains $ug$, $u^2g$, $u^{-1}g$, $u^{-2}g$, which have
traces, respectively, equal to $\Tr(g)+c$, $\Tr(g)+2c$, $\Tr(g)-c$,
$\Tr(g)-2c$. Since $c\not=0$, and $p$ is not $2$ or $3$, we see that
these traces are distinct, and since there are $4$ of them, one at
least is not in $\{-2,0,2\}$, which contradicts our assumption.
\par
If $u'\in H$, the argument is almost identical. If $u'$ (or similarly
$v'$) is in $H$, the set of traces of $(u')^jg$ for $j\in
\{-2,-1,0,1,2\}$ is
$$
\{\Tr(g)+2c,-\Tr(g)-c,\Tr(g),-\Tr(g)+c,\Tr(g)-2c\},
$$
and one can check that for $p\geq 5$, one of these is not $0$, $-2$ or
$2$, although some could coincide (for instance, if $\Tr(g)=2$, the
other traces are $\{2+2c,-2-c,-2+c,2-2c\}$, and if $c-2=2$, we get
traces $\{2,-6,10\}$, but $-6\notin \{0,2,-2\}$ for $p\geq 5$).
\par
In the second case, all elements of $H$ except $\pm 1$ have trace $0$.
We split in two subcases, but depending on properties of $\Fp_p$. 
\par
The first one is when $-1$ is \emph{not} a square in $\Fp_p$.
Conjugating again, we can assume that $g_0\in H$. Because $H$
generates $\SL_2(\Fp_p)$, there exists $g\in H$ which is not $\pm 1$,
$\pm g_0$. If 
$$
g=\begin{pmatrix}
a&b\\
c&-a
\end{pmatrix}\in H
$$
is such an element, we have $a\not=0$, since 
otherwise $b=-c^{-1}$ and the trace of $g_0g$ is $c+c^{-1}$, which is
not in $\{-2,0,2\}$ (non-zero because $-1$ is not a square in our
first subcase), so $\nfold{H}{2}_{nreg}\not=\emptyset$, contrary to
the assumption. Moreover, we can find $g$ as above with $b\not=c$:
otherwise, it would follow that $H$ is contained in the normalizer of
a non-split maximal torus, again contradicting the assumption that $H$
is a generating set.
\par
Now we argue with $g$ as above (i.e., $a\not=0$, $b\not=c$). We have
$$
g_0g=\begin{pmatrix}
c&-a\\
-a& -b
\end{pmatrix}\in \nfold{H}{2},
$$
with non-zero trace $t=c-b$. Moreover, if $t=2$ , i.e., $c=b+2$, the
condition $\det(g_0g)=1$ implies
$$
-2b-b^2-a^2=1
$$
or $(b+1)^2=-a^2$. Similarly, if $t=-2$, we get $(b-1)^2=-a^2$. Since
$a\not=0$, it follows in both cases that $-1$ is a square in $\Fp_p$,
which contradicts our assumption in the first subcase.
\par
Now we come to the second subcase when $-1=z^2$ is a square in
$\Fp_p$. We can then diagonalize $g_0$ over $\Fp_p$, and conjugating
again, this means we can assume that $H$ contains
$$
g'_0=\begin{pmatrix}z&0\\0&-z
\end{pmatrix}
$$
as well as some other matrix
$$
g'=\begin{pmatrix}
a&b\\
c&-a
\end{pmatrix}
$$
(the values of $a$, $b$, $c$ are not the same as before; we are still
in the case when every element of $H$ has trace $0$ except for $\pm
1$). 
\par
Now the trace of $g'_0g'\in \nfold{H}{2}$ is $2za$. But we can find
$g'$ with $a\not=0$, since otherwise $H$ would again not be a
generating set, being contained in the normalizer of the diagonal
(split) maximal torus, and so this trace is non-zero.
\par
The condition $2za=\pm 2$ would give $za=\pm 1$, which leads to
$-a^2=1$. But since $1=\det(g')=-a^2-bc$, we then get $bc=0$ for all
elements of $H$. Finally, if all elements of $H$ satisfy $b=0$, the
set $H$ would be contained in the subgroup of upper-triangular
matrices. So we can find a matrix in $H$ with $b\not=0$, hence
$c=0$. Similarly, we can find another
$$
g''=\begin{pmatrix}
a&0\\
c&-a
\end{pmatrix}
$$
in $H$ with $c\not=0$. Taking into account that $z^2=-1$, computing
the traces of $g'g''$ and of $g'_0g'g''$ gives
$$
bc-2,\quad\quad bcz
$$
respectively. If $bc=2$, the third trace (of an element in
$\nfold{H}{3}$) is $2z\notin \{0,2,-2\}$ since $p\not=2$, and if
$bc=4$, it is $4z\notin \{0,2,-2\}$ since $p\not=5$. And of course if
$bc\notin \{2,4\}$, the first trace is already not in $\{-2,0,2\}$. So
we are done...
\end{proof}

For the proof of Theorem~\ref{th-lp}, we will use the method suggested
by Larsen and Pink at the beginning of~\cite[\S 4]{larsen-pink}.  We
consider the map
$$
\phi
\lmap{\bcl(g)\times \bcl(g)\times \bcl(g)}{\Gg\times \Gg}
{(x_1,x_2,x_3)}{(x_1x_2,x_1x_3)}
$$
and we note that for $(x_1,x_2,x_3)\in (\bcl(g)\cap H)^3$, we have
$\phi(x_1,x_2,x_3)\in \nfold{H}{2}\times\nfold{H}{2}$. We then hope
that the fibers $\phi^{-1}(y_1,y_2)$ of $\phi$ are all finite with
size bounded independently of $(y_1,y_2)\in \Gg\times\Gg$, say of size
at most $c_1\geq 1$. The reason behind this hope is that $\bcl(g)^3$
and $\Gg^2$ have the same dimension, and hence unless something
special happens, we would expect the fibers to have dimension $0$,
which corresponds to having fibers of bounded size since everything is
defined using polynomial equations.
\par
If this hope turns out to be justified, we can count $|\bcl(g)\cap H|$
by summing according to the values of $\phi$: denoting $Z=(\bcl(g)\cap
H)^3$ and $W=\phi(Z)=\phi((\bcl(g)\cap H)^3)$, we have
$$
|\bcl(g)\cap H|^3=|Z|=\sum_{(y_1,y_2)\in W}{ |\phi^{-1}(y_1,y_2)\cap Z|
}
$$
which -- under our optimistic assumption -- leads to the estimate
$$
|\bcl(g)\cap H|^3\leq c_1 |W|\leq c_1|\nfold{H}{2}|^2\leq
c_1\alpha^2|H|,
$$
which has the form we want.
\par
To implement this -- and solve the complications that arise --, we are
led to analyze the fibers of the map $\phi$. The resulting
computations were explained to the author by R. Pink, and start with
an easy observation:

\begin{lemma}\label{lm-biject-lp}
Let $k$ be any field, and let $G=\SL_2(k)$. Let $C\subset G$ be a
conjugacy class, and define
$$
\phi
\lmap{C^3}{G^2}{(x_1,x_2,x_3)}{(x_1x_2,x_1x_3)}.
$$
\par
Then for any $(y_1,y_2)\in G\times G$, we have a bijection
$$
\lmap{C\cap y_1C^{-1}\cap y_2C^{-1}} {\phi^{-1}(y_1,y_2)}
{x_1}{(x_1,x_1^{-1}y_1,x_1^{-1}y_2)}.
$$
\par
In particular, if $k=\bar{\Fp}_p$ and $C$ is a regular semisimple
conjugacy class, we have a bijection
$$
\phi^{-1}(y_1,y_2)\lra C\cap y_1C\cap y_2C.
$$
\end{lemma}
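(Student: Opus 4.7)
The plan is direct. For the first bijection, I observe that the fibre condition $\phi(x_1,x_2,x_3)=(y_1,y_2)$ forces $x_2=x_1^{-1}y_1$ and $x_3=x_1^{-1}y_2$, so the entire triple is determined by $x_1$ alone. It then suffices to translate the constraint $(x_1,x_2,x_3)\in C^3$ into conditions on $x_1$. Clearly $x_1\in C$, and
$$
x_1^{-1}y_1\in C\ \Longleftrightarrow\ y_1^{-1}x_1\in C^{-1}\ \Longleftrightarrow\ x_1\in y_1C^{-1},
$$
and analogously for the third component. Thus $x_1$ ranges precisely over $C\cap y_1C^{-1}\cap y_2C^{-1}$, and the stated assignment $x_1\mapsto (x_1,x_1^{-1}y_1,x_1^{-1}y_2)$ is well-defined and bijective onto $\phi^{-1}(y_1,y_2)$ (its inverse being the projection onto the first coordinate).

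For the second bijection, it remains only to show that $C^{-1}=C$ when $C\subset\SL_2(\bar{\Fp}_p)$ is a regular semisimple conjugacy class. Pick $g\in C$. Since $g\in\SL_2$, the characteristic polynomial of $g$ and of $g^{-1}$ coincide; in particular $\Tr(g^{-1})=\Tr(g)$, and $g^{-1}$ is still regular semisimple. By Proposition~\ref{pr-tori-elementary}(4), the conjugacy class of a regular semisimple element of $\Gg$ is determined by its trace, so $g^{-1}$ is conjugate to $g$, whence $C^{-1}\subseteq C$; symmetry gives equality. Substituting $C^{-1}=C$ into the first part yields the claimed bijection $\phi^{-1}(y_1,y_2)\leftrightarrow C\cap y_1C\cap y_2C$.

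Since the argument is essentially tautological once the right substitution is spotted, there is no real obstacle; the only point requiring a separate remark is the identity $C^{-1}=C$, which is specific to regular semisimple classes in $\SL_2$ and uses only that traces are preserved under inversion together with the classification of conjugacy classes recalled above.
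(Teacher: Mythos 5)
Your proof is correct and follows essentially the same route as the paper: parameterize the fibre by $x_1$, translate membership of the remaining coordinates in $C$ into $x_1\in y_1C^{-1}\cap y_2C^{-1}$, and use that a regular semisimple class in $\SL_2$ satisfies $C=C^{-1}$ because $g^{-1}$ has the same characteristic polynomial (equivalently, trace) as $g$. No issues.
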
 

\begin{proof}
  Taking $x_1$ as a parameter, any $(x_1,x_2,x_3)$ with
  $\phi(x_1,x_2,x_3)=(y_1,y_2)$ can certainly be written
  $(x_1,x_1^{-1}y_1,x_1^{-1}y_2)$. Conversely, such an element in
  $\SL_2(k)^3$ really belongs to $C^3$ (hence to the fiber) if and
  only if $x_1\in C$, $x_1^{-1}y_1\in C$, $x_1^{-1}y_2\in C$, i.e., if
  and only if $x_1\in C\cap y_1C^{-1}\cap y_2C^{-1}$, which proves the
  first part.
\par
For the second part, we need only notice that if $C$ is a regular
semisimple conjugacy class, say that of $g$, then $C=C^{-1}$ because
$g^{-1}$ has the same characteristic polynomial as $g$, hence is
conjugate to $g$.
\end{proof}

We are now led to determine when an intersection of the form $C\cap
y_1C\cap y_2C$ can be infinite. The answer is as follows, and it is
one place where the use of the infinite group $\SL_2(\bar{\Fp}_p)$ is
significant:

\begin{lemma}[Pink]\label{lm-pink}
  Let $k$ be an algebraically closed field of characteristic not equal
  to $2$, and let $g\in \SL_2(k)$ be a regular semisimple element, $C$
  the conjugacy class of $g$. For $y_1$, $y_2\in G$, the intersection
  $X=C\cap y_1C\cap y_2C$ is finite, containing at most two elements,
  unless one of the following cases holds:
\par
\emph{(1)} We have $y_1=1$, or $y_2=1$ or $y_1=y_2$.
\par
\emph{(2)} There exists a conjugate $\Bb=x\Bb_0 x^{-1}$ of the
subgroup
$$
\Bb_0=\Bigl\{
\begin{pmatrix}
a& b\\0& a^{-1}
\end{pmatrix}
\Bigr\}\subset \SL_2(k)
$$
and an element $t\in \Bb\cap C$ such that
\begin{equation}\label{eq-y2}
  y_1,y_2\in \Uu\cup t^2\Uu
\end{equation}
where 
$$
\Uu=x\Uu_0 x^{-1},\quad \quad \Uu_0=\Bigl\{
\begin{pmatrix}
1& b\\0& 1
\end{pmatrix}
\Bigr\}\subset \Bb_0.
$$
\par
In that case, we have $X\subset C\cap \Bb$.
\par
\emph{(3)} The trace of $g$ is $0$.
\end{lemma}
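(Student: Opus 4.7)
The plan is to reduce the problem to linear algebra in $M_2(k)$. Since $g$ is regular semisimple and $k$ is algebraically closed of characteristic $\ne 2$, the conjugacy class $C$ is the single linear slice $C = \{x \in \SL_2(k) : \Tr(x) = \tau\}$ with $\tau = \Tr(g)$. Therefore $X = C \cap y_1 C \cap y_2 C$ is cut out inside $M_2(k)$ by the quadric $\det x = 1$ together with the three linear conditions $\Tr(x) = \tau$, $\Tr(y_1^{-1} x) = \tau$, $\Tr(y_2^{-1} x) = \tau$. Via the non-degenerate pairing $(a,b) \mapsto \Tr(ab)$ on $M_2(k)$, these three conditions are linearly independent if and only if $\{I, y_1^{-1}, y_2^{-1}\}$ is linearly independent in $M_2(k)$, and the proof splits along this dichotomy.

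First I would handle the independent case. Then the three linear conditions define a $1$-dimensional affine line $\ell \subset M_2(k)$, and $X = \ell \cap \SL_2(k)$ contains at most two points unless $\ell \subset \SL_2(k)$. Using the identity $\det(A + sB) = \det A + s(\Tr A \cdot \Tr B - \Tr(AB)) + s^2 \det B$ for $2 \times 2$ matrices, the condition $\ell \subset \SL_2$ forces the direction vector $v$ of $\ell$ to be nilpotent. Conjugating so that $v$ lies in the upper-triangular unipotent Lie algebra, the line becomes a coset $\ell = x_0 \Uu$ with $x_0 \in \Bb \cap C$ (where $\Uu$ is the unipotent radical of the Borel $\Bb$). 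Parametrizing $x(s) \in \ell$ and expanding $\Tr(y_i^{-1} x(s)) \equiv \tau$ into its constant and $s$-linear parts produces two equations on the entries of $y_i^{-1}$: the $s$-linear part forces $y_i \in \Bb$, and the constant part, combined with $\det y_i^{-1} = 1$, becomes a quadratic equation in the diagonal entry of $y_i^{-1}$ with two solutions, corresponding to $y_i \in \Uu$ or $y_i \in t^2 \Uu$, where $t$ is the diagonal element of $\Bb \cap C$ with eigenvalues $\lambda, \lambda^{-1}$ of $g$. This is exactly case~(2).

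Next I would handle the dependent case. If one of $y_1^{-1}$, $y_2^{-1}$ is a scalar, then that $y_i = \pm I$: the sign $+$ gives case~(1), while $-$ forces $X \subset C \cap (-C) = \emptyset$ unless $\tau = 0$ (case~(3)). If $y_1^{-1}$ and $y_2^{-1}$ are proportional with neither scalar, then $y_1 = \pm y_2$, leading analogously to case~(1) or~(3). Otherwise there is a genuine relation $\alpha y_1^{-1} + \beta y_2^{-1} = I$ with $\alpha, \beta \ne 0$. Multiplying by $x$ and taking traces yields $(\alpha + \beta)\tau = \tau$, so either $\tau = 0$ (case~(3)) or $\alpha + \beta = 1$. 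Then applying the identity $\det(I - \alpha y_1^{-1}) = 1 - \alpha \Tr(y_1^{-1}) + \alpha^2$ to $y_2^{-1} = \beta^{-1}(I - \alpha y_1^{-1})$ and using $\det y_2^{-1} = 1$ forces $\Tr(y_1^{-1}) = 2$, so $y_1$ (and symmetrically $y_2$) is unipotent. Writing $y_i^{-1} = I - N_i$ with $N_i^2 = 0$, the relation reduces to $\alpha N_1 + (1-\alpha) N_2 = 0$, so the nonzero $N_1, N_2$ share the same image line; hence $y_1, y_2$ lie in the unipotent radical $\Uu$ of a common Borel $\Bb$, which falls under case~(2) with any choice of $t \in \Bb \cap C$.

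The main obstacle lies in the hardest branch, where $\{I, y_1^{-1}, y_2^{-1}\}$ is linearly independent but $\ell \subset \SL_2$: one must recognize that the direction $v$ of $\ell$ being nilpotent forces $\ell$ to be a \emph{coset of a unipotent radical}, and then extract the dichotomy $y_i \in \Uu$ versus $y_i \in t^2 \Uu$ from a quadratic equation on the diagonal entries of $y_i^{-1}$. The appearance of the \emph{square} $t^2$ (rather than some other power) is a direct manifestation of that quadratic structure, matching what the lemma's statement asserts.
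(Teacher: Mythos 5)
Your proof is correct, but it is organized quite differently from the paper's. The paper fixes $g$ diagonal, runs through the conjugacy classes of $y_1$ one at a time, computes $D=C\cap y_1^{-1}C$ explicitly as a parametrized curve in each case (the forms~(\ref{eq-first-form})--(\ref{eq-last-form})), and then intersects with the third translate by viewing $\Tr(y_2x)$ as a polynomial of degree at most $2$ in the parameter; the exceptional cases are read off from the vanishing of its coefficients. You instead linearize the whole problem at once: $X$ is the intersection of the quadric $\det=1$ with three affine hyperplanes of $M_2(k)$, and everything splits according to whether $\{1,y_1^{-1},y_2^{-1}\}$ is linearly independent (via the trace pairing). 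This buys a conceptual explanation both of the bound (a line meets a quadric in at most two points unless it lies on it) and of the appearance of $t^2$ (the two roots of $p^2\lambda-p(\lambda+\lambda^{-1})+\lambda^{-1}=0$ are $1$ and $\lambda^{-2}$), and it avoids the conjugacy-class case analysis that the paper partly leaves to the reader; what it loses is the explicit description of the curves $C\cap y_1^{-1}C$, which the paper's method produces as a by-product and which is reassuring to check by hand. Two small points to add when writing this up: first, in the dependent branch you must fix some $x\in X$ before multiplying the relation $\alpha y_1^{-1}+\beta y_2^{-1}=1$ by it and taking traces, so you should say explicitly that you may assume $X\neq\emptyset$ (otherwise there is nothing to prove); second, the statement's final assertion that $X\subset C\cap\Bb$ in case~(2) is immediate in your independent branch (there $X=\ell=x_0\Uu\subset\Bb$), but in the dependent branch where $y_1$, $y_2$ are nontrivial elements of a common $\Uu$ you should observe that the two conditions $\Tr(x)=\Tr(y_1^{-1}x)=\tau$ already force the matrix entry corresponding to the root opposite to $\Uu$ to vanish, i.e.\ $C\cap y_1C\subset\Bb$, so the containment holds there as well.
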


The proof will be given at the end of this section: it is mostly
computational. Before coming back to the proof of Theorem~\ref{th-lp},
we state and prove another preliminary lemma, which is another case of
non-concentration inequalities.

\begin{lemma}\label{lm-subkey}
  For a prime $p$ and $\gamma\in \bar{\Fp}_p^{\times}$, define
$$
C_{\gamma}=\Bigl\{
\begin{pmatrix}
  \gamma & t\\
  0&\gamma^{-1}
\end{pmatrix}
\,\mid\, t\in\bar{\Fp}_p
\Bigr\}.
$$
\par
For any $p\geq 3$, any $\gamma\in\bar{\Fp}_p^{\times}$, any $x\in
\SL_2(\bar{\Fp}_p)$ and any symmetric generating set $H$ of
$\SL_2(\Fp_p)$ containing $1$, we have
$$
|H\cap xC_{\gamma}x^{-1}|=
\Bigl|H\cap
x\Bigl\{
\begin{pmatrix}
\gamma & t\\
0&\gamma^{-1}
\end{pmatrix}\,\mid\, t\in\bar{\Fp}_p\Bigr\}x^{-1} \Bigr|\leq
2\alpha^2 |H|^{1/3}
$$
where $\alpha=\trp{H}$.
\end{lemma}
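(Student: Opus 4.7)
The plan is to reduce the bound to one on $|U \cap \nfold{H}{2}|$, where $U = x\Uu_0 x^{-1}$, and then apply a Bruhat-style injectivity argument using an element of $H$ that lies outside the Borel $x\Bb_0 x^{-1}$.

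The reduction step is a quick computation: one checks by matrix multiplication that $C_\gamma \cdot C_{\gamma^{-1}} = \Uu_0$, so that $xC_\gamma x^{-1}$ is a coset of $U = x\Uu_0 x^{-1}$. Setting $A := H \cap xC_\gamma x^{-1}$ and fixing $a_0 \in A$ (if nonempty), the map $a \mapsto a a_0^{-1}$ injects $A$ into $A A^{-1} \subset U \cap \nfold{H}{2} =: B$. Hence $|A| \leq |B|$, and it is enough to prove $|B| \leq 2\alpha^2 |H|^{1/3}$.

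For the main step, I use that $H$ generates $\SL_2(\Fp_p)$ while $x\Bb_0 x^{-1} \cap \SL_2(\Fp_p)$ is a proper subgroup (being contained in the $\Fp_p$-points of a Borel of $\Gg = \SL_2(\bar{\Fp}_p)$), so some $h \in H$ satisfies $h \notin x\Bb_0 x^{-1}$. The conjugate $V := hUh^{-1}$ is then a unipotent subgroup of $\Gg$ distinct from $U$, and $U \cap V = \{1\}$ since any nontrivial unipotent element of $\SL_2(\bar{\Fp}_p)$ lies in a unique maximal unipotent subgroup. Now consider
$$
\phi : B \times B \times B \to \SL_2(\Fp_p),\quad \phi(b_1, b_2, b_3) = b_1 \cdot (h b_2 h^{-1}) \cdot b_3;
$$
its image lies in $\nfold{H}{8}$. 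I claim $\phi$ is injective on $B \times (B \setminus \{1\}) \times B$: for $b_2 \neq 1$, $v := hb_2 h^{-1}$ is a nontrivial element of $V$, and a direct matrix calculation (in a basis where $U$ is upper-triangular unipotent, so that $V \neq U$ forces the $(2,1)$-entry of $v$ to be nonzero) shows that the $(2,1)$-entry of $y = b_1 v b_3$ equals that of $v$, which then recovers $v$ uniquely, and $b_1, b_3$ are then read off from the remaining entries. This yields $(|B|-1)|B|^2 \leq |\nfold{H}{8}|$.

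Applying Ruzsa's inequality (Proposition \ref{pr-ruzsa}) gives $|\nfold{H}{8}| \leq \alpha^6 |H|$, so for $|B| \geq 2$ one has $|B|^3/2 \leq (|B|-1)|B|^2 \leq \alpha^6 |H|$, whence $|B| \leq 2^{1/3}\alpha^2 |H|^{1/3} < 2\alpha^2 |H|^{1/3}$; the case $|B| \leq 1$ is trivial since $\alpha \geq 1$ and $|H| \geq 1$. Combining with the reduction gives the stated bound. The main obstacle is the Bruhat-type injectivity in the third step — essentially that the product map $U \times (V \setminus \{1\}) \times U \to \Gg$ is injective when $U \neq V$ — and this is precisely what forces the exponent $1/3$ to emerge, reflecting the one-dimensional nature of $C_\gamma$ inside the two-dimensional conjugacy class.
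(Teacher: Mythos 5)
Your proof is correct and follows essentially the same route as the paper: reduce to bounding $|U\cap \nfold{H}{2}|$ for $U=x\Uu_0x^{-1}$ via the coset structure of $C_\gamma$, then bound that set by the injectivity of $(u_1,u_2,u_3)\mapsto u_1(hu_2h^{-1})u_3$ for $u_2\neq 1$ and $h\in H$ outside the Borel, combined with Ruzsa's inequality. The only (harmless) difference is that you work in $\SL_2(\bar{\Fp}_p)$ throughout and phrase the nonvanishing of the $(2,1)$-entry via uniqueness of the maximal unipotent subgroup containing a nontrivial unipotent, whereas the paper first disposes of the cases where $x\Bb_0x^{-1}\cap\SL_2(\Fp_p)$ is trivial or a nonsplit torus and then computes the fiber explicitly.
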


\begin{proof}
  We first deal with the fact that $x$ and $\gamma$ are not
  necessarily in $\SL_2(\Fp_p)$. We have $xC_{\gamma}x^{-1}\cap
  \SL_2(\Fp_p)\subset x\Bb_0x^{-1}\cap \SL_2(\Fp_p)$, and there are
  three possibilities for the latter: either $x\Bb_0x^{-1}\cap
  \SL_2(\Fp_p)=1$, or $x\Bb_0x^{-1}\cap \SL_2(\Fp_p)=T$ is a non-split
  maximal torus of $\SL_2(\Fp_p)$, or $x\Bb_0x^{-1}\cap
  \SL_2(\Fp_p)=B$ is an $\SL_2(\Fp_p)$-conjugate of the group
  $B_0=\Bb_0\cap \SL_2(\Fp_p)$ of upper-triangular matrices (this is
  once more a standard property of linear algebraic groups over finite
  fields; the most direct argument in this special case is probably to
  observe that we only need to know that $x\Bb_0x^{-1}\cap
  \SL_2(\Fp_p)$ is a subset of a maximal torus, or of a conjugate of
  $B$, which follows from the fact that this intersection is a
  solvable subgroup of $\SL_2(\Fp_p)$). 
\par
In the last case, we can assume that $x\in\SL_2(\Fp_p)$ and $\gamma\in
\Fp_p$. In the first, of course, there is nothing to do. And as for
the second, note that $\gamma$ and $\gamma^{-1}$ are the eigenvalues
of any element in $\SL_2(\Fp_p)\cap xC_{\gamma}x^{-1}$, and there are
at most two elements in a maximal torus with given eigenvalues. A
fortiori, we have $|H\cap xC_{\gamma}x^{-1}|\leq 2\leq 2\alpha^2
|H|^{1/3}$ in that case.
\par
Thus we are left with the situation where $x\in \SL_2(\Fp_p)$.  Using
$\SL_2(\Fp_p)$-conjugation, it is enough to deal with the case
$x=1$. Then either the intersection is empty (and the result is true)
or we can fix
$$
g_0=\begin{pmatrix}
\gamma & t_0\\
0&\gamma^{-1}
\end{pmatrix}\in H\cap C_{\gamma},
$$
and observe that for any $g\in H\cap C_{\gamma}$, we have
$$
g_0^{-1}g\in \nfold{H}{2}\cap C_1,
$$
hence
$$
|H\cap C_{\gamma}|\leq |\nfold{H}{2}\cap C_1|=|\nfold{H}{2}\cap
\Uu_0|,
$$
which reduces further to the case $\gamma=1$.
\par
In that case we have another case of the Larsen-Pink non-concentration
inequality, in that case in a one-dimensional variety.  There is here
also a rather short proof: we fix any element $h\in H$ such that $h$
is \emph{not} in $\Bb_0$, i.e.
$$
h=\begin{pmatrix}a&b\\c&d
\end{pmatrix}
$$
with $c\not=0$. This element exists, because otherwise $H\subset
\Bb\cap \SL_2(\Fp_p)$ would not be a generating set of $\SL_2(\Fp_p)$.
\par
Now consider the multiplication map
$$
\psi\,:\, \lmap{\Uu^*\times\Uu^*\times
  \Uu^*}{\Gg}{(u_1,u_2,u_3)}
{u_1hu_2h^{-1}u_3}
$$
where $\Uu^*=\Uu_0-1$ (we explain below why we do not use $\Uu_0^3$ as
domain).
\par
Note that since $h\in H$, we have $\psi((\Uu^*\cap
\nfold{H}{2})^3)\subset \nfold{H}{8}$. Crucially, we claim that for
\emph{any} $x\in \Gg$, the fiber $\psi^{-1}(x)$ is either empty or
reduced to a single element! If this is true, we get as before
$$
|\Uu^*\cap \nfold{H}{2}|^3\leq |\nfold{H}{8}|\leq \alpha^{6}|H|,
$$
and therefore
$$
|\Uu_0\cap \nfold{H}{2}|=|\Uu^*\cap \nfold{H}{2}|+1\leq
2\alpha^2|H|^{1/3},
$$
which is the result.
\par
To check the claim, we compute. Precisely, if
$$
u_i=\begin{pmatrix}1&t_i\\0&1
\end{pmatrix}\in \Uu^*,
$$
a matrix multiplication leads to
$$
\psi(u_1,u_2,u_3)=
\begin{pmatrix}
1-t_1t_2c^2-t_2ac&\star\\
-t_2c^2&\star
\end{pmatrix},
$$
and in order for this to be a fixed matrix $x$, we see that $t_2$
(i.e., $u_2$) is uniquely determined (since $c\not=0$). Since $u_2$ is
in $\Uu^*$, it is not $1$, and this means that $t_2\not=0$ (ensuring
this is the reason that $\psi$ is defined using $\Uu^*$ instead of
$\Uu_0$). Thus $t_1$ (i.e. $u_1$) is also uniquely determined, and
finally
$$
u_3=(u_1hu_2h^{-1})^{-1}x
$$
is uniquely determined.
\end{proof}


\begin{proof}[Proof of Theorem~\ref{th-lp}]
  We have $g$ regular semisimple with $\Tr(g)\not=0$. We define as
  above the map $\phi$ and denote
$$
Z=(\bcl(g)\cap H)^3,\quad 
W=\phi(Z)=\phi((\bcl(g)\cap H)^3),
$$
so that
\begin{equation}\label{eq-conj-h}
|\bcl(g)\cap H|^3=\sum_{(y_1,y_2)\in W}{ |\phi^{-1}(y_1,y_2)\cap Z|}
=S_0+S_1+S_2,
\end{equation}
where $S_i$ denotes the sum restricted to a subset $W_i\subset W$,
$W_0$ being the subset where the fiber has order at most $2$, while
$W_1$, $W_2$ correspond to those $(y_1,y_2)$ where cases (1) and (2)
of Lemma~\ref{lm-pink} hold. Precisely, we do not put into $W_2$ the
$(y_1,y_2)$ for which both cases (1) and (2) are valid, e.g., $y_1=1$,
and we \emph{add} to $W_1$ the cases where $y_1=-1$, which may
otherwise appear in Case (2). We will prove:
\begin{gather*}
  S_0\leq 2|\nfold{H}{2}|^2\leq 2\alpha^2|H|^2,\quad\quad
  S_1\leq 4|\nfold{H}{2}|^2\leq 4\alpha^2 |H|^2,\\
  S_2\leq 32\alpha^{34/3}|H|^{5/3}.
\end{gather*}
\par
Assuming this, we get immediately
$$
|\bcl(g)\cap H|\leq 6^{2/3}\alpha^{2/3}|H|^{2/3}+2^{5/3}\alpha^{34/9}
|H|^{5/9}
$$
from~(\ref{eq-conj-h}). Now either the second term is smaller than the
first, and we get~(\ref{eq-lp1}) (since $2\cdot 6^{2/3}<7$), or 
$$
2^{5/3}\alpha^{34/9}|H|^{5/9}>6^{2/3}\alpha^{2/3}|H|^{2/3}>
2^{5/3}\alpha^{2/3}|H|^{2/3},
$$
which gives 
$$
\alpha>|H|^{1/28},
$$
the second alternative~(\ref{eq-lp2}) of Theorem~\ref{th-lp}, which is
therefore proved.
\par
We now check the bounds on $S_i$.  The case of $S_0$ follows by the
fact that the fibers over $W_0$ have at most two elements, hence also
their intersection with $Z$, and that $|W_0|\leq |W|\leq
|\nfold{H}{2}|^2$.
\par
The case of $S_1$ splits into four almost identical subcases,
corresponding to $y_1=1$, $y_1=-1$ (remember that we added this,
borrowing it from Case (2)), $y_2=1$ or $y_1=y_2$. We deal only
with the first, say $S_{1,1}$: we have
$$
S_{1,1}\leq \sum_{y_2\in \nfold{H}{2}}{
|\phi^{-1}(1,y_2)\cap Z|
}.
$$
\par
But using Lemma~\ref{lm-biject-lp}, we have
$$
|\phi^{-1}(1,y_2)\cap Z|
=|\{(x_1,x_1^{-1},x_1^{-1}y_2)\in (\bcl(g)\cap H)^3\}|
\leq |\nfold{H}{3}|
$$
for any given $y_2\in \nfold{H}{2}$, since $x_1\in H$ determines the
triple $(x_1,x_1^{-1},x_1^{-1}y_2)$. Therefore
$$
S_{1,1}\leq |\nfold{H}{2}||H|\leq |\nfold{H}{2}|^2,
$$
and similarly for the other three cases.
\par
Now for $S_2$. Here also we sum over $y_1$ first, which is $\not=\pm
1$ (by our definition of $W_2$).
The crucial point is then that an element $y_1\not=\pm1$ is included
in at most two conjugates of $\Bb_0$. Hence, up to a factor $2$, the
choice of $y_1$ fixes that of the relevant conjugate $\Bb$ for which
Case (2) applies. Next we observe that $C_{\Bb}=\bcl(g)\cap \Bb$ is a
conjugate of the union
$$
C_{\alpha}\cup C_{\alpha^{-1}},
$$
where, as in Lemma~\ref{lm-subkey}, we define
$$
C_{\alpha}=\Bigl\{
\begin{pmatrix}
\alpha & t\\
0&\alpha^{-1}
\end{pmatrix}\,\mid\, t\in\bar{\Fp}_p \Bigr\},
$$
and $\alpha$ is such that $\alpha+\alpha^{-1}=\Tr(g)$. Given $y_1\in
\nfold{H}{2}$ and $\Bb$ containing $y_1$, we have by~(\ref{eq-y2})
$$
y_2\in (\nfold{H}{2}\cap \Uu)\cup (\nfold{H}{2}\cap t^2\Uu)
$$
for some $t\in C_{\Bb}$. We note that $t^2\Uu$ is itself conjugate to
$C_{\alpha^2}$ or $C_{\alpha^{-2}}$.
\par
Then the size of the fiber $\phi^{-1}(y_1,y_2)\cap Z$ is determined by
the number of possibilities for $x_1$. As the latter satisfies
$$
x_1\in C_{\Bb}\cap H,
$$
we see that we must estimate the size of intersections of the type
$$
H\cap C_{\gamma},\ \nfold{H}{2}\cap C_{\gamma}
$$
for some fixed $\gamma\in\Fp_p^{\times}$, as this will lead us to
estimates for the number of possibilities for $y_2$ as well as
$x_1$. Using twice Lemma~\ref{lm-subkey}, we get
$$
|\{y_2\,\mid\, (y_1,y_2)\in W_2\}|\leq
8\trp{\nfold{H}{2}}^2|\nfold{H}{2}|^{1/3}\leq 
8\alpha^{25/3}|H|^{1/3},
$$
(the factor $8$ accounts for the two possible choices of $\Bb$ and the
two ``components'' for $y_2$, and the factor $2$ in the lemma) and
$$
|\phi^{-1}(y_1,y_2)\cap Z|\leq 4 \alpha^2|H|^{1/3}.
$$
\par
This gives
$$
S_2\leq 32\alpha^{31/3}|H|^{2/3}|\nfold{H}{2}| \leq
32\alpha^{34/3}|H|^{5/3},
$$
as claimed.
\end{proof}

There now only remains to prove Lemma~\ref{lm-pink}.

\begin{proof}[Proof of Lemma~\ref{lm-pink}]
  It will be convenient to compute the intersection $C\cap
  y_1^{-1}C\cap y_2^{-1}C$ instead of $C\cap y_1C\cap y_2C$, a change
  of notation whichs is innocuous.  
\par
The computation is then based on a list of simple checks. We can
assume that the regular semisimple element $g$ is
$$
g=\begin{pmatrix}
\alpha&0\\
0&\alpha^{-1}
\end{pmatrix}
$$
where $\alpha^4\not=1$, because $\alpha=\pm 1$ implies that $g$ is not
regular semisimple, and $\alpha$ a fourth root of unity implies that
$\Tr(g)=0$, which is the third case of the lemma (recall that $k$ is
assumed to be algebraically closed). Thus the conjugacy class $C$ is
the set of matrices of trace equal to $t=\alpha+\alpha^{-1}$.
\par
The only trick involved is that, for any $y_1\in \SL_2(k)$ and $x\in
\SL_2(k)$, we have
$$
C\cap (xy_1x^{-1})^{-1}C=x(x^{-1}C\cap y_1^{-1}x^{-1}C)=x(C\cap
y_1^{-1}C)x^{-1}
$$
since $x^{-1}C=Cx^{-1}$, by definition of conjugacy classes. This
means we can compute $C\cap y_1^{-1}C$, up to conjugation, by looking
at $C\cap (y_1')^{-1}C$ for any $y'_1$ in the conjugacy class of
$y_1$. In particular, of course, determining whether $C\cap y_1^{-1}C$
is infinite or not only depends on the conjugacy class of $y_1$.
\par
The conjugacy classes in $\SL_2(k)$ are well-known. We will run
through representatives of these classes in order, and determine the
corresponding intersection $C\cap y_1^{-1}C$. Then, to compute $C\cap
y_1^{-1}C\cap y_2^{-1}C$, we take an element $x$ in $C\cap y_1^{-1}C$,
compute $y_2x$, and $C\cap y_1^{-1}C\cap y_2^{-1}C$ corresponds to
those $x$ for which the trace of $y_2x$ is also equal to $t$.
\par
We assume $y_1\not=\pm 1$. Then we distinguish four cases:
\begin{gather}
y_1=\begin{pmatrix}1&1\\0&1
\end{pmatrix},\quad
y_1=\begin{pmatrix}-1&1\\0&-1
\end{pmatrix},\nonumber\\
y_1=\begin{pmatrix}\beta&0\\0&\beta^{-1}
\end{pmatrix},\quad \beta\not=\pm 1,\beta\not=\alpha^{\pm 2}
\label{eq-y1-semisimple}\\
y_1=\begin{pmatrix}\alpha^2&0\\0&\alpha^{-2}
\end{pmatrix}.\nonumber
\end{gather}
\par
We claim that $D=C\cap y_1^{-1}C$ is then given, respectively, by the
sets containing all matrices of the following forms, parameterized by
an element $a\in k$ (with $a\not=0$ in the third case):
\begin{gather}
\begin{pmatrix}
\alpha & a\\0&\alpha^{-1}
\end{pmatrix}\text{ or }
\begin{pmatrix}
\alpha^{-1} & a\\0&\alpha
\end{pmatrix},\label{eq-first-form}\\
\begin{pmatrix}
a & (-a^2+at-1)/(2t)\\
2t & t-a
\end{pmatrix},\nonumber
\end{gather}
\begin{gather}
\frac{1}{\beta+1}\begin{pmatrix}
t & (\beta-\alpha^2)a\\
-(\beta-\alpha^{-2})a^{-1} & t\beta
\end{pmatrix},\label{eq-second-form}
\\
\begin{pmatrix}
\alpha^{-1} & a\\0&\alpha
\end{pmatrix}\text{ or }
\begin{pmatrix}
\alpha^{-1} & 0\\a&\alpha
\end{pmatrix}.\label{eq-last-form}
\end{gather}
\par
Let us check, for instance, the third and fourth cases (cases (1) and
(2) are left as exercise), which we can do simultaneously, taking
$y_1$ as in~(\ref{eq-y1-semisimple}) but without assuming $\beta\not=
\alpha^{\pm 2}$. For
$$
x=\begin{pmatrix}a&b\\c&d
\end{pmatrix}\in C,
$$
we compute
$$
y_1x=\begin{pmatrix}
\beta a& \beta b\\
\beta^{-1} c& \beta^{-1}d.
\end{pmatrix}
$$
\par
This matrix belongs to $C$ if and only if $\beta
a+\beta^{-1}d=t=a+d$. This means that $(a,d)$ is a solution of the
linear system
$$
\begin{cases}
a+d = t\\
\beta a+\beta^{-1} d=t,
\end{cases}
$$
of determinant $\beta^{-1}-\beta\not=0$, so that we have
$$
a=\frac{t}{\beta+1},\quad\quad d=\frac{\beta t}{\beta+1}. 
$$
\par
Write $c=c'/(\beta+1)$, $b=b'/(\beta+1)$; then the condition on $c'$
and $b'$ to have $\det(x)=1$ can be expressed as
$$
-c'b'=(\beta-\alpha^2)(\beta-\alpha^{-2}).
$$
\par
This means that either $\beta$ is not one of $\alpha^2$, $\alpha^{-2}$
(the third case), and then $c$ and $d$ are non-zero, and we can
parametrize the solutions as in~(\ref{eq-second-form}), or else (the
fourth case) $c$ or $d$ must be zero, and then we get upper or
lower-triangular matrices, as described in~(\ref{eq-last-form}).
\par
Now we intersect $D$ (in the general case again) with $y_2^{-1} C$. We
write
$$
y_2=\begin{pmatrix}
x_1&x_2\\
x_3&x_4
\end{pmatrix}.
$$
\par
We consider the first of our four possibilities now, so that $x\in D$
is upper-triangular with diagonal coefficients $\alpha$, $\alpha^{-1}$
(as a set), see~(\ref{eq-first-form}). We compute the trace of $y_2x$,
and find that is
$$
ax_3+x_1\alpha+x_4\alpha^{-1},\text{ or }
ax_3+x_1\alpha^{-1}+x_4\alpha.
$$
\par
Thus, if $x_3\not=0$, there is at most one value of $a$ for which the
trace is $t$, i.e., $D\cap y_2^{-1}C$ has at most two  elements (one
for each form of the diagonal). If $x_3=0$, we find that $x_1$ is a
solution of
$$
\alpha x_1+\alpha^{-1}x_1^{-1}=t,
$$
or 
$$
\alpha x_1^{-1}+\alpha^{-1} x_1=t,
$$
for which the solutions are among $1$, $\alpha^2$ and $\alpha^{-2}$,
so that $y_2$ is upper-triangular with diagonal coefficients $(1,1)$,
$(\alpha^2,\alpha^{-2})$ or $(\alpha^{-2},\alpha^2)$, and this is one
of the instances of Case (2) of Lemma~\ref{lm-pink}.
\par
Let us now consider the second of our four cases, leaving this time
the third and fourth to the reader. Thus we take $x$ as
in~(\ref{eq-second-form}), and compute the trace of $y_2x$ as a
function of $a$, which gives
$$
\Tr(y_2x)=-\frac{x_3}{2t}a^2+\Bigl(x_1-x_4+\frac{x_3}{2}\Bigr)a+
(x_4+2x_2)t.
$$
\par
The equation $\Tr(y_2x)=t$ has therefore at most two solutions, unless
$x_3=0$ and $x_4=x_1$. In that case we have $x_4=1$, and the constant
term is equal to $t$ if and only if $x_4=1$ and $x_2=0$ (so $y_2=1$)
or $x_4=\pm 1$ and $x_2=1$ (and then $y_2=y_1$). Each of these
possibilities corresponds to the exceptional situation of Case (1) of
Lemma~\ref{lm-pink}.
\par
All in all, going through the remaining situations, we finish the
proof.
\end{proof}

\subsection{Proof of Helfgott's Theorem}

We now prove Theorem~\ref{th-helfgott}.  If $p\leq 5$, one checks
numerically that trivial bounds already imply the theorem. So we
assume that $p\geq 7$, which means that Lemma~\ref{lm-escape} is
applicable. We will show that
\begin{equation}\label{eq-precise-goal}
\trp{H}\geq 2^{-1/2} |H|^{1/1512}
\end{equation}
for $p\geq 7$, unless $\nfold{H}{3}=\SL_2(\Fp_p)$, where the latter
case will arise by applying Proposition~\ref{pr-qr}. Then using
Lemma~\ref{lm-small-p}, we derive
$$
\trp{H}\geq \max(2^{1/2},2^{-1/2} |H|^{1/1512})\geq |H|^{1/3024},
$$
which is the precise form of Helfgott's Theorem we claimed.
\par 
We define $\tilde{H}=\nfold{H}{2}$, so that (by Lemma~\ref{lm-escape})
there exists at least one maximal torus $\Tt$ involved with
$\nfold{H}{3}$, hence a fortiori involved with
$L=\nfold{\tilde{H}}{2}=\nfold{H}{4}$.
\par
If, among all maximal tori involved with $L$, there is one for which
the lower bound~(\ref{eq-key-ineq}) (applied to $H=L$) fails, we
obtain from Proposition~\ref{pr-dichotomy} the lower bound
$$
\trp{L}\geq |L|^{1/168}\geq |H|^{1/168},
$$
and since $\trp{L}\leq \alpha^9$ by Ruzsa's Lemma, we get
\begin{equation}\label{eq-first-tripling-bound}
  \alpha\geq |H|^{1/1512}\geq 2^{-1/2} |H|^{1/1512},
\end{equation}
which is~(\ref{eq-precise-goal}).
\par
Otherwise, we distinguish two cases.
\par
\medskip
\par
\underline{Case (1).} There exists a maximal torus $\Tt$ involved with
$L$ such that, for any $g\in G$, the torus $g\Tt g^{-1}$ is involved
with $L$.
\par
As we can guess from~(\ref{eq-key-ineq})
and~(\ref{eq-dist-tori-inter}), in that case, the set $L$ will tend to
be rather large, so $|L|$ is close to $|G|$, \emph{unless} the
tripling constant is itself large enough. 
\par
Precisely, writing $T=\Tt\cap G$, we note that the maximal tori
$$
gTg^{-1}=(g\Tt g^{-1})\cap G
$$
are distinct for $g$ taken among representatives of $G/N_G(T)$. Then
we have the inequalities
$$
|\nfold{L}{2}|\geq \sum_{g\in G/N_G(T)}{ |\nfold{L}{2}\cap g\Tt_{reg}
  g^{-1}| }\geq 7^{-1}\beta^{-4}|L|^{1/3}\frac{|G|}{|N_G(T)|}
$$
where $\beta=\trp{L}$, since each $g\Tt g^{-1}$ is involved with $L$
and distinct regular semisimple elements lie in distinct maximal tori
(and we are in a case where~(\ref{eq-key-ineq}) holds for all tori
involved with $L$).
\par
Now we unwind this inequality in terms of $H$ and $\alpha=\trp{H}$. We
have $\nfold{L}{2}=\nfold{H}{8}$, so
$$
|H|\geq \alpha^{-6}|\nfold{L}{2}|\geq
14^{-1}\alpha^{-6}\beta^{-4}(p-1)^2|L|^{1/3} \geq
14^{-1}\alpha^{-6}\beta^{-4}(p-1)^2|H|^{1/3}
$$
by Ruzsa's Lemma. Furthermore, we have 
$$
\beta=\trp{L}=\trp{\nfold{H}{4}}\leq \alpha^{10}
$$
by Ruzsa's Lemma again, and hence the inequality gives the bound
$$
|H|\geq 14^{-3/2}\alpha^{-69}(p-1)^3,
$$
which for $p\geq 5$ implies $|H|\geq 100^{-1}\alpha^{-69}|G|$.  But
then either
\begin{equation}\label{eq-bound-1}
  \trp{H}=\alpha\geq 200^{-1/69}|G|^{1/621}\geq 2^{-1/2}
  |H|^{1/621},
\end{equation}
or else
$$
|H|\geq 2|G|^{8/9},
$$
which (via Proposition~\ref{pr-qr}) are versions of the two
alternatives we are seeking (in particular the first
implies~(\ref{eq-precise-goal}).)
\par
\medskip
\par
\par
\underline{Case (2).} Since we know that \emph{some} torus is involved
with $L$, the complementary situation to Case (1) is that there exists
a maximal torus $\Tt$ involved with $L=\nfold{H}{4}$ and a conjugate
$g\Tt g^{-1}$, for some $g\in G$, which is \emph{not} involved with
$L$. We are then going to get growth using
Lemma~\ref{lm-intersection}. There is a first clever observation (the
idea of which goes back to work of Glibichuk and Konyagin~\cite{gk} on
the ``sum-product phenomenon''): one can assume, possibly after
changing $\Tt$ and $g$, that $g$ is in $H$.
\par
Indeed, to check this claim, we start with $\Tt$ and $h$ as
above. Since $H$ is a generating set, we can write
$$
g=h_1\cdots h_m
$$
for some $m\geq 1$ and some elements $h_i\in H$. Now let $i\leq m$
be the smallest index such that the maximal torus
$$
\Tt'=(h_{i+1}\cdots h_m)\Tt (h_{i+1}\cdots h_m)^{-1}
$$
is involved with $L$. Taking $i=m$ means that $\Tt$ is involved with
$L$, which is the case, and therefore the index $i$ exists. Moreover
$i\not=0$, again by definition. It follows that
$$
(h_ih_{i+1}\cdots h_m)\Tt (h_ih_{i+1}\cdots h_m)^{-1}
$$
is not involved with $L$. But this means that we can replace $(\Tt,g)$
with $(\Tt',h_i)$, and since $h_i\in H$, this gives us the claim.
\par
We now write $h$ for the conjugator $g$ such that $L$ and the torus
$\Ss=g\Tt g^{-1}=h\Tt h^{-1}$ are not involved. Apply
Lemma~\ref{lm-intersection} with $(H,K)=(\tilde{H},h\Tt h^{-1}\cap G)$
and $n=5$. This gives
$$
\frac{|\nfold{\tilde{H}}{6}|}{|\tilde{H}|} \geq
\frac{|\nfold{\tilde{H}}{5}\cap S|}{|\nfold{\tilde{H}}{2}\cap S|}.
$$
\par
But since $L=\nfold{\tilde{H}}{2}$ and $\Ss$ are not involved (by
construction), we have $|\nfold{\tilde{H}}{2}\cap S|\leq 2$, by the
easy part of the Key Proposition~\ref{pr-dichotomy}, and therefore
$$
\frac{|\nfold{\tilde{H}}{6}|}{|\tilde{H}|} \geq
\frac{1}{2}|\nfold{\tilde{H}}{5}\cap \Ss|.
$$
\par
However, $L$ and $\Tt$ \emph{are} involved, and moreover
$$
h(\nfold{H}{8}\cap \Tt)h^{-1}
\subset \nfold{H}{10}\cap \Ss=\nfold{\tilde{H}}{5}\cap \Ss,
$$
so that
$$
|\nfold{\tilde{H}}{5}\cap \Ss|
\geq |\nfold{H}{8}\cap T|=|\nfold{L}{2}\cap T|\geq
14^{-1}\tilde{\alpha}^{-4}|L|^{1/3}
$$
where $\tilde{\alpha}=\trp{L}$, by the Key
Proposition~\ref{pr-dichotomy} (again, because~(\ref{eq-key-ineq})
holds for all tori involved with $L$).
\par
Thus
$$
\frac{|\nfold{\tilde{H}}{6}|}{|\tilde{H}|}
\geq 28^{-1}\tilde{\alpha}^{-4}|H|^{1/3},
$$
which translates to 
$$
\alpha^{10}|H|\geq 28^{-1}\alpha^{-36}|H|^{4/3},
$$
by Ruzsa's Lemma. This is a rather stronger bound for $\alpha$ than
before, namely
\begin{equation}\label{eq-bound-2}
  \alpha=\trp{H}\geq 28^{-1/46}|H|^{1/138}\geq 2^{-1/2} |H|^{1/138}.
\end{equation}
\par
To summarize, we have obtained three possible lower bounds of the
right kind for $\alpha$,
namely~(\ref{eq-first-tripling-bound}),~(\ref{eq-bound-1})
and~(\ref{eq-bound-2}), one of which holds if
$\nfold{H}{3}\not=\SL_2(\Fp_p)$. All imply~(\ref{eq-precise-goal}),
and hence we are done.

\subsection{Diameter bound}

Corollary~\ref{cor-babai} is a well-known consequence of the growth
theorem: by induction on $j\geq 1$, we see using Helfgott's Theorem
that given a symmetric generating set $S\subset G=\SL_2(\Fp_p)$,
either $\diam\cayley{G}{S}\leq 3^j$, or
$$
|\nfold{H}{3^j}|\geq |H|^{(1+\delta)^j}
$$
where $H=S\cup \{1\}$.  For
$$
j=\Bigl\lceil \frac{\log \log |G|}{\log(1+\delta)}\Bigr\rceil,
$$
the second alternative is impossible, and hence
$$
\diam \cayley{G}{S}\leq 3^j\leq 3(\log |G|)^{(\log 3)/\log(1+\delta)},
$$
which gives the result since $(\log 3)/\log(1+1/3024)\leq 3323$.

\section{The Bourgain-Gamburd method}

The method of Bourgain and Gamburd~\cite{bg} leads, from Helfgott's
growth theorem, to a proof that the Cayley graphs modulo primes of a
Zariski-dense subgroup of $\SL_2(\Zz)$ form an expander
family. Applying this method straightforwardly with explicit estimates
(as done in~\cite[Ch. 4]{expanders}), one obtains explicit expansion
bounds (either for the spectral gap of the combinatorial Laplace
operator, or for the discrete Cheeger constant). However, these
constants are typically very small. 

\subsection{The $L^2$-flattening inequality}

This section applies -- in principle -- to all finite groups, and the
basic expansion criterion that we derive (Corollary~\ref{cor-bg},
following essentially Bourgain and Gamburd) is also of independent
interest.
\par
In rough outline -- and probabilistic language --, the idea is to show
that if two independent $\SL_2(\Fp_p)$-valued symmetrically
distributed random variables $X_1$ and $X_2$ are not too concentrated,
but also not very uniformly distributed on $\SL_2(\Fp_p)$, then their
product $X_1X_2$ will be significantly more uniformly distributed,
\emph{unless} there are obvious reasons why this should fail to
hold. These exceptional possibilities can then be handled separately.
\par
Applying this to some suitable step $X_k$ of the random walk (where
the initial condition is obtained by different means), this result
leads to successive great improvements of the uniformity of the
distribution for $X_{2k}$, $X_{4k}$, \ldots, $X_{2^jk}$, until the
assumptions of the lemma fail. In that situation, the index $m=2^jk$
is of size about $\log |G|$, and $\proba(X_{2m}=1)$ gives a suitable
upper-bound on the number of cycles to obtain expansion, by a variant
of what might be called the Huxley-Sarnak-Xue method
(see~\cite{huxley} and~\cite{sarnak-xue}), as we now recall.

\begin{remark}
  In an earlier draft, we had claimed a much better bound (roughly
  exponentially better) by using non-dyadic steps, but this was due to
  a bad mistake which was pointed out by the referee, which we
  heartily thank once more.
\end{remark}

For a finite group $G$, we denote my $\mdim{G}$ the minimal dimension
of a non-trivial irreducible unitary representation of $G$. Moreover,
if $X$ is a $G$-valued symmetrically-distributed random variable, we
define the \emph{return probability} $\rp{X}$ by
$$
\rp{X}=\proba(X_1X_2=1),
$$
where $(X_1,X_2)$ are independent random variables with the same
distribution as $X$, or equivalently
$$
\rp{X}=\sum_{g\in G}{\proba(X=g)^2}.
$$
\par
Let $S$ be a symmetric generating subset of $G$ and
$\Gamma=\cayley{G}{S}$ the associated Cayley graph.  The Markov
operator $M$ acts on functions on $G$ by
$$
M\varphi(g)=\frac{1}{|S|}\sum_{s\in S}{\varphi(gs)},
$$
and it is a self-adjoint operator. The spectral gap of $G$, as we
normalize it, is equal to $1-\rho^+_{\Gamma}$, where $\rho^+_{\Gamma}$
is the largest eigenvalue of $M$, and it is therefore $\geq
1-\rho_{\Gamma}$, where $\rho_{\Gamma}$ is the spectral radius of $M$.
\par
By expressing spectrally the number of closed walks of length $2m$
from the origin in $\Gamma$, and relating the latter with the return
probability $\rp{X_m}$, where $(X_m)$ is the random walk on the graph
governed by $M$, one gets
$$
\sum_{\rho}{\rho^{2m}}=\frac{1}{|G|}\rp{X_m}.
$$
\par
Using positivity and the fact that $G$ acts without invariant vector
on the $\rho_{\Gamma}$-th eigenspace of $M$, it follows that
$$
\mdim{G}\rho_{\Gamma}^{2m}\leq \frac{1}{|G|}\rp{X_m},
$$
or in other words, we have a bound for the spectral radius in terms of
the return probability: for any $m\geq 1$, we have
\begin{equation}\label{eq-spectral}
1-\lambda_1(\Gamma)\leq
\rho_{\Gamma}\leq \Bigl(\frac{|G|}{\mdim{G}}\rp{X_m}\Bigr)^{1/(2m)}.
\end{equation}
\par
We consider now two independent (not necessarily
identically-distributed) $G$-valued random variables $X_1$, $X_2$ and
let
$$
\rpp{X_1,X_2}=\max(\rp{X_1},\rp{X_2}).
$$
\par
We attempt to bound $\rp{X_1X_2}$ in terms of $\rpp{X_1,X_2}$.  To do
this while still remaining at a level of great generality, the
following definition will be useful:

\begin{definition}[Flourishing]\label{def-delta-control}
  For $\delta>0$, a finite group $G$ is $\delta$-flourishing if any
  symmetric subset $H\subset G$, containing $1$, which generates $G$
  and has tripling constant $\trp{H}<|H|^{\delta}$ satisfies
  $\nfold{H}{3}=G$. 
\end{definition}

In particular, Theorem~\ref{th-helfgott} states that all groups
$\SL_2(\Fp_p)$, for $p$ prime, are $1/3024$-flourishing.
\par
We will prove a general $L^2$-flattening theorem, which may be of
general interest. In order to somehow streamline the proof, we do not
explicitly describe here what ``$G$ large enough'' means. However, all
relevant steps where a condition on the size of $G$ occurs are clearly
marked, and in the second part of Section~\ref{ssec-summary}, we will
look back to express these as explicit inequalities.

\begin{theorem}[$L^2$-flattening conditions]\label{th-l2-flatten}
  Let $G$ be a finite group which is $\delta$-flourishing for some
  $\delta$ with $0<\delta\leq 1$. Let $X_1$, $X_2$ be symmetric
  independent $G$-valued random variables.
\par
Let $0<\gamma<1$ be given, and assume that
\begin{equation}\label{eq-bg-cond3}
  \proba(X_1\in xH)\leq |G|^{-\gamma}
\end{equation}
for all proper subgroups $H\subset G$ and all $x\in G$.
\par
Then for any $\eps>0$, there exists $\delta_1>0$ and $c_3>0$,
depending only on $\eps$, $\delta$ and $\gamma$, such that
\begin{equation}\label{eq-large-enough-rpp}
\rp{X_1X_2}\leq c_3\max\{\frac{1}{|G|^{1-\eps}},
\frac{\rpp{X_1,X_2}}{|G|^{\delta_1}}\Bigr\}
\end{equation}
when $|G|$ is large enough in terms of $(\eps,\delta,\gamma)$.
\par
More precisely, one may take
\begin{equation}\label{eq-delta1}
  \delta_1=\frac{1}{2}\min\Bigl(\frac{\delta\gamma}{2c_2+1},
  \frac{\eps}{2c_2}
  \Bigr)
\end{equation}
where $c_2=937$ is as in Theorem~\ref{th-large-energy} and
$$
c_3\leq 2^{14}c_1\leq 2^{2438}.
$$
\end{theorem}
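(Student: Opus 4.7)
The plan is to argue by contradiction: assume $\rp{X_1 X_2}$ strictly exceeds both $c_3|G|^{\eps-1}$ and $c_3\rpp{X_1,X_2}|G|^{-\delta_1}$, and derive an obstruction from either the $\delta$-flourishing hypothesis on $G$ or the non-concentration assumption~(\ref{eq-bg-cond3}). Writing $\mu_i$ for the law of $X_i$, I would express $\rp{X_1 X_2} = \|\mu_1*\mu_2\|_2^2$ as a weighted multiplicative energy over quadruples with $a_1b_1 = a_2b_2$, and decompose each $\mu_i$ into dyadic level sets $A_{i,k} = \{g : 2^{-k} < \mu_i(g) \leq 2^{-k+1}\}$ for $1 \leq k \leq K\approx 2\log_2|G|$; the tail $\{g:\mu_i(g)\leq |G|^{-2}\}$ contributes only $O(|G|^{-2})$ to $\|\mu_1*\mu_2\|_2^2$, absorbed into the $|G|^{\eps-1}$ branch. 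Pigeonholing both over the $O(K^2)$ pairs of levels and over which $k$ carries mass $\mu_i(A_{i,k}) \geq 1/K$, I extract sets $A_1 = A_{1,k_1}$ and $A_2 = A_{2,k_2}$ with $|A_i|\asymp 2^{k_i}$ up to a factor of $K$, on which a $K^{-O(1)}$ fraction of $\|\mu_1*\mu_2\|_2^2$ is concentrated.

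The pointwise bound $\mu_{i,k_i}\leq 2^{-k_i+1}\mathbf{1}_{A_i}$ then converts this localized $L^2$-mass into a lower bound on the normalized multiplicative energy
\[
e(A_1,A_2) = \frac{E(A_1,A_2)}{(|A_1||A_2|)^{3/2}} \geq \frac{1}{\alpha},\qquad \alpha\leq K^{O(1)}|G|^{\delta_1}.
\]
Theorem~\ref{th-large-energy} then produces a $\beta_1$-approximate subgroup $\app{H}\subset G$ and elements $x,y\in G$ with $|\app{H}|\asymp |A_1|\asymp |A_2|$, $|A_1\cap x\app{H}|\geq |A_1|/\beta_3$, $|A_2\cap\app{H}y|\geq |A_2|/\beta_3$, and $\trp{\app{H}}\leq \beta_4$, each $\beta_i\leq c_1\alpha^{c_2}\leq c_1 K^{O(1)}|G|^{c_2\delta_1}$.

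The argument closes with a dichotomy on $H_0 = \langle\app{H}\rangle\subset G$. If $H_0\subsetneq G$, then $A_1\cap x\app{H}\subset xH_0$ and
\[
\proba(X_1\in xH_0) \geq \mu_1(A_1\cap x\app{H}) \geq \frac{|A_1|}{\beta_3}\cdot 2^{-k_1} \geq \frac{1}{2K\beta_3};
\]
the choice $\delta_1\leq \delta\gamma/(2c_2+1)$ from~(\ref{eq-delta1}) makes this exceed $|G|^{-\gamma}$ once $|G|$ is large, contradicting~(\ref{eq-bg-cond3}). Otherwise $H_0 = G$, so $\app{H}$ is a symmetric generating set of $G$ containing $1$; the choice $\delta_1\leq\eps/(2c_2)$ forces $\beta_4 < |\app{H}|^\delta$ (because the first branch being violated prevents $|\app{H}|$ from being too small), and the $\delta$-flourishing hypothesis yields $\nfold{\app{H}}{3} = G$, whence $|\app{H}|\geq |G|/\beta_4\geq |G|^{1-c_2\delta_1-o(1)}$. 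Feeding this near-saturation back through the BSG intersection bounds forces $\mu_1*\mu_2$ to be close to uniform on $G$, so that $\rp{X_1X_2} = O(|G|^{-1+\eps})$, contradicting the first branch. The main obstacle is bookkeeping the polylog losses in $K$ and the $c_2$-th-power blow-up through Theorem~\ref{th-large-energy}, so as to arrive at the explicit exponents~(\ref{eq-delta1}) and the absolute constant $c_3 \leq 2^{14}c_1$.
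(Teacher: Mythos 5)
Your architecture is the same as the paper's: dyadic level sets, reduction of the localized $L^2$-mass to multiplicative energy, Theorem~\ref{th-large-energy} to extract an approximate subgroup $\app{H}$, then the dichotomy between $\langle\app{H}\rangle$ proper (contradicting~\eqref{eq-bg-cond3}) and $\langle\app{H}\rangle=G$ (where $\delta$-flourishing forces $\nfold{\app{H}}{3}=G$ and near-uniformity). The contradiction framing versus the paper's direct summation over the sets $P_\alpha\supset R_\alpha$ is cosmetic. However, one step as you justify it would fail.

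To invoke $\delta$-flourishing you must verify $\trp{\app{H}}\leq\beta_4<|\app{H}|^{\delta}$, and since $\beta_4$ can be as large as $c_1|G|^{c_2\delta_0}$ this requires a \emph{lower} bound on $|\app{H}|$ that is a positive power of $|G|$. You attribute this to ``the first branch being violated prevents $|\app{H}|$ from being too small,'' tied to $\delta_1\leq\eps/(2c_2)$. That mechanism is backwards: the hypothesis $\rp{X_1X_2}>c_3|G|^{\eps-1}$, fed through the trivial energy bound $2^{-2(k_1+k_2)}E(A_1,A_2)\leq 8\cdot 2^{-k_1}$, yields an \emph{upper} bound on $2^{k_1}$ (hence on $|A_1|$), not a lower bound, and so gives no floor on $|\app{H}|$. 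The correct source is the non-concentration hypothesis~\eqref{eq-bg-cond3} applied to the \emph{trivial} subgroup $H=\{1\}$: $\max_g\proba(X_1=g)\leq|G|^{-\gamma}$ forces $|A_1|\geq|G|^{\gamma}\proba(X_1\in A_1)\geq|G|^{\gamma}/(4\alpha)$ (using that the dominant pair satisfies the size condition defining $P_\alpha$), whence $|\app{H}|\geq\beta_3^{-1}|A_1|\gg|G|^{\gamma-(1+c_2)\delta_0}$; it is this exponent $\gamma_1=\gamma-(1+c_2)\delta_0$ that produces the constraint $c_2\delta_0/\gamma_1<\delta$, i.e.\ the branch $\delta\gamma/(2c_2+1)$ in~\eqref{eq-delta1}. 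The condition $\delta_0\leq\eps/(2c_2)$ plays a different role entirely: it ensures that the residual contribution $\leq 16c_1|G|^{-1+2c_2\delta_0}$ of the pairs with huge $\app{H}$ is absorbed into the $|G|^{-(1-\eps)}$ branch. Relatedly, your pigeonholing cannot simultaneously guarantee that the energy-dominant pair also carries mass $\mu_i(A_{i,k_i})\geq 1/K$; what the dominance of the pair actually buys (via the trivial bound $E(A,B)\leq\min(|A|^2|B|,|A||B|^2)$) is $|A_{i,k_i}|\geq 2^{k_i}/(2\sqrt{\alpha})$ with $\alpha\approx|G|^{\delta_0}$, a loss of $|G|^{O(\delta_0)}$ rather than a polylog; this changes only the bookkeeping, not the structure, but it is the form in which the size condition must enter both the coset bound and the lower bound on $|\app{H}|$.
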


\begin{proof}
  By definition, we have
$$
\rp{X_1X_2}=\sum_{g\in G} \proba(X_1X_2=g)^2.
$$
\par
We now decompose the ranges of the distribution functions
$$
\nu_i(x)=\proba(X_i=x)
$$ 
into dyadic intervals. Consider a parameter $I\geq 1$, to be chosen
later, and decompose
$$
[\min \proba(X=x),\max \proba(X=x)]\subset [0,1]= \mathcal{I}_0\cup
\mathcal{I}_1\cup\cdots \cup\mathcal{I}_I
$$
where 
$$
\mathcal{I}_i=
\begin{cases}
]2^{-i-1},2^{-i}],&\text{ for } 0\leq i<I\\
[0,2^{-I}]&\text{ for } i=I.
\end{cases}
$$
\par
This gives two partitions of $G$ in subsets
$$
A_{j,i}=\{x\in G \,\mid\, \nu_j(x)=\proba(X_j=x)\in \mathcal{I}_i\},
$$
for $j=1$, $2$. We note that
\begin{equation}\label{eq-size-ai}
  |A_{j,i}|\leq 2^{i+1}
\end{equation}
for $j=1$, $2$ and $0\leq i<I$.
\par
Using this decomposition into the formula above, and the fact that
$$
\proba(X_1X_2=g, X_1\in A_{1,I}\text{ or } X_2\in A_{2,I})\leq
\proba(X_1\in A_{1,I})+\proba(X_2\in A_{2,I})\leq \frac{|G|}{2^{I-1}},
$$
we obtain
\begin{align*}
  \rp{X_1X_2}&= \sum_{g\in G} \Bigl(\sum_{0\leq i,j\leq I}
  \proba(X_1X_2=g, X_1\in A_{1,i},\ X_2\in A_{2.j})\Bigr)^2\\
  &\leq 8|G|^32^{-2I}+ 2\sum_{g\in G}\Bigl(\sum_{0\leq i,j<I}
  \proba(X_1X_2=g, X_1\in A_{1,i},\ X_2\in A_{2,j})\Bigr)^2 \\
  &\leq 2^{3-2I}|G|^3+ 2I^2\sum_{0\leq i,j<I} \sum_{g\in G}
  \proba(X_1X_2=g, X_1\in A_{1,i},\ X_2\in A_{2,j})^2
\end{align*}
by the Cauchy-Schwarz inequality.  Furthermore, the inner sum (say,
$B(A_{1,i},A_{2,j})$) in the second term is given by
\begin{align*}
  B(A_{1,i},A_{2,j})&=\sum_{g\in G} \proba(X_1X_2=g, \ X_1\in
  A_{1,i},\ X_2\in A_{2,j})^2
  \\
  &=\sum_{g\in G}{ \Bigl( \sum_{\stacksum{(x,y)\in A_{1,i}\times A_{2,j}}{xy=g}}{
      \proba(X_1=x)\proba(X_2=y)}\Bigr)^2 }\\
  &=\sum_{\stacksum{x_1,x_2\in A_{1,i}, y_1,y_2\in A_{2,j}}{x_1y_1=x_2y_2}}
  \nu_1(x_1)\nu_1(x_2)\nu_2(y_1)\nu_2(y_2)\\
  & \leq 2^{-2i-2j} |\{(x_1,x_2,y_1,y_2)\in A_{1,i}^2\times
  A_{2,j}^2\,\mid\, x_1y_1=x_2y_2\}|\\&=
2^{-2i-2j}E(A_{1,i},A_{2,j})
\end{align*}
where $E(A,B)$ denotes the multiplicative energy. 
\par
Thus we have proved that 
\begin{equation}\label{eq-step1}
\rp{X_1X_2} \leq 2^{3-2I}|G|^3+2I^2\sum_{0\leq
  i,j<I}2^{-2(i+j)}E(A_{1,i},A_{2,j}).
\end{equation}
\par
We now want to get upper-bounds in terms of the return probability
$\rpp{X_1,X_2}$. This is done in different ways, depending on the size
of the subsets $A_{1,i}$, $A_{2,j}$. We recall first the ``trivial''
bounds
\begin{equation}\label{eq-eab-1}
E(A,B)\leq \min(|A|^2|B|,|A||B|^2).
\end{equation}
\par
We claim that for all $i$ and $j$, we have
\begin{equation}\label{eq-bound1}
2^{-2(i+j)}E(A_{1,i},A_{2,j})\leq 2^4\rpp{X_1,X_2}e(A_{1,i},A_{2,j}),
\end{equation}
and that, for all $\alpha\geq 1$, we have
\begin{equation}\label{eq-bound2}
2^{-2(i+j)}E(A_{1,i},A_{2,j})\leq \alpha^{-1}\rpp{X_1,X_2}
\end{equation}
unless 
\begin{equation}\label{eq-size-ais}
\frac{|A_{1,i}|}{2^i}\geq \frac{1}{2\sqrt{\alpha}},\quad\quad 
\frac{|A_{2,j}|}{2^j}\geq \frac{1}{2\sqrt{\alpha}}.
\end{equation}
\par
To see~(\ref{eq-bound1}), we remark that
\begin{align*}
\rpp{X_1,X_2}
&\geq \frac{1}{2}(\rp{X_1}+\rp{X_2})
= 
\frac{1}{2}\sum_{g\in G}{(\proba(X_1=g)^2+\proba(X_2=g)^2)} \\
&\geq
\frac{1}{2}\Bigl(
\frac{|A_{1,i}|}{2^{2+2i}}+\frac{|A_{2,j}|}{2^{2+2j}} \Bigr) \geq
\frac{1}{4}\frac{(|A_{1,i}||A_{2,j}|)^{1/2}}{2^{i+j}}.
\end{align*}
for any choice of $i$ and $j$. Hence we get
\begin{align*}
  2^{-2(i+j)}E(A_{1,i},A_{2,j})
  &=2^{-2(i+j)}e(A_{1,i},A_{2,j})(|A_{1,i}||A_{2,j}|)^{3/2}
  \\
  &\leq 4\rpp{X_1,X_2}e(A_{1,i},A_{2,j})
  \frac{|A_{1,i}||A_{2,j}|}{2^{i+j}} 
\\&\leq
  16\rpp{X_1,X_2}e(A_{1,i},A_{2,j})
\end{align*}
by~(\ref{eq-size-ai}). 
\par
As for~(\ref{eq-bound2}), if we assume that
$2^{-2(i+j)}E(A_{1,i},A_{2,j})>\alpha^{-1}\rpp{X_1,X_2}$, then we
write simply
$$
2^{-2(i+j)}|A_{1,i}|^2|A_{2,j}|\geq
2^{-2(i+j)}E(A_{1,i},A_{2,j})\geq \alpha^{-1}\frac{|A_{2,j}|}{2^{2+2j}},
$$
using~(\ref{eq-eab-1}), and get the first inequality
of~(\ref{eq-size-ais}), the second being obtained symmetrically.
\par
With these results, we now fix some parameter $\alpha\geq 1$, and let
$$
P_{\alpha}=\{(i,j)\,\mid\, 0\leq i,j<I,\quad |A_{1,i}|\geq
2^{i-1}\alpha^{-1} \text{ and } |A_{2,j}|\geq 2^{j-1}\alpha^{-1}\}.
$$
\par
For $(i,j)\notin P_{\alpha}$, we have
$$
2^{-2(i+j)}E(A_{1,i},A_{2,j})\leq \alpha^{-2}\rpp{X_1,X_2}
$$
by~(\ref{eq-bound2}) and~(\ref{eq-size-ais}), and thus
from~(\ref{eq-step1}), we have shown that
$$
\rp{X_1X_2}\leq 2^{3-2I}|G|^3+2\alpha^{-2}\rpp{X_1,X_2}I^4
+32\rpp{X_1,X_2}I^2 \sum_{(i,j)\in P_{\alpha}}e(A_{1,i},A_{2,j})
$$
(estimating the size of the complement of $P_{\alpha}$ by $I^2$). We
select
$$
I=\left\lceil \frac{2\log 2|G|}{\log 2}\right\rceil\leq 3\log (3|G|),
$$
and hence obtain 
$$
  \rp{X_1X_2}\leq \frac{1}{|G|}
  +2^8\rpp{X_1,X_2}(\log 3|G|)^2
  \Bigl\{\frac{(\log 3|G|)^2}{\alpha^2}
  +2\sum_{(i,j)\in P_{\alpha}}e(A_{1,i},A_{2,j})
  \Bigr\}.
$$
\par
We apply this bound with $\alpha=|G|^{\delta_0}$, where $\delta_0>0$
will be chosen later. Thus
\begin{multline*}
  \rp{X_1X_2}\leq \frac{1}{|G|} +2^8\rpp{X_1,X_2}(\log
  3|G|)^4|G|^{-2\delta_0}+ \\
  2^9(\log 3|G|)^2\rpp{X_1,X_2}\sum_{(i,j)\in
    P_{\alpha}}e(A_{1,i},A_{2,j}).
\end{multline*}
\par
Let then
$$
R_{\alpha}=\{(i,j)\in P_{\alpha}\,\mid\, e(A_{1,i},A_{2,j})\geq
\alpha^{-1}\}\subset P_{\alpha},
$$
so that the contribution of those $(i,j)\in P_{\alpha}$ which are not
in $R_{\alpha}$, together with the middle term, can be bounded by
$$
\frac{2^{13}(\log 3|G|)^4}{|G|^{\delta_0}}\rpp{X_1,X_2}.
$$
\par
We can now analyze the set $R_{\alpha}$; it turns out to be very
restricted when $\delta_0$ is chosen small enough.  By
Theorem~\ref{th-large-energy}, for each $(i,j)\in R_{\alpha}$, there
exists a $\beta_1$-approximate subgroup $\app{H}_{i,j}$ and elements
$(x_i,y_j)\in A_{1,i}\times A_{2,j}$ such that
$$
|\app{H}_{i,j}|\leq \beta_2|A_{1,i}|,\quad\quad |A_{1,i}\cap
x_i\app{H}_{i,j}|\geq \beta_3^{-1}|A_{1,i}|,\quad\quad |A_{2,j}\cap
\app{H}_{i,j}y_j|\geq \beta_3^{-1}|A_{2,j}|,
$$
and with tripling constant bounded by $\beta_4$, where the $\beta_i$
are bounded qualitatively by
$$
\beta_i\leq c_1|G|^{c_2\delta_0}
$$
for some absolute constants, which we take to be $c_1=2^{2424}$,
$c_2=937$ using~(\ref{eq-explicit-mc}). We then note first that if
$H_{i,j}$ denotes the ``ordinary'' subgroup generated by
$\app{H}_{i,j}$, we have
\begin{multline}
\proba(X_1\in x_iH_{i,j})\geq \proba(X_1\in x_i\app{H}_{i,j}) 
\geq
\\\proba(X_1\in A_{1,i}\cap x_i\app{H}_{i,j}) 
\geq
\frac{1}{\beta_3}\frac{|A_{1,i}|}{2^{i+1}}\geq \frac{1}{4\beta_3 \alpha}
\geq \frac{1}{4c_1|G|^{(1+c_2)\delta_0}},
\label{eq-large-enough1}
\end{multline}
where we used the definition of $P_{\alpha}$. If $\delta_0$ is small
enough that
\begin{equation}\label{eq-small1}
(1+c_2)\delta_0<\gamma,
\end{equation}
and if $|G|$ is large enough, this is not compatible
with~(\ref{eq-bg-cond3}), and we can therefore assume that each
$\app{H}_{i,j}$ (if any!) generates the group $G$.
\par
We next observe that $\app{H}_{i,j}$ can not be extremely
small. Indeed, we have
$$
|\app{H}_{i,j}|\geq |x_i\app{H}_{i,j}\cap A_{1,i}|\geq \beta_3^{-1}|A_{1,i}|,
$$
on the one hand, and by applying~(\ref{eq-bg-cond3}) with $H=1$, we
can see that $A_{1,i}$ is not too small, namely
$$
|A_{1,i}|\geq \frac{\proba(X_1\in A_{1,i})}{\max_{g\in
    G}\proba(X_1=g)}\geq |G|^{\gamma}\proba(X_1\in A_{1,i}) \geq
\frac{|G|^{\gamma}|A_{1,i}|}{2^{i+1}}\geq \frac{|G|^{\gamma}}{4\alpha}
$$
using again the definition of $P_{\alpha}$. 
\par
This gives the lower bound
\begin{equation}\label{eq-hij-not-too-small}
|\app{H}_{i,j}|\geq \frac{|G|^{\gamma}}{4\alpha\beta_3}\geq
\frac{1}{4c_1}|G|^{\gamma_1}
\end{equation}
with $\gamma_1=\gamma-\delta_0(1+c_2)$ (which is $>0$
by~(\ref{eq-small1})), and then leads to control of the tripling
constant, namely
\begin{equation}\label{eq-large-enough2}
\trp{\app{H}_{i,j}}\leq \beta_4 \leq c_1|G|^{c_2\delta_0} \leq
c_1(4c_1)^{2\delta_0\gamma_1^{-1}}|\app{H}_{i,j}|^{c_2\delta_0\gamma_1^{-1}}.
\end{equation}
\par
Since we assumed that $G$ is $\delta$-flourishing, we see from
Definition~\ref{def-delta-control} that if $\delta_0$ is such that
\begin{equation}\label{eq-small2}
  \frac{c_2\delta_0}{\gamma_1}=\frac{c_2\delta_0}{\gamma-(1+c_2)\delta_0}
  <\delta,
\end{equation}
and again if $|G|$ is large enough, the approximate subgroup
$\app{H}_{i,j}$ must in fact be very large, specifically it must
satisfy
$$
\app{H}_{i,j}\cdot \app{H}_{i,j}\cdot \app{H}_{i,j}=G,
$$
and in particular
$$
|\app{H}_{i,j}|\geq \frac{|G|}{\beta_4}\geq
\frac{1}{c_1}|G|^{1-c_2\delta_0}.
$$
\par
Intuitively, this implies that $X_1$ and $X_2$ are already rather
uniformly distributed over $G$, and hence that $\rpp{X_1,X_2}$ is
already too small to be significantly improved at the level of
$X_1X_2$. To express this idea concretely, we go back to the first
stage of the argument, namely~(\ref{eq-step1}): the contribution to
$\rp{X_1X_2}$ coming from $(i,j)$ was bounded by
$$
2^{-2(i+j)}E(A_{1,i},A_{2,j})\leq \frac{|A_{1,i}||A_{2,j}|^2}{2^{2(i+j)}}
\leq \frac{1}{2^{i-3}}
$$
by~(\ref{eq-size-ai}). But then we also have
$$
2^{i+1}\geq |A_{1,i}|
\geq \frac{|\app{H}_{i,j}|}{\beta_2} \geq
\frac{|G|}{\beta_2\beta_4} \geq c_1^{-1}|G|^{1-c_2\delta_0},
$$
(observe that $\beta_2\beta_4\leq c_1|G|^{c_2\delta_0}$)
and therefore
$$
2^{-2(i+j)}E(A_{1,i},A_{2,j})\leq 16c_1|G|^{-1+2c_2\delta_0}.
$$
\par
Using again the trivial bound $I^2\leq 9(\log 3|G|)^2$ for the number
of possible pairs $(i,j)$ to which this applies, the conclusion is an
inequality
\begin{equation}\label{eq-large-enough3}
  \rp{X_1X_2}\leq \frac{1}{|G|}+
  2^{11}c_1\frac{(\log 3|G|)^4}{|G|^{1-c_2\delta_0}}
  +2^{13}\frac{(\log 3|G|)^4}{|G|^{\delta_0}}\rpp{X_1,X_2},
\end{equation}
which holds (under the assumptions that $|G|$ is sufficiently large)
for all $\delta_0$ small enough so that~(\ref{eq-small1})
and~(\ref{eq-small2}) are satisfied. It is elementary
that~(\ref{eq-small2}) is stronger than~(\ref{eq-small1}) and is
equivalent with
$$
\delta_0<\frac{\delta\gamma}{(1+\delta)c_2+\delta},
$$
which holds when $\delta_0<\delta\gamma/(2c_2+1)$ (since we assume
$\delta\leq 1$).
\par 
Thus we can apply this for
$$
\delta_0=\min\Bigl(\frac{\delta\gamma}{2c_2+1}, \frac{\eps}{2c_2}
\Bigr)=2\delta_1,
$$
where $\delta_1$ is given by~(\ref{eq-delta1}). Then for $|G|$ large
enough,~(\ref{eq-large-enough3}) implies~(\ref{eq-large-enough-rpp}),
and hence we have finished the proof of Theorem~\ref{th-l2-flatten}.
\end{proof}




We can summarize all this as follows (with the same remark as before
concerning our handling of the conditions on the size of $G$):

\begin{corollary}[The Bourgain-Gamburd expansion
  criterion]\label{cor-bg}
  Let $\uple{c}=(c,d,\delta,\gamma)$ be a tuple of positive real
  numbers, and let $\bbigf(\uple{c})$ be the family of all finite
  connected Cayley graphs $\cayley{G}{S}$ for which the following
  conditions hold:
\par
\emph{(1)} We have $\mdim{G}\geq |G|^{d}$;
\par
\emph{(2)} The group $G$ is $\delta$-flourishing;
\par
\emph{(3)} For the random walk $(X_n)$ on $G$ with $X_0=1$, we have
that
$$
\proba(X_{k}\in xH)\leq |G|^{-\gamma}
$$
for some $k\leq c\log |G|$ and all $x\in G$ and proper subgroups
$H\subset G$.
\par
Then, 
for any $\Gamma\in \bbigf(\uple{c})$ with $|\Gamma|$ large enough, the
spectral gap of the normalized Laplace operator of $\Gamma$ satisfies
$$
\lambda_1(\Gamma)\geq 1-\exp\Bigl(-\frac{d}{4cj}\Bigr),
$$
where
$$
j\leq 8\max\Bigl( \frac{2c_2+1}{\delta\gamma},\frac{16c_2}{7d} \Bigr).
$$
\end{corollary}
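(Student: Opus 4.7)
The plan is to iterate the $L^2$-flattening inequality of Theorem~\ref{th-l2-flatten} along a dyadic sequence of steps of the random walk, driving $\rp{X_m}$ down to its saturation value of order $|G|^{-1+\eps}$, and then feed this into the spectral bound~(\ref{eq-spectral}) to obtain a uniform lower bound on $\lambda_1(\Gamma)$.

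First, I would use hypothesis~(3) to fix an integer $k \leq c\log|G|$ such that $\proba(X_k \in xH) \leq |G|^{-\gamma}$ for all proper subgroups $H \subset G$ and all $x \in G$. A brief preliminary observation is that this non-concentration is inherited by every $X_m$ with $m \geq k$: writing $X_m = X_k Y$ with $Y$ independent of $X_k$ and conditioning on $Y = y$, the event $\{X_m \in xH\}$ becomes $\{X_k \in xy^{-1}\cdot yHy^{-1}\}$, a coset of the proper conjugate subgroup $yHy^{-1}$, so the bound $|G|^{-\gamma}$ passes through the expectation over $y$. This guarantees that at every step $m_i = 2^i k$ of the dyadic sequence, hypothesis~(\ref{eq-bg-cond3}) of Theorem~\ref{th-l2-flatten} is satisfied for two independent copies of $X_{m_i}$, whose product is distributed as $X_{m_{i+1}}$.

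Next I would iterate. Choose $\eps$ as a constant fraction of $d$ (for instance $\eps = 7d/8$, so that $1/\delta_1 = 2\max(\frac{2c_2+1}{\delta\gamma}, \frac{16c_2}{7d})$), and apply Theorem~\ref{th-l2-flatten} at each stage together with hypothesis~(2) that $G$ is $\delta$-flourishing. This gives the recursion
$$
\rp{X_{m_{i+1}}} \leq c_3 \max\Bigl(|G|^{-1+\eps}, \frac{\rp{X_{m_i}}}{|G|^{\delta_1}}\Bigr),
$$
which after $J = \lceil (1-\eps)/\delta_1\rceil$ doublings saturates at $\rp{X_m} \leq c_3|G|^{-1+\eps}$, with $m = 2^J k \leq 2^J c \log|G|$. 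Feeding this into the spectral bound~(\ref{eq-spectral}) and using $\mdim{G} \geq |G|^d$ from hypothesis~(1) yields
$$
\rho_\Gamma^{2m} \leq \frac{|G|}{\mdim{G}}\rp{X_m} \leq c_3|G|^{\eps-d}.
$$
Since $\log|G|/m \geq 1/(2^J c)$, for $|G|$ sufficiently large (so that $c_3^{1/(2m)}$ becomes negligible) I obtain $\rho_\Gamma \leq \exp(-(d-\eps)/(2\cdot 2^J c))$, and with $\eps = 7d/8$ this gives $\rho_\Gamma \leq \exp(-d/(16 c\cdot 2^J))$. The conclusion on $\lambda_1(\Gamma) \geq 1 - \rho_\Gamma$ then follows, with $j$ of the form stated in the theorem once the factor $2^J$ and the numerical constants are collected.

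The main obstacle is the meticulous bookkeeping required to compile the successive ``$|G|$ large enough'' provisos at each stage (each application of Theorem~\ref{th-l2-flatten} imposes polynomial conditions on $|G|$ in terms of $\delta,\gamma,d,c_1,c_2$, and the accumulated constant $c_3^J$) into a single workable threshold, and to carry through the explicit optimisation of $\eps$ so as to match the stated bound $j \leq 8\max(\ldots)$. The one genuinely conceptual point is the inheritance of the non-concentration hypothesis along the dyadic chain; without it, the flattening inequality could not be reapplied and the doubling procedure would stall after a single step.
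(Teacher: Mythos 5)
Your proposal is correct and follows essentially the same route as the paper: dyadic iteration of Theorem~\ref{th-l2-flatten} applied to $Y_1=X_{2^ik}$ and the independent increment $Y_2=X_{2^{i+1}k}Y_1^{-1}$, followed by the cycle-counting bound~(\ref{eq-spectral}); your explicit check that the coset non-concentration propagates from $X_k$ to $X_{2^ik}$ via conjugate subgroups is a step the paper leaves implicit, and your choice $\eps=7d/8$ is the one that actually yields the constant $16c_2/(7d)$ appearing in the statement (the paper's proof takes $\eps=d/2$, giving $4c_2/d$). Note that, exactly like the paper's own argument, you arrive at a spectral gap of the form $1-\exp\bigl(-d/(16c\cdot 2^{J})\bigr)$, exponential in the number of doublings --- which is what is used in Section~\ref{ssec-summary} --- rather than the $1-\exp(-d/(4cj))$ literally printed in the corollary.
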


Note that it is not clear at this point that this corollary is not an
empty statement (or one that applies at most to finitely many graphs
with a bounded valency). But in the next section we will check that it
applies to the situation of Theorem~\ref{th-bg-bis} to prove that
certains families of Cayley graphs are expanders.

\begin{proof}
  Let $\Gamma=\cayley{G}{S}$ be a graph in $\bbigf(\uple{c})$.  We
  will apply Theorem~\ref{th-l2-flatten} with $\eps=d/2$ so that
$$
  \delta_1=\frac{1}{2}\min\Bigl(\frac{\delta\gamma}{2c_2+1},
  \frac{d}{4c_2}
\Bigr)
$$
\par
When $|G|$ is large enough, we can rephrase the conclusion using the
simpler inequality
\begin{equation}\label{eq-large-enough4}
\rp{Y_1Y_2}\leq c_3\max\Bigl(\frac{1}{|G|^{1-d/2}},
\frac{\rpp{Y_1,Y_2}}{|G|^{\delta_1}}\Bigr) \leq
\max\Bigl(\frac{1}{|G|^{1-3d/4}},\frac{\rpp{Y_1,Y_2}}{|G|^{\delta_1/2}}\Bigr),
\end{equation}
for random variables $Y_1$, $Y_2$ which satisfy the assumptions of
this theorem.
\par
Let $k=\lfloor c\log|G|\rfloor$ be given by (3). We apply the theorem
to $Y_1=X_{2^jk}$ and $Y_2=X_{2^{(j+1)}k}Y_1^{-1}$ for $j\geq 0$.
These are indeed independent and symmetric random variables, and
Conditions (2) and (3) imply that we can indeed apply
Theorem~\ref{th-l2-flatten} to these random variables for any $j\geq
2$. Since $Y_1$ and $Y_2$ are identically distributed, we have
$$
\rpp{Y_1,Y_2}=\rp{Y_1}=\rp{X_{2^jk}}.
$$
\par
Thus, applying the theorem, we obtain by induction
$$
\rp{X_{2^jk}}\leq \rp{X_{k}}|G|^{-j\delta_1/2}\leq |G|^{-j\delta_1/2}
$$
when $j$ is such that
$$
|G|^{1-3d/4}>|G|^{j\delta_1/2}, 
$$
and for larger $j$, we get
$$
\rp{X_{2^jk}}\leq |G|^{-1+3d/4}.
$$
\par
In particular, we obtain this last inequality for
$$
j=\Bigl\lceil \frac{2(1-3d/4)}{\delta_1}\Bigr\rceil\leq
\frac{4}{\delta_1}\leq 8\max\Bigl(
\frac{2c_2+1}{\delta\gamma},\frac{4c_2}{d}
\Bigr),
$$
which, by the ``cycle-counting'' inequality~(\ref{eq-spectral}), gives
$$
\rho_{\Gamma}\leq (|G|^{1-d}\rp{X_{jk}})^{1/(2^jk)} \leq
\exp\Bigl(-\frac{d}{2^{j+3}c}\Bigr),
$$
which 
thus proves the theorem.
\end{proof}

\subsection{Expansion bounds for $\SL_2$}
\label{sec-implement}

Theorem~\ref{th-bg-bis} will now be proved by applying the criterion
of Corollary~\ref{cor-bg}. Thus we will consider the groups
$G_p=\SL_2(\Fp_p)$ for $p$ prime, for which Condition (1) of the
Bourgain-Gamburd criterion (which is purely a group-theoretic
property) is given by
$$
\mdim{\SL_2(\Fp_p)}=\frac{p-1}{2}
$$
for $p\geq 3$ (a result of Frobenius), which gives a value of $d$
arbitrarily close to $1/3$, for $p$ large enough. Condition (2) is
given by Helfgott's Theorem, with $\delta=1/3024$. Note that it is
purely a property of the groups $\SL_2(\Fp_p)$.
\par
Condition (3), on the other hand, depends on the choice of generating
sets. The symmetric generating sets $S_p$ in Theorem~\ref{th-bg-bis}
are assumed to be obtained by reduction modulo $p$ of a fixed
symmetric subset $S\subset \SL_2(\Zz)$. We will argue first under the
additional assumption that $S\subset \SL_2(\Zz)$ generates a free
group.
\par
We begin with a classical proposition, whose idea goes back to
Margulis. For the statement, recall that the norm of a matrix $g\in
\GL_n(\Cc)$ is defined by
$$
\|g\|=\max_{v,w\not=0}\frac{|\langle gv,w\rangle|}{\|v\|\|w\|}
$$
where $\langle\cdot,\cdot\rangle$ is the standard inner product on
$\Cc^n$. This satisfies
\begin{equation}\label{eq-prop-norm}
\|g_1g_2\|\leq \|g_1\|\|g_2\|,\quad\quad
\max_{i,j}{|g_{i,j}|}\leq \|g\|\text{ for } g=(g_{i,j}),
\end{equation}
the latter because $g_{i,j}=\langle ge_i,e_j\rangle$ in terms of the
canonical basis.

\begin{proposition}[Large girth for finite Cayley graphs]\label{pr-girth}
  Let $S\subset \SL_2(\Zz)$ be a symmetric set, and let
  $\Gamma=\cayley{G}{S}$ be the corresponding Cayley graphs. Let
  $\tau>0$ be defined by
\begin{equation}\label{eq-def-tau}
\tau^{-1}=\log \max_{s\in S}{\|s\|}>0,
\end{equation}
which depends only on $S$.
\par
\emph{(1)} For all primes $p$ and all $r<\tau\log(p/2)$, where
$G_p=\SL_2(\Fp_p)$, the subgraph $\Gamma_r$ induced by the ball of
radius $r$ in $\Gamma$ maps injectively to $\cayley{G_p}{S}$.
\par
\emph{(2)} If $G$ is freely generated by $S$, in particular $1\notin
S$, the Cayley graph $\cayley{G_p}{S}$ contains no cycle of length
$<2\tau\log(p/2)$, i.e., its girth $\girth{\cayley{G_p}{S}}$ is at
least $2\tau\log(p/2)$.
\end{proposition}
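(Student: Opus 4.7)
The plan is to exploit the arithmetic of $\SL_2(\Zz)$: a nontrivial element reducing to the identity modulo $p$ must have some entry of absolute value at least $p$, whereas an element of small word length in $S$ has uniformly controlled matrix norm.

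For part~(1), take $g_1, g_2$ in the ball of radius $r$ in $\Gamma$, so each is a product of at most $r$ elements of $S$. By the submultiplicativity in~\eqref{eq-prop-norm}, $\|g_i\| \leq M^r$ with $M = \max_{s \in S}\|s\|$, so the triangle inequality gives $\|g_1 - g_2\| \leq 2M^r$, and every entry of the integer matrix $g_1 - g_2$ is bounded in absolute value by $2M^r$. If $g_1 \ne g_2$ in $G$ while $g_1 \equiv g_2 \pmod p$, then $g_1 - g_2$ is a nonzero integer matrix whose entries are all divisible by $p$; some such entry has absolute value $\geq p$, forcing $p \leq 2M^r$, i.e.\ $r \geq \tau \log(p/2)$, contradicting the hypothesis $r < \tau\log(p/2)$. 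Thus the vertex map $\Gamma_r \to \cayley{G_p}{S_p}$ is injective, and edges are automatically preserved since reduction modulo $p$ is a homomorphism sending $S$ into $S_p$.

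For part~(2), since $G$ is free on (representatives of) $S$, the graph $\Gamma$ is the $|S|$-regular tree: every reduced nonempty word in $S$ yields a nontrivial element of $G$. Suppose $\cayley{G_p}{S_p}$ contains a cycle of length $\ell < 2\tau\log(p/2)$. Lifting each edge-label $s_i \in S_p$ to $\tilde s_i \in S$ (which is unambiguous once $p$ is large enough for $S \to S_p$ to be injective, and otherwise the claimed bound is already vacuous), the non-backtracking property of the cycle makes $w = \tilde s_1 \cdots \tilde s_\ell$ a reduced word, hence nontrivial in $G$, while $w \equiv 1 \pmod p$. Split at the midpoint $k = \lfloor \ell/2 \rfloor$ by setting $u = \tilde s_1 \cdots \tilde s_k$ and $u' = \tilde s_\ell^{-1} \cdots \tilde s_{k+1}^{-1}$: the relation $w \equiv 1 \pmod p$ gives $u \equiv u' \pmod p$, while $u = u'$ in $G$ would make $w$ trivial in $G$. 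Both $u$ and $u'$ lie in the ball of radius $\lceil \ell/2 \rceil$, so part~(1) forces $\lceil \ell/2 \rceil \geq \tau\log(p/2)$, yielding $\ell \geq 2\tau\log(p/2)$.

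The only substantive step is the norm argument in part~(1); part~(2) is a formal consequence of part~(1) combined with the characterization of triviality in a free group. The main pitfall to watch for is conflating word length with geodesic distance in $\Gamma$ and handling a potential off-by-one when $\ell$ is odd in the midpoint split, but both are absorbed once one uses $\lceil\ell/2\rceil$ as the radius and invokes part~(1) with its strict inequality.
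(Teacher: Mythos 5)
Your argument is correct in substance and follows the same route as the paper: part~(1) is the identical integrality observation (entries of a word of length $r$ are at most $M^r=e^{r/\tau}$ with $M=\max_{s\in S}\|s\|$, so two distinct such words congruent modulo $p$ would force $p\leq 2M^r$), and the midpoint split in part~(2) is just an unfolded form of the paper's ``lift the cycle to the tree'' argument, which likewise rests on every vertex of an $\ell$-cycle lying within distance about $\ell/2$ of the base point.

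One small correction: your final deduction for odd $\ell$ goes the wrong way. From $\lceil\ell/2\rceil\geq\tau\log(p/2)$ you only get $\ell\geq 2\tau\log(p/2)-1$, not $\ell\geq 2\tau\log(p/2)$; the ceiling does not ``absorb'' the off-by-one, it creates it, because your $u'$ has word length $\ell-\lfloor\ell/2\rfloor=\lceil\ell/2\rceil$, which strictly exceeds $\ell/2$ when $\ell$ is odd. The paper's own proof is equally loose at this boundary (lifting the middle edge of an odd cycle also requires injectivity on the ball of radius $\lceil\ell/2\rceil$), and nothing downstream uses the girth bound with this precision --- it is always applied through $\lfloor\tau\log(p/2)\rfloor$ --- so the slip costs at most $1$ on the girth and is harmless in practice; but as written, your claim that the odd case is handled is not justified.
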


\begin{proof}
  The main point is that if all coordinates of two matrices $g_1$,
  $g_2\in \SL_2(\Zz)$ are less than $p/2$ in absolute value, a
  congruence $g_1\equiv g_2\mods{p}$ is equivalent to the equality
  $g_1=g_2$. And because $G$ is freely generated by $S$, knowing a
  matrix in $G$ is equivalent to knowing its expression as a word in
  the generators in $S$. 
\par
Thus, let $x$ be an element in the ball of radius $r$ centered at the
origin. By definition, $x$ can be expressed as
$$
x=s_1\cdots s_m
$$
with $m\leq r$ and $s_i\in S$. Using~(\ref{eq-prop-norm}), we get
$$
\max_{i,j}|x_{i,j}|\leq \|x\|\leq \|s_1\|\cdots \|s_m\|\leq
e^{m/\tau}\leq e^{r/\tau}.
$$
\par
Applying the beginning remark and this fact to two elements $x$ and
$y$ in the ball $\vois{1}{r}$ of radius $r$ centered at $1$, for $r$
such that $e^{r/\tau}<\frac{p}{2}$, it follows that $x\equiv
y\mods{p}$ implies $x=y$, which is (1).
\par
Then (2) follows because any embedding of a cycle $\gamma\,:\, C_m\ra
\cayley{G_p}{S}$ such that $\gamma(0)=1$ and such that
$$
d(1,\gamma(i))\leq m/2<\tau\log (p/2)
$$
for all $i$ can be lifted to the cycle (of the same length) with image
in the Cayley graph of $G$ with respect to $S$, and if $S$ generates
freely $G$, the latter graph is a tree. Thus a cycle of length
$m=\girth{\cayley{G_p}{S}}$ must satisfy $m/2\geq \tau\log(p/2)$.
\end{proof}

We can now check Condition (3) in the Bourgain-Gamburd criterion,
first for cosets of the trivial subgroup, i.e., for the probability
that $X_n$ be a fixed element when $n$ is of size $c\log p$ for some
fixed (but small) $c>0$. As we did earlier, we clearly mark where
we impose conditions on the size of $p$, and these will be made
explicit in Section~\ref{ssec-summary}.

\begin{corollary}[Decay of probabilities]\label{cor-decay}
  Let $S\subset \SL_2(\Zz)$ be a symmetric set, $G$ the subgroup
  generated by $S$. Assume that $S$ freely generates $G$. Let $p$ be a
  prime such that the reduction $S_p$ of $S$ modulo $p$ generates
  $G_p=\SL_2(\Fp_p)$, and let $(X_n)$ be the random walk on
  $\cayley{G_p}{S_p}$ with $X_0=1$. Let
$$
\tau^{-1}=\log \max_{s\in S}{\|s\|}>0,
$$
as in Proposition~\ref{pr-girth}.
\par
Fix a constant $c$ with $0<c\leq 1$. If $p$ is large enough, depending
on $c$ and $S$, then for
$$
n=c\lfloor \tau \log(p/2)\rfloor
$$
and any $x\in \SL_2(\Fp_p)$, we have
\begin{equation}\label{eq-prob-gamma}
\proba(X_n=x)\leq |G_p|^{-c\gamma_1}
\end{equation}
where
\begin{equation}\label{eq-ggamma1}
  \gamma_1=\frac{\tau(\log (\kesten\sqrt{|S|}))}{8}.
\end{equation}
\par
More precisely, this holds for all
\begin{equation}\label{eq-large-enough-ok1}
p\geq \max\Bigl(17, 2\exp\Bigl(\frac{2}{c\tau}\Bigr)\Bigr).
\end{equation}
\end{corollary}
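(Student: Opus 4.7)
The plan is to reduce the bound on $\proba(X_n = x)$ for the random walk on $\cayley{G_p}{S_p}$ to the analogous estimate on the Cayley graph of the free group $G$, which (since $S$ generates $G$ freely of rank $|S|/2$) is just the $|S|$-regular tree $T$, and then to invoke the classical Kesten spectral bound on $T$.

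First, for $n = c\lfloor \tau\log(p/2)\rfloor$, I would apply Proposition~\ref{pr-girth}(1): provided $p \geq 2\exp(2/(c\tau))$, we have $n \leq c\tau\log(p/2) < \tau\log(p/2)$, so the ball of radius $n$ about the identity in $\cayley{G}{S}$ injects into $\cayley{G_p}{S_p}$. Because $G$ is free on $|S|/2$ generators, this ball coincides with the corresponding ball in $T$. Consequently, writing $\tilde X_n$ for the simple random walk on $T$ started at the identity, one has $\proba(X_n = x) = \proba(\tilde X_n = \tilde x)$ for the unique lift $\tilde x \in T$ of $x$ inside the ball of radius $n$ (and $0$ when no such lift exists). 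The problem is thus reduced to bounding $\max_{\tilde x}\proba(\tilde X_n = \tilde x)$ on the tree.

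Second, I would invoke Kesten's theorem that the normalized Markov operator $M$ of the simple random walk on an $|S|$-regular tree is self-adjoint with spectral radius $\rho = 2\sqrt{|S|-1}/|S|$. Since $\|\delta_1\|_2 = 1$ and since $\|\cdot\|_\infty \leq \|\cdot\|_2$ with counting measure on the discrete tree,
$$\proba(\tilde X_n = \tilde x) \leq \|M^n\delta_1\|_\infty \leq \|M^n \delta_1\|_2 \leq \rho^n.$$
Combined with $|G_p| = p(p^2-1) \leq p^3$ and the choice of $n$, the desired estimate $\rho^n \leq |G_p|^{-c\gamma_1}$ reduces to a logarithmic comparison of the form $n\log(1/\rho) \geq c\gamma_1 \log|G_p|$, which asymptotically boils down to $\tau\log(1/\rho) \geq 3\gamma_1$. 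The generous factor $1/8$ built into $\gamma_1 = \tau\log(\kesten\sqrt{|S|})/8$ provides enough margin to absorb both the comparison of $\log(1/\rho)$ with $\log(\kesten\sqrt{|S|})$ and the bookkeeping loss between $\log(p/2)$ and $\log p$, once $p$ exceeds the explicit threshold $\max(17, 2\exp(2/(c\tau)))$.

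The main obstacle is not conceptual — large girth, Kesten's bound, and the bound $|G_p| \leq p^3$ are all classical. The delicate point is purely quantitative: one must verify that the threshold on $p$ is enough to dominate the losses from the floor function, from the polynomial correction in Kesten's asymptotic, and from $\log(p/2) < \log p$, so that the clean formula $\gamma_1 = \tau\log(\kesten\sqrt{|S|})/8$ really emerges from the argument with the correct explicit constants.
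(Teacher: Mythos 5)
Your argument is essentially the same as the paper's: use Proposition~\ref{pr-girth} to identify $\proba(X_n=x)$ with the corresponding probability for the walk on the $|S|$-regular tree as long as $n\leq \tau\log(p/2)$, apply Kesten's bound $\proba(\tilde{X}_n=\tilde{x})\leq r^{-n}$ with $r=|S|/(2\sqrt{|S|-1})$, and then convert powers of $p/2$ into powers of $|G_p|$ (the paper uses $(p/2)^4\geq |G_p|$ for $p\geq 17$, which is where the threshold $17$ comes from, rather than your $|G_p|\leq p^3$). One caution on the last step: the factor $1/8$ in $\gamma_1$ is already fully spent as $\frac{1}{2}\cdot\frac{1}{4}$ (the $\frac{1}{2}$ absorbs the floor-function loss once $p\geq 2\exp(2/(c\tau))$, and the $\frac{1}{4}$ comes from $(p/2)^4\geq|G_p|$), so there is no leftover ``margin'' to absorb the comparison of $\log(1/\rho)=\log r$ with $\log(\kesten\sqrt{|S|})$ — that comparison must hold on the nose, and since $r$ is only of size about $\sqrt{|S|}/2$ this is exactly the one numerical inequality you (and the paper, in its closing line ``since $r\geq\kesten\sqrt{|S|}$'') need to verify rather than wave away.
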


The ``extra'' parameter $c$ will be useful in the argument involving
all proper subgroups $H$ below.

\begin{proof}
  There exists $\tilde{x}\in G$ such that $\tilde{x}$ reduces to $x$
  modulo $p$ and $\tilde{x}$ is at the same distance to $1$ as $x$,
  and by Proposition~\ref{pr-girth}, (2), we have
$$
\proba(X_n=x)=\proba(\tilde{X}_n=\tilde{x}),
$$
for $n\leq \tau\log(p/2)$, where $(\tilde{X}_n)$ is the random walk
starting at $1$ on the $|S|$-regular tree $\cayley{G}{S}$. By a
well-known result of Kesten~\cite{kesten}, we have
$$
\proba(\tilde{X}_n=\tilde{x})\leq r^{-n}\quad\text{with}\quad
r=\frac{|S|}{2\sqrt{|S|-1}},
$$
for all $n\geq 1$ and all $\tilde{x}\in G$. Since $c\leq 1$ we have
$$
n=c\lfloor \tau \log(p/2)\rfloor\geq c\tau\log(p/2)-1,
$$
and we obtain
$$
\proba(X_n=x)\leq r\Bigl(\frac{p}{2}\Bigr)^{-c\tau\log r}\leq
\Bigl(\frac{p}{2}\Bigr)^{-\demi c\tau\log r},
$$
for $p\geq 2r^{2/(c\tau\log r)}$. Using the inequality
$$
\frac{p}{2}\geq |G_p|^{1/4}
$$
for $p\geq 17$, this becomes
$$
\proba(X_n=x)\leq |G_p|^{-c\tau(\log r)/8}
$$
for all $p\geq \max(17, 2r^{2/(c\tau\log r)})$. Since $r\geq
\frac{2}{\sqrt{3}} \sqrt{|S|}$, we get the desired result.
\end{proof}

In order to deal with cosets of other proper subgroups of
$\SL_2(\Fp_p)$, we will exploit the fact that those subgroups are very
well understood, and in particular, there is no proper subgroup that
is ``both big and complicated''. Precisely, by results going back to
Dickson (see, e.g., the account in~\cite[Ch. 6]{suzuki} for
$\PSL_2(\Fp_p)$, from which the result for $\SL_2(\Fp_p)$ follows
easily), one knows that for $p\geq 5$, if $H\subset \SL_2(\Fp_p)$ is a
proper subgroup, one of the following two properties holds:
\par
(1) The order of $H$ is at most $120$;
\par
(2) For all $(x_1,x_2,x_3,x_4)\in H$, we have
\begin{equation}\label{eq-relation-h}
[[x_1,x_2],[x_3,x_4]]=1.
\end{equation}
\par
The first ones are ``small'', and will be easy to handle
using~(\ref{eq-prob-gamma}). The second are, from the group-theoretic
point of view, not very complicated (their commutator subgroups are
abelian). The following \emph{ad-hoc} lemma\footnote{\ Note that this
  is the only place where using prime fields $\Fp_p$ instead of
  arbitrary finite fields really simplifies the argument,
  since~(\ref{eq-relation-h}) does not hold for proper subgroups of,
  say, $\SL_2(\Fp_{p^2})$.} takes care of them:

\begin{proposition}\label{pr-ad-hoc}
  Let $k\geq 2$ be an integer and let $W\subset F_k$ be a subset of
  the free group on $k$ generators $(a_1,\ldots, a_k)$ such that
\begin{equation}\label{eq-two-step}
[[x_1,x_2],[x_3,x_4]]=1
\end{equation}
for all $(x_1,x_2,x_3,x_4)\in W$. Then for any $m\geq 1$, we have
$$
|\{x\in W\,\mid\, d_{T}(1,x)\leq m\}|\leq (4m+1)(8m+1)\leq 45m^2,
$$
where $T$ is the $|S|$-regular tree $\cayley{F_k}{S}$, $S=\{a_i^{\pm
  1}\}$.
\end{proposition}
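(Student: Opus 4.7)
The plan is to analyze $W_m := W \cap B_m$, where $B_m$ is the closed ball of radius $m$ around the identity in the $|S|$-regular tree $T=\cayley{F_k}{S}$, and to exploit two standard facts about the free group $F_k$: (a) the centralizer $C_{F_k}(g)$ of any non-trivial element $g\in F_k$ is infinite cyclic (generated by the unique primitive root of $g$), from which it follows that any pairwise-commuting subset of $F_k$ lies in a common cyclic subgroup; and (b) any non-trivial $v\in F_k$ satisfies $|v^n|\geq |n|$ for all $n\in\Zz$, obtained by writing $v=g u g^{-1}$ with $u$ cyclically reduced so that $v^n=gu^ng^{-1}$ is reduced of length $2|g|+|n||u|\geq |n|$.

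I split on whether $W_m$ is pairwise commuting. If it is, fact (a) forces $W_m\subseteq\langle w\rangle$ for some $w\in F_k$, and the triangle inequality together with fact (b) gives $|\langle w\rangle\cap B_m|\leq 2m+1$, well within the claimed bound. Otherwise fix $a,b\in W_m$ with $[a,b]\neq 1$. By hypothesis \eqref{eq-two-step}, for every $c,d\in W$ the commutator $[c,d]$ commutes with $[a,b]$, so all such commutators lie in the cyclic subgroup $\langle w\rangle:=C_{F_k}([a,b])$. In particular the map
$$
\phi\,:\,\lmap{W_m}{\langle w\rangle}{c}{[a,c]=aca^{-1}c^{-1}}
$$
is well-defined, and it remains to bound $|W_m|$ by bounding the image and fibers of $\phi$ separately.

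For the image, $|[a,c]|\leq 2|a|+2|c|\leq 4m$ whenever $c\in W_m$ (using $|a|\leq m$), and fact (b) applied to $w$ gives $[a,c]=w^n$ with $|n|\leq 4m$, hence $|\phi(W_m)|\leq 8m+1$. For the fibers, a direct calculation shows that $[a,c_1]=[a,c_2]$ is equivalent to $c_1^{-1}c_2\in C_{F_k}(a)$; since $a\neq 1$, fact (a) gives $C_{F_k}(a)=\langle v\rangle$ for some $v$, so each non-empty fiber is contained in a left coset $c_0\langle v\rangle$ with $c_0\in W_m$. Using the triangle inequality $|c_0v^n|\geq |v^n|-|c_0|\geq |n|-m$ together with fact (b), the condition $|c_0v^n|\leq m$ forces $|n|\leq 2m$, giving at most $4m+1$ elements per fiber. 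Multiplying yields $|W_m|\leq (8m+1)(4m+1)\leq 45m^2$, the final numerical inequality $32m^2+12m+1\leq 45m^2$ being an elementary check for $m\geq 1$.

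The conceptual skeleton is really just "commutators of $W$-elements generate a cyclic subgroup, commuting fibers are cosets of a cyclic subgroup", so the main point requiring care is the clean identification of the fibers of $\phi$ and the uniform linear-growth estimate $|v^n|\geq |n|$ for an arbitrary non-trivial $v$ (not assumed cyclically reduced). Once these are in place, the counting reduces to the observation that a coset of a cyclic subgroup meets $B_m$ in a bounded number of points, and no further multiplicative combinatorics is required.
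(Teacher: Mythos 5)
Your proof is correct and follows essentially the same route as the paper: the same dichotomy on whether $W_m$ is pairwise commuting, the same use of the fact that centralizers of non-trivial elements of $F_k$ are infinite cyclic with $d_T(1,v^n)\geq |n|$, and the same counting of $W_m$ via the image (at most $8m+1$ values) and fibers (at most $4m+1$ elements, identified as pieces of cosets of $C_{F_k}(a)$) of the commutator map $c\mapsto [a,c]$. The only cosmetic difference is that you bound the image by writing $[a,c]=w^n$ with $|n|\leq 4m$ directly, whereas the paper applies its ball-intersection lemma for centralizers at radius $4m$; these are the same estimate.
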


\begin{proof}
  The basic fact we need is that the condition $[x,y]=1$ is very
  restrictive in $F_k$: precisely, for a fixed $x\not=1$, we have
  $[x,y]=1$ if and only if $y\in C_{F_k}(x)$, which is an infinite
  cyclic group.  Denoting a generator by $z$, we find
\begin{equation}\label{eq-commute-in-a-ball}
  |\{y\in\vois{1}{m}\,\mid\, [x,y]=1\}|=
  |\{h\in \Zz\,\mid\, d_{T_k}(1,z^h)\leq m\}|\leq 2m+1
\end{equation}
since (a standard fact in free groups) we have $d_{T}(1,z^h)\geq |h|$.
\par
Let $W$ be a set satisfying the assumption~(\ref{eq-two-step}), which
we assume to be not reduced to $\{1\}$. We denote $W_m=W\cap
\vois{1}{m}$. First, if $[x,y]=1$ for all $x$, $y\in W_m$, then by
taking a fixed $x\not=1$ in $W_m$, we get $W_m\subset C_{F_k}(x)\cap
\vois{1}{m}$, and~(\ref{eq-commute-in-a-ball}) gives the result.
\par
Otherwise, fix $x_0$ and $y_0$ in $W_m$ such that
$a=[x_0,y_0]\not=1$. Then, for all $y$ in $W_m$ we have
$[a,[x_0,y]]=1$. Noting that $d_T(1,[x_0,y])\leq 4m$, it follows again
from the above that the number of possible values of $[x_0,y]$ is at
most $8m+1$ for $y\in W_m$.
\par
Now for one such value $b=[x_0,y]$, we consider how many $y_1\in W_m$
may satisfy $[x_0,y_1]=b$. We have $[x_0,y]=[x_0,y_1]$ if and only if
$\varphi(y^{-1}y_1)=y^{-1}y_1$, where $\varphi(y)=x_0yx_0^{-1}$
denotes the inner automorphism of conjugation by $x_0$. Hence $y_1$
satisfies $[x_0,y_1]=b$ if and only if $\varphi(y^{-1}y_1)=y^{-1}y_1$,
which is equivalent to $y^{-1}y_1\in C_{F_k}(x_0)$. Since $y^{-1}y_1$
is an element at distance $\leq 2m$ of $1$ if $y$ and $y_1$ are in
$\vois{1}{m}$, applying~(\ref{eq-commute-in-a-ball}) gives
$$
  |\{y_1\in\vois{1}{m}\,\mid\, [x_0,y_1]=[x_0,y]\}|
  \leq 4m+1,
$$
and hence we have $|W_m|\leq (4m+1)(8m+1)$ in that case, which proves
the result.
\end{proof}

Using Corollary~\ref{cor-decay}, we finally verify fully Condition (3)
in Corollary~\ref{cor-bg}:

\begin{corollary}[Decay of probabilities, II]\label{cor-decay-2}
  Let $S\subset \SL_2(\Zz)$ be a symmetric set, $G$ the subgroup
  generated by $S$. Assume that $S$ freely generates $G$. Let $p$ be a
  prime such that the reduction $S_p$ of $S$ modulo $p$ generates
  $G_p=\SL_2(\Fp_p)$, and let $(X_n)$ be the random walk on
  $\cayley{G_p}{S_p}$ with $X_0=1$. Let
$$
\tau^{-1}=\log \max_{s\in S}{\|s\|}>0,
$$
as in Proposition~\ref{pr-girth}.  
\par
If $p$ is large enough, then for 
$$
n=\Bigl\lfloor \frac{\tau}{32}\log(p/2) \Bigr\rfloor,
$$
any $x\in \SL_2(\Fp_p)$ and any proper subgroup $H\subset
\SL_2(\Fp_p)$, we have
\begin{equation}\label{eq-prob-gamma-true}
  \proba(X_n\in xH)\leq |G_p|^{-\gamma}
\end{equation}
where
\begin{equation}\label{eq-def-gamma}
\gamma=\frac{\tau(\log(\kesten\sqrt{ |S|}))}{2^{9}}.
\end{equation}
\end{corollary}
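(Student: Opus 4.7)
The plan is to combine the Dickson classification of proper subgroups of $\SL_2(\Fp_p)$ recalled just before the statement with the girth/injectivity technology of Proposition~\ref{pr-girth} and the \emph{ad-hoc} bound of Proposition~\ref{pr-ad-hoc}. Since $n\leq(\tau/32)\log(p/2)\leq\tau\log(p/2)$, part~(1) of Proposition~\ref{pr-girth} applies: the ball $\vois{1}{n}$ in the free Cayley graph $\cayley{G}{S}$ injects into $\cayley{G_p}{S_p}$ under the reduction map $\phi\colon G\to G_p$. Hence $X_n=\phi(\tilde X_n)$, where $\tilde X_n$ is the corresponding random walk on the $|S|$-regular tree, so
$$
\proba(X_n\in xH)\leq N\cdot\max_{\tilde g}\proba(\tilde X_n=\tilde g)\leq N\cdot r^{-n},
$$
by Kesten's bound (with $r=|S|/(2\sqrt{|S|-1})$ as in Corollary~\ref{cor-decay}), where $N=|\vois{1}{n}\cap\phi^{-1}(xH)|$. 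The entire problem reduces to estimating $N$.

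The Dickson dichotomy says that either $|H|\leq 120$, or every $4$-tuple in $H$ satisfies~(\ref{eq-relation-h}). When $|H|\leq 120$, the injectivity of $\phi$ on $\vois{1}{n}$ gives $N\leq|xH|=|H|\leq 120$ for free. In the structured case I would fix any $\tilde x_0\in\vois{1}{n}\cap\phi^{-1}(xH)$ (if none exists, $N=0$), form the left translate $W=\tilde x_0^{-1}(\vois{1}{n}\cap\phi^{-1}(xH))\subset\vois{1}{2n}$, and observe that $\phi(W)\subset H$. Then for any $w_1,\ldots,w_4\in W$, the iterated commutator $[[w_1,w_2],[w_3,w_4]]$ reduces to $1$ in $G_p$ by~(\ref{eq-relation-h}); but this element has word length at most $32n\leq\tau\log(p/2)$ in $S$ (each commutator at most doubles the sum of the lengths of its arguments), so a second application of Proposition~\ref{pr-girth}(1) forces it to equal $1$ already in the free group $G$. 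Thus $W\subset G$ satisfies the hypothesis of Proposition~\ref{pr-ad-hoc} with $m=2n$, giving $N=|W|\leq 45(2n)^2$.

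Combining, $\proba(X_n\in xH)\leq \max(120,\,180n^2)\cdot r^{-n}$. For $p$ large enough one has $p/2\geq p^{1/2}$ and $|G_p|\leq p^3$, and the choice of $n$ yields $r^{-n}\leq r\cdot(p/2)^{-\tau(\log r)/32}\leq r\cdot p^{-\tau(\log r)/64}$. The desired exponent $3\gamma=3\tau(\log r)/512$ is comfortably smaller than the available $8\tau(\log r)/512$, so the polynomial prefactor (at worst $180n^2$ with $n=O(\log p)$) is absorbed for $p$ sufficiently large, delivering~(\ref{eq-prob-gamma-true}). The main conceptual obstacle — and the reason the factor $1/32$ appears in the definition of $n$ — is precisely the need to apply Proposition~\ref{pr-girth}(1) to the \emph{iterated} commutator of length $\sim 32n$, rather than to a single commutator of length $\sim 4n$.
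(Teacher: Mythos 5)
Your argument is correct and follows essentially the same route as the paper: lift to the tree via Proposition~\ref{pr-girth}, invoke Kesten's bound, split according to Dickson's dichotomy, and handle the structured case with Proposition~\ref{pr-ad-hoc}. The only (harmless) variation is that you translate the lifted coset by $\tilde x_0^{-1}$ and apply Proposition~\ref{pr-ad-hoc} in the ball of radius $2n$, whereas the paper first uses $\proba(X_n\in xH)^2\le\proba(X_{2n}\in H)$ to reduce to the subgroup itself before lifting; both devices lead to the same iterated-commutator length budget and hence to the same factor $1/32$ in the definition of $n$.
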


\begin{proof}
  We start by noting that
$$
\proba(X_n\in xH)^2\leq \proba(X_{2n}\in H)
$$
for all $x\in G_p$ and all subgroups $H\subset G_p$.
\par
Consider first the case where~(\ref{eq-relation-h}) holds for $H$. Let
$\tilde{H}\subset G$ be the pre-image of $H$ under reduction modulo
$p$. If $2n\leq \tau\log(p/2)$, then as in the proof of
Corollary~\ref{cor-decay}, we get
$$
\proba(X_{2n}\in H)=\proba(\tilde{X}_{2n}\in \tilde{H}).
$$
\par
Provided $n$ also satisfies the stronger condition $n\leq
m=\tfrac{1}{16}\tau\log(p/2)$, any commutator
$$
[[x_1,x_2],[x_3,x_4]]
$$
with $x_i\in \tilde{H}\cap\vois{1}{n}$ is an element at distance at
most $\tau\log(p/2)$ from $1$ in the tree $\cayley{G}{S}$, which
reduces to the identity modulo $p$ by~(\ref{eq-relation-h}), and
therefore must be itself equal to $1$. In other words, we can apply
Proposition~\ref{pr-ad-hoc} to $W=\tilde{H}\cap \vois{1}{m}$ to deduce
the upper bound
$$
|\tilde{H}\cap \vois{1}{m}|\leq 45 m^2.
$$
\par
We now take
$$
n=\frac{1}{32}\lfloor \tau\log(p/2)\rfloor,
$$
and we derive
$$
\proba(X_{2n}\in H) \leq |\tilde{H}\cap \vois{1}{m}| (\max_{x\in
  G_p}\proba(X_{2n}=x)) \leq 45m^2|G_p|^{-\gamma_1/16}
$$
(where $\gamma_1$ is given by~(\ref{eq-ggamma1}), as in
Corollary~\ref{cor-decay}), and hence
\begin{equation}\label{eq-large-enough5}
\proba(X_n\in xH)\leq \frac{\sqrt{45}}{16}\tau(\log
p/2)|G_p|^{-\gamma_1/32} \leq |G_p|^{-\gamma_1/64}
\end{equation}
provided $p$ is large enough, which is the conclusion in that case.
\par
On the other hand, if~(\ref{eq-relation-h}) does not hold, we have
$|H|\leq 120$, and for the same value of $n$ we get
\begin{equation}\label{eq-large-enough6}
  \proba(X_n\in xH)\leq 120|G_p|^{-\gamma_1/32}\leq |G_p|^{-\gamma_1/64}
\end{equation}
for $p$ large enough, by Corollary~\ref{cor-decay} with $c=1/32$.
This gives again the desired result.
\end{proof}

The following upper-bound on $\gamma$ was suggested by the referee:

\begin{lemma}\label{lm-bound-gamma}
  With notation as in Corollary~\ref{cor-decay-2}, we have
$$
\gamma\leq 2^{-5}.
$$
\end{lemma}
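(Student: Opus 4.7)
The plan is to unpack the definition of $\gamma$ and reduce $\gamma\leq 2^{-5}$ to a concrete inequality in $M := \max_{s\in S}\|s\|$, then verify it using two ingredients: the integrality of the entries of the matrices in $S$, and a lower bound $M\geq \phi := (1+\sqrt{5})/2$.

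Unfolding~(\ref{eq-def-gamma}) with $\tau^{-1}=\log M$, the bound $\gamma\leq 2^{-5}$ is equivalent (after squaring and clearing the $2^9$) to
$$\tfrac{4}{3}|S|\leq M^{32}.$$
To bound $|S|$, I would invoke the second inequality of~(\ref{eq-prop-norm}): every entry of every $s\in S$ satisfies $|s_{i,j}|\leq \|s\|\leq M$; since these entries are integers, $|S|\leq (2\lfloor M\rfloor+1)^4\leq (3M)^4 = 81 M^4$ (valid for $M\geq 1$).

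The key step is showing $M\geq \phi$. The hypothesis $\tau>0$ forces $M>1$, and for any $s\in\SL_2(\Zz)$, writing $T:=\sum_{i,j} s_{i,j}^2\in\Zz$, the two eigenvalues of $s^Ts$ multiply to $(\det s)^2=1$ and sum to $T$, hence $\|s\|^2=(T+\sqrt{T^2-4})/2$. The case $T=2$ forces $s^Ts=I$, so $s$ is one of the four orthogonal integer matrices $\pm I,\,\pm\begin{pmatrix}0&1\\-1&0\end{pmatrix}$, all of norm $1$. Thus any $s$ with $\|s\|>1$ has $T\geq 3$, which gives $\|s\|^2\geq (3+\sqrt{5})/2=\phi^2$, whence $M\geq\phi$.

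Combining the two bounds, $\gamma\leq 2^{-5}$ reduces to the numerical inequality $108\,M^4\leq M^{32}$ for $M\geq\phi$, i.e.\ $M^{28}\geq 108$. Since $\phi^{14}>800$, this holds with enormous slack. There is no real obstacle: the stated bound on $\gamma$ is very loose, and the only slightly delicate point is the lower bound $M\geq\phi$; the rest is a routine computation.
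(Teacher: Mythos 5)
Your proof is correct, and it follows the same overall strategy as the paper: bound $|S|$ by counting integer matrices with entries at most $M$ in absolute value (the paper does this via the ball of radius $n$, getting $\log(|S|-1)\leq 4\log(2\Delta+1)$, which is the same count), and then feed in a lower bound on $M=\max_{s\in S}\|s\|$. Where you genuinely diverge is in that second step. The paper only proves $M\geq\sqrt{2}$, by a case analysis: either some entry has absolute value $\geq 2$, or $S$ sits inside the $20$ matrices with entries in $\{-1,0,1\}$, whose infinite-order members are parabolic of norm $\geq\sqrt 2$; the resulting final inequality $2^{-7}\log(2\sqrt2+1)/\log\sqrt2\leq 2^{-5}$ is then numerically rather tight. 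Your observation that $\Tr(s^{T}s)$ is an integer, equal to $2$ only for the four norm-one rotations, gives the sharper and cleaner bound $M\geq(1+\sqrt5)/2$ directly from $\tau>0$, without invoking parabolicity or the free-generation hypothesis, and it leaves enormous slack in the final inequality $M^{28}\geq 108$. This is a small but genuine improvement in robustness; the only point worth stating explicitly is that $\tau>0$ (i.e.\ $M>1$) is indeed part of the setup of Corollary~\ref{cor-decay-2}, which licenses the step from $T=2$ being excluded to $T\geq 3$.
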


\begin{proof}
For $n\geq 1$, the cardinality of the ball $\vois{1}{n}$ is at least
$(|S|-1)^n$, and is at most
$$
|\{g\in M_2(\Zz)\,\mid\, |g_{i,j}|\leq (\max_{s\in S}{\|s\|})^n,\text{
  for } 1\leq i,j\leq 2\}
$$
by~(\ref{eq-prop-norm}). Thus, denoting $\Delta=\max_{s\in S}{\|s\|}$,
we find
$$
\log (|S|-1)\leq 4\log(2\Delta+1),
$$
and hence
$$
\gamma=2^{-9}\frac{\log(\kesten\sqrt{|S|})}{\log(\Delta)} \leq
2^{-7}\frac{\log(2\Delta+1)}{\log (\Delta)}.
$$
\par
Now we note that either $\Delta\geq 2$, or $S$ is contained in the
finite set of matrices in $\SL_2(\Zz)$ where all coefficients are in
$\{-1,0,1\}$. There are $20$ such matrices, and all those which are
not of finite order are parabolic.  For these, we have $\|s\|\geq
\sqrt{2}$, and therefore $\Delta\geq \sqrt{2}$ in all cases, and hence
$$
\gamma\leq 2^{-7}\frac{\log (2\sqrt{2}+1)}{\log(\sqrt{2})}
\leq 2^{-5}.
$$
\end{proof}

\subsection{Summary}
\label{ssec-summary}

We can now summarize how to obtain an explicit spectral gap, for large
enough $p$, in the situation of Theorem~\ref{th-bg-bis}, finishing the
proof. We then explain how to quantify the condition on $p$.
\par
We first consider the case where $S\subset \SL_2(\Zz)$ freely
generates a free group of rank $\geq 2$ (in which case it is
automatically Zariski-dense in $\SL_2$).
\par
\medskip
\par
\underline{Step 1 (when $p$ is large enough).} We have
$$
\mdim{G_p}=\frac{p-1}{2}
$$
for $p\geq 3$. In particular, $\mdim{G_p}\geq |G_p|^d$ for any $d<1/3$
provided $p$ is large enough in terms of $d$. Moreover, by
Theorem~\ref{th-helfgott}, those groups are $\delta$-flourishing with
$\delta=1/3024$.
\par
For the random walk $(X_n)$ on $G_p$ associated to the generating set
$S_p$, with $X_0=1$, we have
$$
\proba(X_{k}\in xH)\leq |G|^{-\gamma}
$$
when 
$$
k=\Bigl\lfloor \frac{\tau}{32}\log(p/2)\Bigr\rfloor\leq
\frac{\tau}{96}\log(|G_p|)
$$
with 
$$
\tau^{-1}=\log \max_{s\in S}{\|s\|},\quad\quad
\gamma=\frac{\tau\log (\kesten\sqrt{|S|})}{2^{9}}.
$$
by~(\ref{eq-def-tau}) and~(\ref{eq-def-gamma}). Thus in
Corollary~\ref{cor-bg}, we can take $c=1/96$. The number of times we
apply the basic $L^2$-flattening inequality is bounded by
$$
j\leq 8\max\Bigl( \frac{2c_2+1}{\delta\gamma},\frac{4c_2}{d}
\Bigr)\leq 8\max\Bigl( \frac{1875\cdot 3024}{\gamma},15000\Bigr)
=48060000\gamma^{-1}
$$
(using Lemma~\ref{lm-bound-gamma}) and the spectral gap satisfies
$$
\lambda_1(\Gamma)\geq 1-\exp\Bigl(-\frac{d}{2^{j+3}c}\Bigr)
= 1-\exp\Bigl(-\frac{d}{2^{j+3}c}\Bigr)
\geq \frac{d}{2^{j+4}c},
$$
for all $p$ large enough. For $p\geq 17$, we take $d=1/4$, and this
gives
$$
\lambda_1(\Gamma)\geq \frac{d}{2^{j+4}c} \geq \frac{3}{2^{j+1}}\geq
2^{-2^{26}\gamma^{-1}}.
$$
\par
Except that we incorporated the factor $2^9$ from the current value of
$\gamma$ to the constant factor (for esthetic reasons), this
gives~(\ref{eq-stated-sg}).
\par
\medskip
\par
\underline{Step 2 (how large is ``large enough'').}  We gather here,
as a series of inequalities to be satisfied by $p$, the conditions
under which we can apply the previous lower bound. These we gather
from the proofs of the results of this section. First come
inequalities that make explicit the condition that $|G|$ be large
enough in Theorem~\ref{th-l2-flatten}, which are easily translated
into conditions on $p$ since $|\SL_2(\Fp_p)|=p(p^2-1)$.
\begin{itemize}
\item In order that~(\ref{eq-large-enough1})
  contradict~(\ref{eq-bg-cond3}), we must have
$$
|G|^{\gamma-\delta_0(1+c_2)}>4c_1.
$$
\item In order that~(\ref{eq-large-enough2}) contradict the growth
  alternative of Helfgott's Theorem, it is enough that
$$
|G|^{\gamma_1}>4c_1\Bigl\{
c_1(4c_1)^{\gamma_1^{-1}}
\Bigr\}^{(\delta-c_2\delta_0\gamma_1^{-1})^{-1}}
$$
where\footnote{\ This is not the same $\gamma_1$ that occurs in the
  proof of the decay of probabilities.}
$\gamma_1=\gamma-(1+c_2)\delta_0$ (in view
of~(\ref{eq-hij-not-too-small})).
\item In order that~(\ref{eq-large-enough3})
  give~(\ref{eq-large-enough-rpp}) when $\delta_1$
  satisfies~(\ref{eq-delta1}), it is enough that
$$
|G|^{\eps-2c_2\delta_0}\geq (\log 3|G|)^4,
$$
and that
$$
|G|^{\delta_0}\geq c_1^{-2}(\log 3|G|)^4.
$$
\item In order that~(\ref{eq-large-enough4}) hold, we must have
\begin{equation}\label{eq-worse}
\min(|G|^{d/4},|G|^{\delta_1/2})\geq c_3.
\end{equation}
\end{itemize}
\par
Now we list the conditions needed to apply the Bourgain-Gamburd
criterion in the situation of Theorem~\ref{th-bg-bis}, when $S$ freely
generates a free group of rank $|S|/2\geq 2$.
\begin{itemize}
\item We need
$$
p\geq \max\Bigl(17,  2\exp\Bigl(\frac{2}{c\tau}\Bigr)\Bigr)
$$
by~(\ref{eq-large-enough-ok1}).
\item In order that the last inequality in~(\ref{eq-large-enough5})
  hold, as well as~(\ref{eq-large-enough6}), it is enough that
$$
|\SL_2(\Fp_p)|^{\gamma}\geq \max\Bigl(120,\Bigl(\log
\frac{p}{2}\Bigr)\Bigr).
$$
\end{itemize}
\par
\begin{remark}
  Below in Section~\ref{ssec-pari} is found a straightforward
  \textsc{Pari/GP}~\cite{pari} that computes the lower-bound of Step 1
  for the spectral gap, given the set of matrices $S$, and that can
  also be used to determine for which $p$ the bound is known to be
  applicable.
\end{remark}

We finally explain how to reduce the full statement of
Theorem~\ref{th-bg-bis} to the case where the given symmetric subset
$S\subset \SL_2(\Zz)$ generates a free group, which is the one treated
by the Bourgain-Gamburd method.
\par
For a given $S\subset \SL_2(\Zz)$ which generates a Zariski-dense
subgroup $G$ of $\SL_2$, the intersection $G\cap \Gamma(2)$, where
$\Gamma(2)$ is the principal congruence subgroup modulo $2$, is a free
subgroup of finite index in $G$. From a free generating set, one can
extract two generators $s_1$, $s_2\in G$ to obtain a free subgroup of
rank $2$ of $G$, say $G_1$ (since $G\cap \Gamma(2)$ has finite index
in $G$, it is still Zariski-dense, and hence has rank at least
$2$). This subgroup is still Zariski-dense. We can then compare the
expansion for the Cayley graphs of $\SL_2(\Fp_p)$ with respect to $S$
and to $S_1=\{s_1^{\pm 1},s_2^{\pm 1}\}$.
\par
For $p$ large enough so that $G_p=\SL_2(\Fp_p)$ is generated both by
$S$ modulo $p$ and $S_1$ modulo $p$, we have
$$
d(x,y)\leq Cd_1(x,y)
$$
where $d_1(\cdot,\cdot)$ is the distance in the Cayley graph
$\Gamma_1=\cayley{G_p}{S_1\mods{p}}$, and $d(\cdot,\cdot)$ the
distance in $\Gamma_2=\cayley{G_p}{S\mods{p}}$ and $C$ is the maximum
of the word length of $s_1$, $s_2$ with respect to $S$. Hence, by a
standard lemma (see, e.g.,~\cite[Lemma~3.1.16]{expanders},
applied to $\Gamma_1$ and $\Gamma_2$ with $f$ the identity), the
expansion constants satisfy
$$
h(\cayley{G_p}{S\mods{p}})=h(\Gamma_2)
\geq w^{-1}h(\cayley{G_p}{S_1\mods{p}})
$$
with 
$$
w=4\sum_{j=1}^{\lfloor C\rfloor}|S|^{j-1}.
$$
\par
In particular, using Theorem~\ref{th-bg-bis} for $G_1$, we obtain the
expansion property for $G$, and we can bound the spectral gap
explicitly once we know expressions for the generators $s_1$, $s_2$ in
terms of those in $S$.
\par
As the referee pointed out, Breuillard and
Gelander~\cite[Th. 1.2]{breuillard-gelander} have proved a strong
uniform version of the Tits alternative which implies that there
exists an absolute constant $N\geq 1$ such that, for any Zariski-dense
subgroup $G\subset \SL_2(\Zz)$, and for any symmetric generating set
$S\subset G$, the combinatorial ball of radius $N$ in $\cayley{G}{S}$
contains two elements which generate a free subgroup of rank $2$ of
$G$. If a concrete value of $N$ was known (which does not seem to be
the case yet), one could use the above argument to state a version of
the second part of Theorem~\ref{th-bg-bis} without the assumption of
freeness.


\subsection{Diameter bound}

We can now also prove quickly Corollary~\ref{cor-diameter}. Let
$S_1=S\cup\{1\}$. By Proposition~\ref{pr-girth}, if we let
$$
r=\Bigl\lfloor \tau\log\frac{p}{2}\Bigr\rfloor,
$$
where $\tau$ is defined by~(\ref{eq-def-tau}), the size of
$\nfold{S_1}{r}$ is at least the size of a ball of radius $r$ in a
$|S|$-regular tree, which is well-known to be at least $s^r$, where
$s=|S|-1$.
\par
For $p\geq 17$, this gives
$$
\nfold{S_1}{r}\geq s^{-1}\Bigl(\frac{p}{2}\Bigr)^r\geq
s^{-1}|\SL_2(\Fp_p)|^{\tau(\log r)/4},
$$
and if $p\geq \exp(2\tau^{-1})$, this becomes
$$
\nfold{S_1}{r}\geq |\SL_2(\Fp_p)|^{\delta_2},
$$
where
$$
\delta_2=\frac{\tau(\log s)}{8}>0.
$$
\par
Now we apply repeatedly Helfgott's Theorem with
$H=\nfold{S_1}{r}$. For $j$ such that
$$
j\geq \frac{\log (\delta_2^{-1})}{\log(1+\delta)},
$$
the $3^j$-fold product of $H$ must be equal to $\SL_2(\Fp_p)$, and
hence we get
$$
\diam\cayley{\SL_2(\Fp_p)}{S}\leq 3^jr\leq 3^{j-1}(\log
|\SL_2(\Fp_p)|),
$$
and taking
$$
j=\Bigl\lceil \frac{\log (\delta_2^{-1})}{\log(1+\delta)}\Bigr\rceil,
$$
this gives the bound
$$
\diam\cayley{\SL_2(\Fp_p)}{S}\leq
3^{\log(\delta_2^{-1})/\log(1+\delta)}
(\log |\SL_2(\Fp_p)|).
$$



\subsection{Script}\label{ssec-pari}

Here is a \textsc{Pari/GP}~\cite{pari} script that performs the
computations needed to obtain an explicit spectral for
Theorem~\ref{th-bg-bis}, given as input a set of matrices $S$ which
generate a free group (this condition is not checked).
\par
\lstset{breaklines=true,basicstyle=\ttfamily\footnotesize}
\lstinputlisting{gap-imrn.gp}

\section{Appendix: proof of Theorem~\ref{th-large-energy}}\label{app-combin}


In this appendix, we sketch the proof of
Theorem~\ref{th-large-energy}, following very (essentially) line by
line Tao's paper~\cite{tao}. The presentation is therefore highly
condensed, though we use a ``diagram'' notation which should make it
relatively easy to check how the values of the constants evolve.
\par
Below all sets are subsets of a fixed finite group $G$, and are all
non-empty.

\subsection{Diagrams}

We will use the following diagrammatic notation:
\par
\begin{enumerate}
\item If $A$ and $B$ are sets with Ruzsa distance 
$$
d(A,B)=\log\Bigl(\frac{|A\cdot B^{-1}|}{\sqrt{|A||B|}}\Bigr)
$$ 
such that $d(A,B)\leq \log \alpha$, we write
$$
\xymatrix{
A  \rd{\alpha} & B
},
$$
\item If $A$ and $B$ are sets with $|B|\leq \alpha |A|$, we write
$$
\xymatrix{
B \fb{\alpha} &A
},
$$
and in particular if $|X|\leq \alpha$, we write
$\xymatrix{X \fb{\alpha} &1}$,
\item If $A$ and $B$ are sets with $e(A,B)\geq 1/\alpha$, we write
$$
\xymatrix{
A \me{\alpha} & B
},
$$
\item If $A\subset B$, we also write $\xymatrix{A \sub & B}$.
\end{enumerate}
\par
\medskip
\par
The following rules are easy to check (in addition to some more
obvious ones which we do not spell out):
\begin{enumerate}
\item From
$$
\xymatrix{
A\rd{\alpha} & B
}
$$
we can get
$$
\xymatrix{
A\fb{\alpha^2} & B
},\quad
\xymatrix{
B\fb{\alpha^2} & A
}.
$$
\item (Ruzsa's triangle inequality) From
$$
\xymatrix{
A  \rd{\alpha_1} & B \rd{\alpha_2} & C
}
$$
we get
$$
\xymatrix{
A\rd{\alpha_1\alpha_2} & C
}.
$$
\item  From
$$
\xymatrix{
C  \fb{\alpha_1} & B \fb{\alpha_2} & A
}
$$
we get
$$
\xymatrix{
C\fb{\alpha_1\alpha_2} & A
}.
$$
\item (``Unfolding edges'') From
$$
\xymatrix{
B  \fb{\alpha} \crd{\beta} & A
}
$$
we get
$$
\xymatrix{ AB^{-1}\lfb{\sqrt{\alpha}\beta} &&A }
$$
(note that by the first point in this list, we only need to have
$$
\xymatrix{B \rd{\beta} & A}
$$
to obtain the full statement with $\alpha=\beta^2$, which is usually
qualitatively equivalent.)
\item (``Folding'') From
$$
\xymatrix{
AB^{-1}  \fb{\alpha} & A \fb{\beta} & B
}
$$
we get
$$
\xymatrix{ A\lrd{\alpha\beta^{1/2}} &&B }.
$$
\end{enumerate}

Note that the relation $\xymatrix{A\fb{\alpha} &B}$ is purely a matter
of the size of $A$ and $B$, while the other arrow types depend on
structural relations involving the sets (for $\xymatrix{A\sub & B}$)
and product sets (for $\xymatrix{A\rd{\alpha} &B}$ or
$\xymatrix{A\me{\alpha}&B}$).

\subsection{Proofs}

First we state the Ruzsa covering lemma~\cite[Lemma 3.6]{tao} in our
language:

\begin{theorem}[Ruzsa]
If 
$$
\xymatrix{ AB\fb{\alpha} & A },
$$
there exists a set $X$ which satisfies
$$
\xymatrix{
X\sub &B
},\quad
\xymatrix{
X\fb{\alpha}  &1
},\quad
\xymatrix{
B\sub &A^{-1}AX
},
$$
and symmetrically, if
$$
\xymatrix{BA\fb{\alpha} & A},
$$
there exists $Y$ with
$$
\xymatrix{
Y\sub &B
},\quad
\xymatrix{
Y\fb{\alpha}  &1
},\quad
\xymatrix{
B\sub &YAA^{-1}
}.
$$
\end{theorem}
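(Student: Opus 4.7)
The plan is to apply the standard greedy maximal disjointness argument. I would choose $X\subseteq B$ to be a subset which is \emph{maximal} (with respect to inclusion) among subsets of $B$ having the property that the left translates $\{Ax : x\in X\}$ are pairwise disjoint. Everything required will then follow from this single construction together with two short counting/covering observations; note that $A$ is non-empty by the running convention of the section.

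For the size bound $|X|\leq\alpha$: since the translates $Ax$ for $x\in X$ are pairwise disjoint and each has cardinality $|A|$, their union satisfies $|AX|=|A|\cdot|X|$; combined with the inclusion $AX\subseteq AB$ and the hypothesis $|AB|\leq\alpha|A|$, this yields $|A|\cdot|X|\leq\alpha|A|$ and hence $|X|\leq\alpha$.

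For the covering $B\subseteq A^{-1}AX$: fix $b\in B$. Either $b\in X$, in which case $b\in A^{-1}AX$ since $1\in A^{-1}A$, or else by maximality of $X$ the family $\{Ay : y\in X\cup\{b\}\}$ fails to be pairwise disjoint, forcing $Ab\cap Ax\neq\emptyset$ for some $x\in X$; writing $a_1b=a_2x$ with $a_i\in A$ then gives $b=a_1^{-1}a_2x\in A^{-1}AX$. The symmetric statement is proved by the identical argument applied to the right translates $\{xA : x\in X\}$, which yields $Y\subseteq B$ with $|Y|\leq\alpha$ and $B\subseteq YAA^{-1}$. There is really no obstacle to overcome: the whole content of the proof is the choice of a maximal disjoint family, after which the size bound is a one-line count and the covering is forced directly by maximality.
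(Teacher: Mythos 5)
Your proof is correct: the maximal disjoint family of translates, the count $|A|\cdot|X|=|AX|\leq|AB|\leq\alpha|A|$, and the maximality argument (including the case $b\in X$) together give exactly the Ruzsa covering lemma as stated. The paper does not reprove this result but cites Tao's Lemma~3.6, whose proof is precisely this standard greedy argument, so your approach coincides with the one the paper relies on.
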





Next we have the link between sets with small tripling and approximate
subgroups~\cite[Th. 3.9 and Cor. 3.10]{tao}:

\begin{theorem}\label{th-app-tripling}
Let $A=A^{-1}$ with $1\in A$ and
$$
\xymatrix{\nfold{A}{3}\fb{\alpha} & A}.
$$
\par
Then $H=\nfold{A}{3}$ is a $(2\alpha^{5})$-approximate subgroup
containing $A$.
\end{theorem}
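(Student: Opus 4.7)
The plan is to verify the three defining properties of a $(2\alpha^5)$-approximate subgroup for $H = \nfold{A}{3}$ directly from the hypotheses. Symmetry $H = H^{-1}$ follows from $A = A^{-1}$, containment $1 \in H$ follows from $1 \in A$, and of course $A \subset \nfold{A}{3} = H$ since $1 \in A$. All the work lies in producing the symmetric covering set $X$.

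First I would invoke Ruzsa's inequality (Proposition~\ref{pr-ruzsa}(1)): from $|\nfold{A}{3}|/|A| \leq \alpha$ we obtain the quantitative growth bound $|\nfold{A}{n}| \leq \alpha^{n-2} |A|$ for all $n \geq 3$. The two instances I need are $|\nfold{A}{6}| \leq \alpha^4 |A|$ and, more importantly, $|\nfold{A}{7}| \leq \alpha^5 |A|$.

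Next I would apply the (second form of the) Ruzsa covering lemma with the roles played by ``$A$'' and ``$B$'' in that lemma taken to be $A$ itself and $\nfold{A}{6}$ respectively. The hypothesis of the covering lemma is
$$
|\nfold{A}{6} \cdot A| = |\nfold{A}{7}| \leq \alpha^5 |A|,
$$
i.e., $\xymatrix{\nfold{A}{6}\cdot A\fb{\alpha^5} & A}$. It produces a set $Y \subset \nfold{A}{6}$ with $|Y| \leq \alpha^5$ such that
$$
\nfold{A}{6} \subset Y \cdot A \cdot A^{-1} = Y \cdot \nfold{A}{2} \subset Y \cdot \nfold{A}{3},
$$
where the final inclusion uses $1 \in A$ so that $\nfold{A}{2} \subset \nfold{A}{3}$.

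Finally I would set $X = Y \cup Y^{-1}$. This is manifestly symmetric, satisfies $|X| \leq 2|Y| \leq 2\alpha^5$, and lies inside $\nfold{A}{6} \cup (\nfold{A}{6})^{-1} = \nfold{A}{6} = \nfold{H}{2}$ (using $A = A^{-1}$). The covering property
$$
H \cdot H = \nfold{A}{6} \subset Y \cdot \nfold{A}{3} \subset X \cdot H
$$
holds, and the symmetric counterpart $H \cdot H \subset H \cdot X$ follows automatically by taking inverses, using $H = H^{-1}$ and $X = X^{-1}$, as noted in the paper immediately after~(\ref{eq-def-app}). There is no genuine obstacle: the only choice that matters is to cover $\nfold{A}{6}$ rather than iterating covering on smaller sets, since this lets the exponent land at exactly $5$, which is what forces the constant $2\alpha^5$ rather than something worse.
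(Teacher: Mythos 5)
Your proof is correct and follows essentially the same route as the paper: Ruzsa's inequality gives $|\nfold{A}{7}|\leq\alpha^5|A|$, the Ruzsa covering lemma applied to $\nfold{A}{6}$ produces a covering set of size at most $\alpha^5$, and symmetrizing costs the factor $2$. The only cosmetic difference is that you use the ``left'' form of the covering lemma ($B\subset YAA^{-1}$) where the paper uses the ``right'' form ($B\subset A^{-1}AX$) and then passes to inverses; the bound $|\nfold{A}{6}|\leq\alpha^4|A|$ you mention is never actually used, but that is harmless.
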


\begin{proof}
We have first
$$
\xymatrix{H\fb{\alpha} & A},\quad
\xymatrix{A\sub & H}.
$$
\par
Then by Ruzsa's lemma~\ref{pr-ruzsa}, we get
$$
\xymatrix{A\nfold{H}{2}=\nfold{A}{7}\fb{\alpha^{5}}& A},
$$
and by the Ruzsa covering lemma there exists $X$ with
$$
\xymatrix{X\sub & \nfold{H}{2}},\quad \xymatrix{X \fb{\alpha^{5}} & 1},
$$
such that
$$
\xymatrix{\nfold{H}{2}\sub & \nfold{A}{2} X \sub & \nfold{A}{3} X=HX}.
$$
\par 
Taking $X_1=X\cup X^{-1}$, we get a symmetric set with
$$
\xymatrix{X_1\sub & \nfold{H}{2}},\quad \xymatrix{X_1 \fb{2\alpha^{5}} & 1},
$$
and
$$
\xymatrix{\nfold{H}{2}\sub & HX},\quad \xymatrix{\nfold{H}{2}\sub & XH},
$$
which are the properties defining a $(2\alpha^{5})$-approximate
subgroup.
\end{proof}

The next result is the explicit form of~\cite[ Th. 4.6, (i) implies
(ii)]{tao}:

\begin{theorem}\label{th-46}
Let $A$ and $B$ with
$$
\xymatrix{A\rd{\alpha} &B^{-1}}
$$
\par
Then there exists a $\gamma$-approximate subgroup $H$ and a set $X$
with
$$
\xymatrix{X\fb{\gamma_1}& 1},\quad \xymatrix{A\sub &XH},\quad
\xymatrix{B\sub & HX},\quad 
\xymatrix{H\fb{\gamma_2} & A},
$$
where
$$
\gamma\leq 2^{21}\alpha^{80},
\quad\quad
\gamma_1\leq 2^{28}\alpha^{104},
\quad\quad
\gamma_2\leq 8\alpha^{14}.
$$
\par
Furthermore, one can ensure that
\begin{equation}\label{eq-trip-1}
\xymatrix{\nfold{H}{3}\lfb{2^{10}\alpha^{40}}&& H}.
\end{equation}
\end{theorem}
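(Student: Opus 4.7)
The plan is to deduce Theorem~\ref{th-46} from Theorem~\ref{th-app-tripling} applied to a suitable symmetric core, combined with Ruzsa's covering lemma to cover $A$ and $B$ by translates of the resulting approximate subgroup. The Ruzsa distance hypothesis is precisely $|AB| \leq \alpha\sqrt{|A||B|}$, and this bound will be propagated through the Ruzsa triangle inequality to control all the auxiliary product sets that arise.

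First I would symmetrize. Since the Ruzsa distance is symmetric (as $|AB|=|B^{-1}A^{-1}|$), concatenating $A\rd{\alpha}B^{-1}$ with its reverse through the triangle inequality gives $d(A,A)\leq 2\log\alpha$ and $d(B,B)\leq 2\log\alpha$, so both $A$ and $B$ have doubling at most $\alpha^4$. Set $S = A^{-1}A$, which is symmetric, contains the identity, and satisfies $|A|\leq|S|\leq\alpha^2|A|$. Iterating the triangle inequality on alternating products of $A$ and $A^{-1}$ bounds $|\nfold{S}{3}|=|A^{-1}AA^{-1}AA^{-1}A|$ by $\alpha^c|A|$ for an explicit $c$, giving a tripling constant $\beta$ for $S$ of the form $\alpha^{c'}$ that is readily tracked in the diagram calculus established above. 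Applying Theorem~\ref{th-app-tripling} to $S$ then yields that $H:=\nfold{S}{3}$ is a $(2\beta^5)$-approximate subgroup, and matching the claimed $\gamma\leq 2^{21}\alpha^{80}$ reduces to verifying $\beta\leq 2^4\alpha^{16}$, i.e.\ an exponent chase through the preceding step.

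With $H$ in hand, the coverings follow from Ruzsa's covering lemma. Controlling $|AH|=|A\cdot\nfold{S}{3}|$ by further triangle inequalities and invoking the covering lemma produces $X_1$ with $|X_1|\leq\gamma_1$ and $A\subset X_1H$; a symmetric argument using $|HB|$ gives $B\subset HX_2$, and the two can be merged into a single $X$ of comparable cardinality. The additional tripling bound~\eqref{eq-trip-1} is another manifestation of the same Pl\"unnecke--Ruzsa chain, since $|\nfold{H}{3}|/|H|=|\nfold{S}{9}|/|\nfold{S}{3}|$ is bounded by one more sequence of triangle inequalities starting from the estimate on $|\nfold{S}{3}|$.

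The main obstacle is bookkeeping rather than strategy: every ingredient is already in hand, but matching the precise constants $\gamma$, $\gamma_1$, $\gamma_2$, and the final tripling constant $2^{10}\alpha^{40}$ requires a careful choice of which intermediate sets to use and in what order to compose triangle inequalities, since each reorganization of the computation changes exponents multiplicatively. The diagram notation introduced earlier in this appendix is designed precisely to make this bookkeeping tractable, and I would execute the proof by composing those diagrams step by step, following Tao~\cite[Th.~4.6]{tao} essentially line by line.
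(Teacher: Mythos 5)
There is a genuine gap at the heart of your argument, and it is not a matter of bookkeeping. You propose to take $S=A^{-1}A$ and to bound the tripling constant of $S$ ``by iterating the triangle inequality'' starting from the doubling bound $|AA^{-1}|\leq \alpha^2|A|$. In a non-commutative group this step is false: small doubling of $A$ does not control the product sets $\nfold{(A^{-1}A)}{2}$ or $\nfold{(A^{-1}A)}{3}$ by any power of $\alpha$. Take $A=H\cup\{x\}$ with $H$ a subgroup and $x$ an element such that $H\cap x^{-1}Hx=\{1\}$; then $|AA^{-1}|\leq 3|A|+1$ (so the hypothesis holds with $\alpha$ an absolute constant, e.g.\ with $B=A^{-1}$), yet $A^{-1}A\supset Hx$ and hence $\nfold{(A^{-1}A)}{2}\supset HxHx$, which has cardinality $|H|^2\approx|A|^2$. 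So the tripling constant of $S=A^{-1}A$ is of order $|A|$, not $\alpha^{c'}$, and Theorem~\ref{th-app-tripling} cannot be applied to it. The Ruzsa triangle inequality only transfers bounds between sets of the form $XY^{-1}$; it cannot bootstrap doubling into tripling for an arbitrary set, and indeed the failure of ``doubling implies tripling'' in non-commutative groups is precisely the difficulty that Tao's paper is designed to circumvent.

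The missing ingredient is Tao's Proposition 4.5, which the paper's proof invokes at exactly this point: instead of $A^{-1}A$ one takes the set of \emph{popular differences} $S=\{x\in G\mid |A\cap Ax|>(2\alpha^2)^{-1}|A|\}$, which is symmetric, contains $1$, satisfies $|A|\leq 2\alpha^2|S|$, and --- crucially --- satisfies $|A\nfold{S}{n}A^{-1}|\leq 2^n\alpha^{4n+2}|A|$ for \emph{all} $n\geq 1$. It is this last family of bounds (not any consequence of the triangle inequality alone) that gives $\nfold{S}{3}$ tripling constant $16\alpha^{16}$, whence $H=\nfold{S}{3}$ is a $2(16\alpha^{16})^5=2^{21}\alpha^{80}$-approximate subgroup by Theorem~\ref{th-app-tripling}, and that also yields~\eqref{eq-trip-1} via $\nfold{H}{3}=\nfold{S}{9}\subset A\nfold{S}{9}A^{-1}$. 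Your second half (covering $A$ and $B$ by translates of $H$ via the Ruzsa covering lemma, and merging the two covering sets into one $X$) matches the paper and would go through once $H$ is constructed correctly; you even reverse-engineered the right numerology ($\beta\leq 2^4\alpha^{16}$). But the construction of $S$ cannot be the full difference set, and without the popular-difference set the proof does not get off the ground.
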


\begin{proof}
From
$$
\xymatrix{
A  \fb{1} \crd{\alpha^2} & A
},
$$
we get first
$$
\xymatrix{AA^{-1}\fb{\alpha^2} & A}.
$$
\par
By~\cite[Prop. 4.5]{tao}, we find a set $S$ with\footnote{\ The
  property $1\in S$ is not explicitly stated in~\cite{tao}, but
  follows from the explicit definition used by Tao, namely $S=\{x\in
  G\,\mid\, |A\cap Ax|>(2\alpha^2)^{-1}|A|\}$.} $1\in S$ and $S=S^{-1}$
such that
$$
\xymatrix{A\fb{2\alpha^2} & S},\quad
\xymatrix{A\nfold{S}{n}A^{-1}\lfb{2^n\alpha^{4n+2}} && A}
$$
for all $n\geq 1$.  In particular, we get
$$
\xymatrix{AS^{-1}=AS\lfb{2\alpha^6}&& A},\quad
\xymatrix{S\lfb{2\alpha^6}&& A}.
$$
\par
We have
$$
\xymatrix{\nfold{S}{3}\lfb{8\alpha^{14}}&& A\lfb{2\alpha^2} && S},
$$
and Theorem~\ref{th-app-tripling} says that $H=\nfold{S}{3}$ is a
$\gamma$-approximate subgroup containing $S$, with
$\gamma=2(16\alpha^{16})^{5}=2^{21}\alpha^{80}$, and (as we see)
$$
\xymatrix{H\lfb{8\alpha^{14}} && A}.
$$
\par
Moreover, we have
$$
\xymatrix{\nfold{H}{3}=\nfold{S}{9}\sub & A\nfold{S}{9}A^{-1} 
\lfb{2^9\alpha^{38}} &&   A\fb{2\alpha^2} & S},
$$
which gives~(\ref{eq-trip-1}).
\par
Now from
$$
\xymatrix{AH=A\nfold{S}{3}\lfb{8\alpha^{14}}&& A\lfb{2\alpha^2} && S
  \fb{1} & H},
$$
we see by the Ruzsa covering lemma that there exists $Y$ with
$$
\xymatrix{Y\sub & A},\quad
\xymatrix{Y\lfb{16\alpha^{16}} && 1},\quad
\xymatrix{A\sub &YHH}.
$$
\par
By definition of an approximate subgroup, there exists $Z$ with
$$
\xymatrix{Z\fb{\gamma} & 1},\quad
\xymatrix{HH\sub & ZH},
$$
and hence
$$
\xymatrix{A \sub & (YZ)H}.
$$
\par
Now we go towards $B$. First we have
$$
\xymatrix{AH^{-1}=A\nfold{S}{3}\lfb{8\alpha^{14}}&& A\fb{2\alpha^2} & H}
$$
which, again by folding, gives
$$
\xymatrix{A\rd{\alpha_1} & H}
$$
with $\alpha_1=8\sqrt{2}\alpha^{15}$. Hence we can write
$$
\xymatrix{H \rd{\alpha_1} & A \rd{\alpha} & B^{-1}},
$$
and so
$$
\xymatrix{H\lrd{\alpha\alpha_1} && B^{-1}}.
$$
\par
In addition, we have
$$
\xymatrix{H\lfb{8\alpha^{14}}&& A\fb{\alpha^2}& B^{-1}},
$$
and therefore we get
$$
\xymatrix{
H  \lfb{8\alpha^{16}} \lcrd{\alpha\alpha_1} && B^{-1},
}
$$
from which it follows by unfolding that
$$
\xymatrix{
B^{-1}H^{-1}=B^{-1}H\lfb{32\alpha^{20}} && B^{-1}
\fb{\alpha^2} & A\fb{2\alpha^2} & H
}.
$$
\par
Once more by the Ruzsa covering lemma, we find $Y_1$ with
$$
\xymatrix{Y_1\sub &B^{-1}},\quad \xymatrix{Y_1\lfb{2^6\alpha^{24}} 
&&  1},
\quad
\xymatrix{B^{-1}\sub & Y_1HH\sub & (Y_1Z)H}.
$$
\par
Now we need only take $X=(Y_1Z\cup YZ)$, so that
$$
\xymatrix{X\fb{\gamma_1}& 1}
$$
with $\gamma_1=\gamma(64\alpha^{24}+16\alpha^{16})$, in order to
conclude. Since
$$
\gamma_1\leq 2^{28}\alpha^{104},
$$
we are done.
\end{proof}

The next result is a version of the Balog-Gowers-Szemer\'edi Lemma
found in~\cite[Th. 5.2]{tao}.

\begin{theorem}
Let $A$ and $B$ with
$$
\xymatrix{A\me{\alpha} &B}.
$$
\par
Then there exist $A_1$, $B_1$ with
$$
\xymatrix{A_1\sub & A},\quad \xymatrix{B_1\sub & B},
$$
as well as
$$
\xymatrix{A\lfb{8\sqrt{2}\alpha} && A_1},\quad
\xymatrix{B\lfb{8\alpha} && B_1},
$$
and
$$
\xymatrix{A_1\rd{\alpha_1} & B_1^{-1}}
$$
where $\alpha_1=2^{23}\alpha^9$.
\end{theorem}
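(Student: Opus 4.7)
The plan is to prove this Balog--Gowers--Szemer\'edi-type estimate by the standard ``popular products $+$ graph extraction'' strategy, tracking constants carefully. Write $N = \sqrt{|A||B|}$ and $r(g) = |\{(a,b) \in A \times B : ab = g\}|$, so the hypothesis $e(A,B) \geq 1/\alpha$ rewrites as $E(A,B) = \sum_g r(g)^2 \geq N^3/\alpha$, while $\sum_g r(g) = |A||B| = N^2$. The first step would be to isolate a popular-product set: with a threshold $t$ of order $N/\alpha$, the set $P = \{g : r(g) \geq t\}$ carries at least half the energy (the light $g$ contribute at most $t\sum_g r(g) = tN^2 \leq \tfrac12 E(A,B)$) and has size $|P| \leq N^2/t \lesssim \alpha N$. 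The bipartite graph $\Gamma \subset A \times B$ defined by $(a,b)\in\Gamma$ iff $ab\in P$ then has $\gtrsim N^2/\alpha$ edges.

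The heart of the argument is to extract subsets $A_1 \subseteq A$ and $B_1 \subseteq B$ of the advertised densities such that \emph{every} pair $(a_1, b_1) \in A_1 \times B_1$ is joined in $\Gamma$ by many length-three paths $a_1 \to b' \to a' \to b_1$. I would follow the classical two-stage Markov $+$ Cauchy--Schwarz refinement: first restrict to vertices $a\in A$ of large $\Gamma$-degree; then apply Cauchy--Schwarz to the count of length-two walks between such vertices to extract a refinement in which typical pairs have many length-two connections into $A$; and finally convert ``typical'' into ``every'' by discarding the few bad pairs at a constant cost. Each of these three steps loses a controlled factor of $\alpha$, producing the explicit density bounds $|A_1| \geq |A|/(8\sqrt{2}\alpha)$ and $|B_1| \geq |B|/(8\alpha)$.

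Once this structure is in place, the Ruzsa bound is almost formal. Each length-three path yields a factorization
\[
a_1 b_1 = (a_1 b')(a' b')^{-1}(a' b_1),
\]
all three factors in $P$. Thus every element of $A_1 B_1$ admits at least $\mu$ representations as a product from $P\cdot P^{-1}\cdot P$, where $\mu$ is the guaranteed path count per pair, so
\[
|A_1 B_1| \leq \frac{|P|^3}{\mu} \lesssim \frac{\alpha^3 N^3}{\mu}.
\]
Combining the lower bound on $\mu$ coming from the extraction step with the estimate $\sqrt{|A_1||B_1|} \gtrsim N/\alpha$ from the density bounds gives $|A_1 B_1| \leq \alpha_1 \sqrt{|A_1||B_1|}$ with $\alpha_1 = 2^{23}\alpha^9$, which is exactly the Ruzsa-distance arrow from $A_1$ to $B_1^{-1}$ requested in the statement.

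The main obstacle is the simultaneous calibration of three thresholds: the popularity cutoff $t$, the degree cutoff in the first Markov step, and the path-multiplicity cutoff in the extraction. They must be set so that the density losses exactly meet the bounds $8\sqrt{2}\alpha$ and $8\alpha$, while the accumulated powers of $\alpha$ from each Cauchy--Schwarz and each Markov step sum to the exponent $9$ in $\alpha_1$ (and the prefactor $2^{23}$). This is precisely the quantitative bookkeeping that occupies Tao's treatment in \cite[Th.~5.2]{tao}; conceptually nothing is new, with all products and inverses simply ordered consistently to accommodate the nonabelian setting.
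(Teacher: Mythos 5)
Your outline follows exactly the route the paper relies on: this statement is Tao's noncommutative Balog--Gowers--Szemer\'edi theorem \cite[Th.~5.2]{tao}, proved by passing to the popular-product set $P$, forming the bipartite graph $\{(a,b): ab\in P\}$, refining to $A_1$, $B_1$ so that every pair is joined by many length-three paths, and then bounding $|A_1B_1|$ via the factorization $a_1b_1=(a_1b')(a'b')^{-1}(a'b_1)$ with all factors in $P\cup P^{-1}$. The paper itself gives no more detail than a pointer to Tao with the remark that ``only the last two or three inequalities in the proof need to be made explicit,'' so structurally you and the paper are doing the same thing, and your factorization and the final step $|A_1B_1|\leq |P|^3/\mu \leq \alpha_1\sqrt{|A_1||B_1|}$ are the right skeleton.

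The gap is that the sole content of this statement beyond Tao's qualitative theorem is the triple of explicit constants $8\sqrt{2}\,\alpha$, $8\alpha$ and $\alpha_1=2^{23}\alpha^9$, and your proposal never derives them: you state that the thresholds ``must be set so that the density losses exactly meet the bounds'' and that the powers of $\alpha$ ``sum to the exponent $9$,'' which is working backwards from the answer rather than producing it. In a paper whose entire purpose is explicit constants, this is the step that must be carried out: one has to fix the popularity cutoff (e.g.\ $t=\tfrac{1}{2}\alpha^{-1}\sqrt{|A||B|}$, giving $|P|\leq 2\alpha\sqrt{|A||B|}$ and at least $\tfrac{1}{2}\alpha^{-1}|A||B|$ edges), then run the two Cauchy--Schwarz/Markov refinements with explicit dyadic losses to get the stated densities and an explicit lower bound for the path count $\mu$, and only then does $|P|^3/\mu$ produce a specific power of $\alpha$ and of $2$. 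Without that computation there is no way to check from your text that the exponent is $9$ rather than, say, $8$ or $10$, nor that the prefactor is $2^{23}$; since the downstream constants of Theorem~\ref{th-me-approximate} (hence $\beta_1,\dots,\beta_4$ in Theorem~\ref{th-large-energy} and ultimately the spectral gap) are polynomial in $\alpha_1$, this verification cannot be waved away.
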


This is not entirely spelled out in~\cite{tao}, but only the last two
or three inequalities in the proof need to be made explicit to obtain
this value of $\alpha_1$. Finally, the next theorem is just the
``diagrammatic'' version of Theorem~\ref{th-large-energy}, and
therefore completes its proof. It is an explicit version
of~\cite[Th. 5.4; (i) implies (iv)]{tao}.

\begin{theorem}\label{th-me-approximate}
Let $A$ and $B$ with
$$
\xymatrix{A\me{\alpha} &B}.
$$
\par
Then there exist a $\beta$-approximate subgroup $H$ and $x$, $y\in G$,
such that
$$
\xymatrix{H\fb{\beta_2} & A},\quad
\xymatrix{A\fb{\beta_1} & A\cap xH},\quad
\xymatrix{B\fb{\beta_1} & B\cap Hy},
$$
where
$$
\beta\leq 2^{1861}\alpha^{720},\quad\quad
\beta_1\leq 2^{2424}\alpha^{937},\quad\quad
\beta_2\leq 2^{325}\alpha^{126}.
$$
\par
Moreover, one can ensure that
$$
\xymatrix{\nfold{H}{3}\fb{\beta_3} & H}
$$
where $\beta_3=2^{930}\alpha^{360}$.
\end{theorem}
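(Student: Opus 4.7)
The plan is to chain together the two preceding theorems --- the Balog-Gowers-Szemer\'edi lemma and Theorem~\ref{th-46} --- via a pigeonhole step, and then to propagate the constants. The mathematical content is entirely carried by those two results; the only work is bookkeeping the exponents.

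First I would apply the Balog-Gowers-Szemer\'edi lemma to the energy hypothesis $e(A,B)\geq 1/\alpha$. This yields subsets $A_1\subset A$ and $B_1\subset B$ with $|A|\leq 8\sqrt{2}\alpha\,|A_1|$ and $|B|\leq 8\alpha\,|B_1|$, satisfying the small-Ruzsa-distance bound $|A_1 B_1|\leq \alpha_1\sqrt{|A_1||B_1|}$ with $\alpha_1=2^{23}\alpha^9$. Next I would feed the pair $(A_1,B_1)$ into Theorem~\ref{th-46}, which produces a $\gamma$-approximate subgroup $H$ and a set $X\subset G$ with $|X|\leq\gamma_1$ such that $A_1\subset XH$, $B_1\subset HX$, together with the size bound $|H|\leq\gamma_2|A_1|\leq\gamma_2|A|$ and the tripling estimate $|\nfold{H}{3}|\leq 2^{10}\alpha_1^{40}|H|$, where $\gamma\leq 2^{21}\alpha_1^{80}$, $\gamma_1\leq 2^{28}\alpha_1^{104}$, and $\gamma_2\leq 8\alpha_1^{14}$. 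Substituting $\alpha_1=2^{23}\alpha^9$ immediately yields the claimed values $\beta\leq 2^{1861}\alpha^{720}$, $\beta_2\leq 2^{325}\alpha^{126}$, and $\beta_3\leq 2^{930}\alpha^{360}$.

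For the coset-intersection assertions, I would apply pigeonhole to the covers $A_1\subset\bigcup_{x\in X}xH$ and $B_1\subset\bigcup_{y\in X}Hy$, producing elements $x,y\in X\subset G$ with $|A_1\cap xH|\geq|A_1|/|X|$ and $|B_1\cap Hy|\geq|B_1|/|X|$. Combining this with the Balog-Gowers-Szemer\'edi size estimates yields
$$
|A\cap xH|\geq|A_1\cap xH|\geq\frac{|A_1|}{|X|}\geq\frac{|A|}{8\sqrt{2}\alpha\gamma_1},
$$
and symmetrically $|B\cap Hy|\geq|B|/(8\alpha\gamma_1)$; setting $\beta_1=8\sqrt{2}\alpha\gamma_1$ and plugging in $\alpha_1=2^{23}\alpha^9$ gives $\beta_1\leq 2^{2423.5}\alpha^{937}\leq 2^{2424}\alpha^{937}$, matching the claim. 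The only obstacle is purely clerical --- accurately propagating the numerical prefactors and exponents through two compositions and one pigeonhole --- rather than conceptual; conceptually, everything is already packaged in the two cited theorems.
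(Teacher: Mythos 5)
Your proposal is correct and follows essentially the same route as the paper: Balog--Gowers--Szemer\'edi to pass to $A_1\subset A$, $B_1\subset B$ with small Ruzsa distance, then Theorem~\ref{th-46} applied to $(A_1,B_1)$, then pigeonhole over the covering set $X$, with the same constant propagation ($\beta=2^{21}\alpha_1^{80}$, $\beta_2=8\alpha_1^{14}$, $\beta_3=2^{10}\alpha_1^{40}$, $\beta_1=8\sqrt{2}\alpha\cdot 2^{28}\alpha_1^{104}$ for $\alpha_1=2^{23}\alpha^9$). All the numerical values check out against the paper's.
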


\begin{proof}
  By the Balog-Gowers-Szemer\'edi Theorem, we get $A_1$, $B_1$ with
$$
\xymatrix{A_1\sub & A},\quad \xymatrix{B_1\sub & B},
$$
as well as
$$
\xymatrix{A\lfb{8\sqrt{2}\alpha} && A_1},\quad
\xymatrix{B\lfb{8\alpha} && B_1},
$$
and
$$
\xymatrix{A_1\rd{\alpha_1} & B_1^{-1}}
$$
where $\alpha_1=2^{23}\alpha^9$. Applying Theorem~\ref{th-46} to $A_1$
and $B_1$, we get a $\beta$-approximate subgroup $H$ and a set $X$
with
$$
\xymatrix{H\lfb{8\alpha_1^{14}} && A_1\fb{1} & A}
$$
and
$$
\xymatrix{X\fb{\gamma} & 1},\quad \xymatrix{A_1\sub & XH},\quad
\xymatrix{B_1\sub & HX},
$$
where
$$
\beta=2^{21}\alpha_1^{80}=2^{1861}\alpha^{720},\quad\quad
\gamma=2^{28}\alpha_1^{104}=2^{2420}\alpha^{936},
$$
and moreover
$$
\xymatrix{\nfold{H}{3}\fb{\beta_3} & H}
$$
where $\beta_3=2^{10}\alpha_1^{40}=2^{930}\alpha^{360}$.
\par
Applying the pigeonhole principle, we find $x$ such that
$$
\xymatrix{A\lfb{8\sqrt{2}\alpha} && A_1\fb{\gamma} & A_1\cap xH\sub &
  A\cap xH}
$$
and $y$ with
$$
\xymatrix{B\lfb{8\alpha} && B_1\fb{\gamma} & B_1\cap Hy\sub & B\cap
  Hy}.
$$
\par
This gives what we want with
$$
\beta_1\leq 8\sqrt{2}\alpha\gamma
\leq 2^{2424}\alpha^{937},\quad\quad
\beta_2=8\alpha_1^{14}=2^{325}\alpha^{126}.
$$
\end{proof}


\end{document}